\newtheorem{thm}{Theorem}[section]
\newtheorem{prop}[thm]{Proposition}
\newtheorem{lem}[thm]{Lemma}
\theoremstyle{definition}
\newtheorem{defn}[thm]{Definition}
\newtheorem{rem}[thm]{Remark}
\newcommand{\be}{\begin{equation}}
\newcommand{\ee}{\end{equation}}
\newcommand{\R}{\mathbb{R}}
\newcommand{\N}{\mathbb{N}}
\newcommand{\E}{\mathbb{E}}
\newcommand{\p}{\mathbb{P}}
\def \eps {{ \varepsilon }}
\def \o {{  {\mathcal{O}} }}
\def \calf {{  {\mathcal{F}} }}
\def \calb {{  {\mathcal{B}}  }}
\def \caly {{  {\mathcal{Y}}  }}
 \def \call {{  {\mathcal{L}}  }}
 \def \calt {{  {\mathcal{T}}  }}
\begin{document}

\baselineskip=1.1 \baselineskip

\begin{center}
 {  \bf 
 Martingale Solutions of Fractional
 Stochastic 
 Reaction-Diffusion Equations
 Driven by  Superlinear Noise 
   }
\end{center}

\medskip

\medskip

\begin{center}
 Bixiang Wang  
\vspace{1mm}\\
Department of Mathematics, New Mexico Institute of Mining and
Technology \vspace{1mm}\\ Socorro,  NM~87801, USA \vspace{1mm}\\
Email: bwang@nmt.edu\vspace{2mm}\\
\end{center}
 



 \vspace{3mm}

\begin{abstract}  
  In this paper, we prove the
  existence of martingale solutions
  of a class of stochastic equations
  with   pseudo-monotone drift of polynomial
  growth of arbitrary order
  and a continuous diffusion term
  with
  superlinear growth.
  Both the nonlinear drift and diffusion terms
  are not required to be locally Lipschitz
  continuous.  We then  apply the 
  abstract result to establish the existence
  of martingale solutions of the
  fractional stochastic reaction-diffusion
  equation 
  with polynomial drift  driven by a superlinear noise.
  The pseudo-monotonicity techniques
  and the Skorokhod-Jakubowski
      representation theorem in 
      a topological space
      are used to pass to the limit of a sequence
      of approximate solutions
      defined by the Galerkin method.
   \end{abstract}

{\bf Key words.}   Martingale solution; 
pseudo-monotonicity;  superlinear noise;
Skorokhod-Jakubowski
       theorem,
 fractional equation;
stochastic reaction-diffusion equation. 

{\bf MSC 2020.}   60F10, 60H15, 37L55, 35R60.

\section{Introduction}
\setcounter{equation}{0}

In this paper, we investigate the existence of martingale
solutions of the
stochastic  fractional reaction-diffusion equation
in a bounded domain
with a  monotone drift driven by a superlinear
noise.  Both the nonlinear drift and  diffusion terms
are continuous but not necessarily locally
Lipschitz continuous.
 
Let $\o$ be a bounded domain in $\R^n$.
For every $s\in (0,1)$,
consider the  non-local It\^{o}  
stochastic equation defined in $\o$
for $t>0$:
\be\label{rde1}
  du (t)
  + (-\Delta)^s  u (t)  dt
  =  f(t,x, u(t)) dt
  +   h(t,x, u(t) )
  dt
  +\sum_{i=1}^\infty \sigma_i  (t, x,  u(t))    {dW_i} ,
  \ \ x\in \o,
  \ee 
  with  boundary condition
  \be\label{rde2} 
  u(t,x) = 0, \quad x\in \R^n\setminus \o, \ t>0,
  \ee
  and initial condition
 \be\label{rde3}
 u( 0, x ) = u_0  (x),   \quad x\in  \R^n ,
 \ee 
  where 
 $(-\Delta)^s$ is  the fractional Laplace operator,
 $f: \R \times \R^n \times \R
 \to \R$ is
 continuous  and 
  decreasing 
  in its third argument,
  $h: \R \times \R^n \times \R
 \to \R$ is a Lipschitz function
 in its third argument,
 $\sigma_i: 
  \R \times \R^n \times \R
 \to \R$ is superlinear and continuous in its
 third argument,
 and $\{W_i\}_{i=1}^\infty$
 is a sequence of  independent
 real-valued  standard Brownian motion
    defined 
 on a  complete filtered probability space
 $(\Omega, \mathcal{F}, \{ { \mathcal{F}} _t\} _{t\in \R},  \p  )$.

  The 
     fractional partial differential equations 
     have numerous applications in probability and 
   physics
  \cite{abe1,  garr1,  guan2, 
    jara1, kos1}.
    In the deterministic case,
    the  solutions of such equations
    have been studied in  
   \cite{abe1,  caff1,  dine1,  gal1, garr1, guan2, 
    jara1, kos1,  luh1,   ros1,  ser2}.
    In the stochastic case, the existence of solutions
    and their long term dynamics have been investigated
    in  
     \cite{gu1, luh2,  wan1, wan2} and the references therein.
     In particular, the existence of solutions
     to the stochastic  system
      \eqref{rde1}-\eqref{rde3}
     were established  in \cite{wan1, wan2}
     when  the nonlinear function $f(t,x, u)$ is differentiable
     or locally Lipschitz continuous in $u$,
     and $\sigma_i(t,x,u)$ is 
     locally Lipschitz continuous 
    in $u$ with linear growth.
    
    In the present paper, we study the existence of martingale
    solutions to \eqref{rde1}-\eqref{rde3}
    and improve the results of \cite{wan1, wan2}
    in the following aspects: 
    
    (i) $f(t,x, u)$ is decreasing and continuous in $u$
    with polynomial growth of arbitrary
    order, but
    not necessarily locally Lipschitz continuous in $u$.
    
     (ii) $\sigma_i(t,x,u)$ has a superlinear growth
    instead of a linear growth  in $u$.
    
    (iii) $\sigma_i(t,x,u)$ is  only continuous in $u$, but
    not necessarily locally Lipschitz continuous in $u$.

    Actually, we will prove an abstract
    result
    (see Theorems \ref{main} and   \ref{main1})
     on the existence of martingale
    solutions for  stochastic
    partial differential equations
    with  pseudo-monotone  drift of polynomial 
    type 
    of any order,  which is applicable
   to  a large class of stochastic systems
    including the fractional
    reaction-diffusion equation
    \eqref{rde1}-\eqref{rde3},
    the fractional  $p$-Laplace equation
    and the tamed  
    Navier-Stokes equation
    with a polynomial drift of arbitrary order.
    We remark that
    Theorem \ref{main} in the present
    paper is an extension of
    Theorem 2.6 in \cite{roc1} which
    applies to 
     \eqref{rde1}-\eqref{rde3}
     only when
     $\sigma_i(t,x,u)$ has a linear growth
      in $u$ and 
    $f(t,x, u)$  has a restricted growth
    in  $u$
    (that is, the growth of
     $f(t,x, u) $
    in  $u$
    cannot be arbitrarily large).
    
    We mention that the nonlinear diffusion term
    in Theorem \ref{main}  does not depend
    on  spatial derivatives of solutions,
     and
    has  a superlinear but subcritical growth.
    In a forthcoming paper, 
     we will investigate  the case
    where   the growth of noise is critical   and the 
    diffusion      depends on
      derivatives of solutions 
    (i.e., transport noise).

    We will prove the existence of
    martingale solutions for
    \eqref{rde1}-\eqref{rde3}
    when the initial  data
    $u_0$ belong
     to 
    the space
    $H
    =\{ u\in L^2(\R^n):
    u=0 \text{ a e. on } \R^n \setminus \o
    \}$.
    We also need the space
    $V_1 =\{ u\in H^s (\R^n):
    u=0 \text{ a e. on } \R^n \setminus \o
    \}$ for the fractional Laplace
    operator $(-\Delta)^s$,
    and 
     $V_2 =\{ u\in L^p  (\R^n):
    u=0 \text{ a e. on } \R^n \setminus \o
    \}$ for the polynomial nonlinearity
    $f$.
    If the growth of  $f$ is restricted
    such that $V_1$ is embedded into
    $V_2$, then we may only consider
    the spaces $H$ and $V_1$ instead
    of $H, V_1$ and $V_2$
  as  in \cite{roc1}.
  Since the nonlinear term $f$
  has a polynomial growth
  of arbitrary order in this paper,
  we must  use the space 
  $V_1$ as well as  $V_2$.
  That is the reason why we deal with
  a finite number of 
  Banach spaces
  in Theorem \ref{main}
    to accommodate
    a variety of   nonlinearity
    in  a  given  stochastic differential equation.
    
    The proof of Theorem \ref{main}
    follows the general pseudo-monotone
     argument
    of \cite{roc1}, but  with necessary modifications.
    In particular,  we need to consider
    the tightness of a sequence of approximate
    solutions in the space
    $L^\infty(0,T; H)$ with
    weak-* topology
    as well as the space
    $L^{q_j}(0,T; V_j)$
    with weak topology
    for some $q_j> 1$
    with $j=1,\cdots J$.
    Since these spaces are not metric spaces,
    we cannot apply the classical
    Skorokhod representation theorem in a metric
    space as in \cite{roc1}.
    Instead,   we must employ the
     Skorokhod-Jakubowski
      representation theorem in 
      a topological space
      like 
       $L^\infty(0,T; H)$ with
    weak-* topology
    and  $L^{q_j}(0,T; V_j)$
    with weak topology.
    
    We remark that
    the authors of \cite{roc1} used the
   {\it  strong}  Skorokhod  representation theorem in 
    a metric space to obtain the existence of martingale solutions.
    Unfortunately, as reported recently in \cite{ond1}
    (see also \cite{pec1}),
    the  strong Skorokhod  representation theorem
    is incorrect even in a Polish space.
    Therefore, 
  in the present paper, we cannot use the strong
  representation theorem, instead 
    we have to employ
    the standard 
    Skorokhod-Jakubowski
      representation theorem in 
      a topological space.
       In addition, the proof of this paper for the 
      tightness
      of  approximate solutions
      in $L^r(0,T; H)$ is different from
      that of \cite{roc1}.

    Based on Theorem \ref{main},
    we then prove the existence of
    martingale solutions
    of \eqref{rde1}-\eqref{rde3}
    when $f$ is monotonically
    decreasing with a polynomial
    growth of  arbitrary order $p-1$
    with $p>2$
    and $\sigma_i$ is
    a  continuous function
    with a polynomial growth
    of order less than ${\frac 12}p$
    (see \eqref{f1}-\eqref{f3} and
    \eqref{sig1}-\eqref{sig4}
    for
    the  precise assumptions
    on $f$ and $\sigma_i$).
    Under these assumptions,
    we  will  verify
    that  $f$ and $\sigma_i$
    satisfy all conditions of
    Theorem \ref{main}
    including the necessary continuity
    of the noise coefficients
    in appropriate spaces
    (see Theorem \ref{main_rde}).
    
    It is worth mentioning that
    the solutions of stochastic
    differential equations
    with monotone nonlinearity
    has been extensively studied in the
    literature, see, e,g,
    \cite{gol1, kry1, liu1, liu2, mar1, ngu1,
    par1,  roc1, sal1, sal2, 
    val1,
    wanr1,  
    zha1} and the references therein.

     In the next section, we prove 
     the existence of martingale solutions
     for a general stochastic equation
     with monotone nonlinearity,
     and in the last section, we  show
      the existence of martingale solutions
     to the fractional stochastic system
   \eqref{rde1}-\eqref{rde3}
     when  $f$ is a decreasing
     function with polynomial growth
     of any order 
     and $\sigma_i$ is a continuous function
     with superlinear growth.

\section{Martingale solutions of an abstract equation}
\setcounter{equation}{0}

           In this section, we prove the existence
           of martingale solutions
           for a general stochastic equation
           by the Galerkin method.
           We first discuss
           the  assumptions on
           the nonlinearity and present
           the main  result.
           We then
              derive the uniform
           estimates of a sequence of 
           approximate solutions
           and  establish the tightness
           of distributions of these solutions.
           We finally  pass to the limit of
           the approximate solutions
           by 
           the monotone argument
           as in \cite{roc1}.

\subsection{Assumptions and main results}

      Let $H$ be a separable Hilbert space with norm
      $\| \cdot \|_H$ and inner product $(\cdot, \cdot)_H$. 
      Let $J\in \N$ be fixed and for each $j=1,2\cdots J$,
      $V_j$ be a separable  reflexive Banach space
      with norm $\| \cdot \|_{V_j}$.
       Suppose
       $H$ and $V_j$ 
  for all $ 1 \le  j \le J$ are continuously embedded in a Hausdorff topological vector space  $\mathbb{X}$.
      Denote by
      $V=\bigcap_{j=1}^J V_j$ with norm
      $\|\cdot \|_V
      =\sum_{j=1}^J \|\cdot \|_{V_j}$. 
      Then $V$ is a separable Banach space.
      We further assume that $V$ is  reflexive.
      Let $H^*$, $V_j^*$ and $V^*$ be the dual spaces of
      $H$, $V_j$ and $V$
      with  
      duality pairings
      $(\cdot, \cdot)_{(H^*, H)}$,
      $(\cdot, \cdot)_{(V_j^*, V_j)}$,
      and
      $(\cdot, \cdot)_{(V^*, V)}$, respectively.
       
       We   assume that
       $V$ is densely embedded into $H$. Then 
       we have the Gelfand  triple:
       $$ V \subseteq H \equiv H^* \subseteq  V^*,
       $$
       where  $H^*$ is identified with $H$
       by the Riesz  representation thorem.
         If $f\in H$ and $x\in V$, then
       $$
       (f,x)_{(V^*, V)} 
       = (f,x)_{H}.
       $$
       
   If $f \in  V_j^*$, then
   the restriction of $f$ to $V$ belongs to $V^*$,
   and hence $V_j^*$ can be considered as a 
   subspace of $V^*$ for every 
        $j=1,2, \cdots,  J$.
       Let   $\sum_{j=1}^J V_j^*$ be the  space endowed 
       with norm
       $$
       \| f \|_{\sum_{j=1}^J V_j^*}
       =\inf \left  \{
       \sum_{j=1}^J \|f_j\|_{V_j^*},
       \  f=\sum_{j=1}^J f_j , \ f_j \in V_j^* \ \text{ for } \ j=1,2, \cdots,
       J
       \right \},
       \quad \forall \ f\in \sum_{j=1}^J V_j^*.
       $$
       Then
        $\sum_{j=1}^J V_j^*$ is embedded into   $V^*$.
        If $v\in V$ and $f=\sum_{j=1}^J f_j \in \sum_{j=1}^J V_j^*
        \subseteq V^*$ with $f_j\in V_j^*$, then
        $$
        (f, v)_{(V^*, V)}
        =\sum_{j=1}^J
        (f_j, v)_{(V_j^*, V_j)}.
        $$

Let $(\Omega, \calf, (\calf_t)_{t\in [0,T]}, \p)$ be
a complete filtered probability space
satisfying the usual condition.
Let $U$ be a separable Hilbert space
and $I$ is the identity operator on $U$.
Suppose $(W(t), t\in [0,T])$ is a cylindrical
$I$-Wiener process in $U$  defined on
$(\Omega, \calf, (\calf_t)_{t\in [0,T]}, \p)$, which means
that there exists another separable Hilbert space
$U_0$ such the embedding $U \subseteq U_0$ is
Hilbert-Schmidt and $W$ takes values in $U_0$.
Denote by $\call_2 (U, H)$ the Hilbert space
of all Hilbert-Schmidt operators from $U$ to $H$
with norm $\| \cdot \|_{\call_2(U,H)}$.

For each $j=1,2,\cdots, J$, let
$A_j: [0,T] \times   V_j
\to V_j^*$ be a 
$(\calb ([0,T]) \times  \calb (V_j) , \calb (V_j^*))$-measurable
mapping, and    
  $A: [0,T]   \times V
\to V^*$ be the  mapping given by:
$$
A(t,   v)
=\sum_{j=1}^J A_j (t,   v),
\quad \forall \ t\in [0,T],
\ v\in V.
$$

Let $B:
 [0,T] \times V
\to \call_2(U,H)$ be a 
 $(\calb ([0,T]) \times \calb(V) , \calb (
\call_2(U,H)))$-measurable 
 mapping.
 Consider the following assumptions:

{\bf (H1)} (Hemicontinuity)
  For every $t\in [0,T]$ 
   and
  $v_1, v_2, v_3 \in V$,
  the mapping $(A(t,  v_1 +\lambda v_2),
   v_3)_{(V^*, V)} $ is continuous with respect to $\lambda \in \R$.

{\bf (H2)} (Local monotonicity)  For every 
$t\in [0,T]$ 
   and
  $u, v  \in V$,
  $$
  2(A(t,   u)
  - A(t,  v),
  \ u-v)_{(V^*, V)}
  + \| B(t,   u) -B(t,  v)^2_{\call_2(U,H)}
  $$
  \be
  \label{h2a}
  \le
  \left (
  g (t ) +  \varphi (u) + \psi (v)
  \right ) \| u-v\|^2_H,
\ee
  where $g \in L^1([0,T] )$, 
  and $\varphi, \psi: V \to \R$ are measurable  functions such that
  for all $u\in V$,
\be\label{h2b}
   |\varphi (u)|  + | \psi (u)|
   \le
   \alpha_1
   \left (
   1  
   +\sum_{j=1}^J  \|u \|_{V_j}^{q_j}
   \right ) \left ( 1+ \|u \|_H^\alpha
   \right ),
\ee
  where $\alpha_1 \ge 0 $, $\alpha \ge 0$, 
  and $q_j \in (1, \infty)$   are constants
  for all   $j=1,2,\cdots, J.$

  We mention that
   {\bf (H2)} is  needed only for the
   pathwise uniqueness of
   martingale  solutions, but not
   for the existence of martingale  solutions.
   As we will  see later, 
    the  following
   generalized local monotone 
   condition   is sufficient for the
   existence of martingale solutions
   of   stochastic equations.
  
  {\bf (H2)}$^\prime$ (General local monotonicity)  For every 
  $R>0$, there exists $h_R\in L^1([0,T]
  )$ such that for all
$t\in [0,T]$ 
   and
  $u, v  \in V$ with $\|u\|_V \le R$
  and $\| v\|_V \le R$,
 \be\label{h2c}
  2(A(t , u)
  - A(t , v),
  \ u-v)_{(V^*, V)} 
  \le 
  h_R (t )     \| u-v\|^2_H.
\ee

  {\bf (H3)} (Coercivity)
  For every $t\in [0,T]$ 
   and
  $ v \in V$,
  \be\label{h3a}
  2 (A(t , v),
   v)_{(V^*, V)}  
   + \|B(t, v)\|^2_{\call_2(U,H)}
   \le
   - \alpha_2 \sum_{j=1}^J \| v \|^{q_j}_{V_j}
   +
   g  (t ) (1 +    \| v \|^2_H)  ,
   \ee
   where 
   $\alpha_2>0$ is a constant.

   {\bf (H4)} (Growth of drift  terms)
  For every $j=1,2, \cdots, J$,  $t\in [0,T]$ 
   and
  $ v \in V$,
  \be\label{h4a}
   \| A_j (t,   v) \|^{\frac {q_j}{q_j -1}}_{V_j^*}
   \le   
    \alpha_3 \|v\|^{q_j}_{V_j} (1 +  \| v \|^\beta _H )
   +
    g  (t ) (1+   \| v \|^\beta _H),
   \ee
   where 
   $\alpha_3$ and $\beta$ are  
   nonnegative  constants.

    {\bf (H5)} (Growth of diffusion term)
  For every    $t\in [0,T]$ 
   and
  $ v \in V$,
  \be\label{h5a}
   \| B(t,   v) \|^2_{\call_2(U,H)}
   \le       \alpha_4 \sum_{j=1}^J \| v \|^{\hat{q}_j}_{V_j}
   +
    g  (t) (1+     \| v \|^2 _H),
   \ee
   where
   $1\le \hat{q}_j <q_j$
   and  $\alpha_4 \ge 0$  are  constants.
   In addition, if $v, v_n \in V$ such that
   $\{v_n\}_{n=1}^\infty$ is bounded in $V$ and 
    $v_n \to v$ in $H$
   as $n\to \infty$, then for all $t\in [0,T]$,
   \be\label{h5b}
  \lim_{n\to \infty}
   B(t,   v_n) =  B(t,  v)
   \ \text{ in }  \ \call_2 (U,H).
\ee
Furthermore, we assume that
if $v\in L^\infty(0,T; H)
\bigcap  (
\bigcap\limits_{1\le j\le J} L^{q_j}
(0,T; V_j)
  )$
  and 
  $\{v_n\}_{n=1}^\infty
 $ is a bounded sequence in
 $ L^\infty(0,T; H)
\bigcap  (
\bigcap\limits_{1\le j\le J} L^{q_j}
(0,T; V_j)
  )$
  such that
  $v_n  \to v$ in $L^1(0,T; H)$,
  then
  \be\label{h5c}
   \lim_{n\to \infty}
   B(\cdot ,   v_n) =  B(\cdot,  v)
   \ \text{ in }  \  L^2(0,T; \call_2 (U,H)).
\ee

For convenience,  we  set
\be\label{qj}
 {\tilde{q}} =\max_{1\le j\le J} q_j \  \ \text{and} \ \ 
{\underline{q}} =\min_ {1\le j\le J} q_j . \  \ 
\ee

Note that condition \eqref{h5a} indicates that
$B(t,v)$ may have  superlinear growth 
in $v$ if $\hat{q}_j>2$ for   some $j=1,2,\cdots, J$.
In addition, condition \eqref{h5b}-\eqref{h5c}
require  that $B(t,v)$ is  only  continuous in $v$,
 but not necessarily  locally Lipschitz continuous in $v$.

By   {\bf (H1)}, {\bf (H2)}$^\prime$
and {\bf (H4)}  one may verify that  the 
operator   $A(t,  \cdot): V\to V^*$
is 
 demicontinuous  
 for every $t\in [0,T]$  in the sense that
if $v_n \to v$ in $V$, then 
$A(t,  v_n) \to A (t,  v)$
weakly in $V^*$
 (see, e.g,  \cite[ Lemma 2.1, p. 1252]{kry1}
 or    \cite[Remark 4.4.1]{liu1}).
  Furthermore,  
   by    Lemma 2.15 in \cite{roc1},
 we know that if 
   the embedding  $V\subseteq H$
   is compact, then  for every 
   $t\in [0,T]$,  $A(t, \cdot): V\to V^*$
   is pseudo-monotone 
   which is  stated below.

   \begin{lem}[\cite{roc1}] 
   \label{pmon1}
   If {\bf (H1)} and {\bf (H2)}$^\prime$ hold and
   the embedding  $V\subseteq H$
   is compact, then  for every 
   $t\in [0,T]$,  $A(t, \cdot): V\to V^*$
   is pseudo-monotone in the sense that 
   if $v_n \to v$ weakly in $V$ and
   $$
   \liminf_{n\to \infty}
  ( A(t, v_n), v_n -v)_{(V^*, V)} \ge 0,
  $$
  then  for any $u\in V$,
    $$
   \limsup_{n\to \infty}
  ( A(t, v_n), v_n - u )_{(V^*, V)}  
  \le
   ( A(t, v), v - u )_{(V^*, V)} .
  $$
   \end{lem}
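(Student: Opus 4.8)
The plan is to establish pseudo-monotonicity by combining the demicontinuity of $A(t,\cdot)$ (which follows from {\bf (H1)}, {\bf (H2)}$^\prime$ and {\bf (H4)} as noted in the excerpt) with the compactness of the embedding $V\subseteq H$. The key observation is that weak convergence $v_n\rightharpoonup v$ in $V$, together with compact embedding, upgrades to \emph{strong} convergence $v_n\to v$ in $H$. This strong $H$-convergence is exactly what the local monotonicity inequality {\bf (H2)}$^\prime$ can exploit, since its right-hand side controls the monotonicity defect by $\|u-v\|_H^2$.

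**First I would** record that $\{v_n\}$ is bounded in $V$ (weak convergence implies boundedness), so there is $R>0$ with $\|v_n\|_V\le R$ and $\|v\|_V\le R$; fix the corresponding $h_R\in L^1([0,T])$. Applying {\bf (H2)}$^\prime$ with $u=v_n$ gives
$$
2\bigl(A(t,v_n)-A(t,v),\,v_n-v\bigr)_{(V^*,V)}\le h_R(t)\,\|v_n-v\|_H^2.
$$
Because $V\subseteq H$ is compact and $v_n\rightharpoonup v$ weakly in $V$, we have $v_n\to v$ strongly in $H$, so the right-hand side tends to $0$. Combined with the hypothesis $\liminf_n (A(t,v_n),v_n-v)_{(V^*,V)}\ge 0$, this forces
$$
\lim_{n\to\infty}\bigl(A(t,v_n),\,v_n-v\bigr)_{(V^*,V)}=0,
$$
since the term $(A(t,v),v_n-v)_{(V^*,V)}\to 0$ by weak convergence and $A(t,v)\in V^*$ being fixed.

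**Next I would** use demicontinuity to pass to the limit in the target expression. For arbitrary $u\in V$, write
$$
\bigl(A(t,v_n),\,v_n-u\bigr)_{(V^*,V)}
=\bigl(A(t,v_n),\,v_n-v\bigr)_{(V^*,V)}
+\bigl(A(t,v_n),\,v-u\bigr)_{(V^*,V)}.
$$
The first term tends to $0$ by the previous step. For the second, demicontinuity (which the excerpt establishes under exactly {\bf (H1)}, {\bf (H2)}$^\prime$, {\bf (H4)}) gives $A(t,v_n)\rightharpoonup A(t,v)$ weakly in $V^*$, and since $v-u\in V$ is a fixed test element, $(A(t,v_n),v-u)_{(V^*,V)}\to (A(t,v),v-u)_{(V^*,V)}$. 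Adding these yields
$$
\lim_{n\to\infty}\bigl(A(t,v_n),\,v_n-u\bigr)_{(V^*,V)}
=\bigl(A(t,v),\,v-u\bigr)_{(V^*,V)},
$$
which is in fact stronger than the required $\limsup$ inequality (it is a genuine limit), completing the argument.

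**The main obstacle** I expect is verifying demicontinuity cleanly, i.e.\ that $A(t,v_n)\rightharpoonup A(t,v)$ weakly in $V^*$ whenever $v_n\to v$ strongly in $V$—but the excerpt explicitly grants this via \cite[Lemma 2.1]{kry1} or \cite[Remark 4.4.1]{liu1}, so I would simply invoke it rather than reprove it. The one subtlety worth care is that demicontinuity is a statement about \emph{strong} $V$-convergence, whereas here I only have weak $V$-convergence of $v_n$; the resolution is that I never apply demicontinuity to $v_n$ directly, only the growth bound {\bf (H4)} (to get a uniform $V^*$-bound on $A(t,v_n)$, hence a weakly convergent subsequence) together with the identification of the weak limit via the monotonicity trick above. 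Since the statement is attributed to \cite{roc1}, the cleanest presentation is to reduce everything to the strong $H$-convergence furnished by compactness and then quote the demicontinuity and growth lemmas as black boxes.
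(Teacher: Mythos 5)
The paper itself gives no proof of this lemma: it is quoted verbatim from Lemma 2.15 of \cite{roc1}, so your attempt can only be judged on its own terms. Your opening step is correct and is indeed the standard one: weak convergence gives boundedness of $\{v_n\}$ in $V$, the compact embedding $V\subseteq H$ upgrades $v_n\rightharpoonup v$ in $V$ to $v_n\to v$ in $H$, and {\bf (H2)}$^\prime$ together with $(A(t,v),v_n-v)_{(V^*,V)}\to 0$ yields $\limsup_n (A(t,v_n),v_n-v)_{(V^*,V)}\le 0$, hence, combined with the hypothesis, $\lim_n (A(t,v_n),v_n-v)_{(V^*,V)}=0$.

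The second half, however, has a genuine gap. As written you invoke demicontinuity to conclude $A(t,v_n)\rightharpoonup A(t,v)$ in $V^*$, but demicontinuity converts \emph{strong} $V$-convergence into weak $V^*$-convergence, and here $v_n$ converges only weakly in $V$. Your closing paragraph acknowledges this and proposes instead to extract a weakly convergent subsequence of $A(t,v_n)$ via {\bf (H4)} and to ``identify the weak limit via the monotonicity trick above'' --- but nothing above actually identifies that limit, and the identification is exactly where the hemicontinuity {\bf (H1)} (an explicit hypothesis that your argument never uses) must enter through a Minty-type computation; moreover {\bf (H4)} is not among the hypotheses of the lemma, so the $V^*$-boundedness of $A(t,v_n)$ is not available. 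The repair is to run Minty's trick directly at the level of the $\limsup$: apply {\bf (H2)}$^\prime$ to the pair $(v_n, z_\lambda)$ with $z_\lambda=v+\lambda(u-v)$, $\lambda\in(0,1)$, note $v_n-z_\lambda=(v_n-v)+\lambda(v-u)$, pass to the limit in $n$ using $\lim_n(A(t,v_n),v_n-v)_{(V^*,V)}=0$, $v_n\to v$ in $H$ and $v_n\rightharpoonup v$ in $V$, divide by $\lambda>0$, and then let $\lambda\downarrow 0$ using {\bf (H1)}; this gives $\limsup_n (A(t,v_n),v-u)_{(V^*,V)}\le (A(t,v),v-u)_{(V^*,V)}$, which added to the first-half limit yields the claim. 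This route uses only {\bf (H1)}, {\bf (H2)}$^\prime$ and the compact embedding, produces the stated $\limsup$ inequality (rather than the genuine limit you assert, which these hypotheses cannot deliver), and requires no weak compactness of $\{A(t,v_n)\}$ in $V^*$.
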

   


Consider the stochastic equation:
\be\label{sde1}
dX(t)
= A(t, X(t)) dt
+B(t,  X(t)) d W(t), \quad t\in (0, T],
\ee
with initial condition:
\be\label{sde2}
X(0) =x \in H.
\ee

The solution of \eqref{sde1}-\eqref{sde2}
is understood in the following sense.

\begin{defn}\label{dsol}
A continuous  $H$-valued 
$\calf_t$-adapted stochastic process
$(X(t): t\in [0,T])$ is called a solution
of problem \eqref{sde1}-\eqref{sde2}
if
$$
X\in L^2(\Omega, L^2(0,T; H))
\bigcap L^{q_j} (\Omega, L^{q_j}(0,T; V_j)),
\quad \forall \  j=1,2, \cdots, J,
$$ 
and for all $t\in [0,T]$,
$$
X(t) = x
+ \int_0^t A(s, X(s)) ds
+\int_0^t
B(s, X(s)) dW(s) \quad \text{in} \ \ V^*,
$$
$\p$-almost surely.
 \end{defn}

The main result of the paper is stated below.
 
 \begin{thm}\label{main}
 Suppose {\bf (H1)}, {\bf (H2)}$^\prime$
and {\bf (H3)}-{\bf (H5)} are fulfilled.
If  the embedding $V \subseteq H$ is compact,
then for any   $p \ge 1$  
and 
  $x\in H$,  \eqref{sde1}-\eqref{sde2}
has at least one martingale solution such that
\be\label{main 1}
\E \left (
\sup_{t\in [0,T]}
\|X(t)\|_H^{2p}
\right )
+
\E 
\left (
\left (
\sum_{j=1}^J \int_0^T
\| X(s)\|_{V_j}^{q_j}
ds
\right )^{p}
\right )
\le M(1 +\|x\|_H^{2p}),
\ee
where $M=M(T, p)>0$ is a constant
depending only on $T$ and $p$.

In addition, if {\bf (H2)}  is fulfilled,  then the
pathwise uniqueness of solutions  holds and
thus  \eqref{sde1}-\eqref{sde2}
has a unique solution in the sense of
Definition \ref{dsol}.
 \end{thm}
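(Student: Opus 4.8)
The plan is to construct a martingale solution by the Galerkin method and then pass to the limit using compactness together with the pseudo-monotone structure. First I would fix a Galerkin basis $\{e_k\}_{k=1}^\infty \subseteq V$ that is orthonormal in $H$, set $H_n = \mathrm{span}\{e_1,\dots,e_n\}$ with $H$-orthogonal projection $P_n$, and project \eqref{sde1}-\eqref{sde2} onto $H_n$, replacing $W$ by a finite truncation $W_n$. Since $A(t,\cdot)$ is demicontinuous and $B$ is continuous but \emph{not} Lipschitz, the finite-dimensional system has merely continuous coefficients, so I would invoke Skorokhod's existence theorem for stochastic equations with continuous coefficients to obtain a weak solution $X_n$ of the Galerkin system. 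Next I would derive uniform a priori bounds: applying It\^o's formula to $\|X_n(t)\|_H^{2p}$, using the coercivity {\bf (H3)} to produce the good term $-\alpha_2\sum_j \|X_n\|_{V_j}^{q_j}$, the Burkholder-Davis-Gundy inequality for the martingale part, and the diffusion growth {\bf (H5)}, I would obtain \eqref{main 1} for the approximations uniformly in $n$. The crucial point is that the noise is superlinear but \emph{subcritical}, $\hat q_j < q_j$, so after Young's inequality the diffusion contribution is absorbed by the coercive term; this is what closes the estimate despite the superlinear growth.

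With the uniform bounds in hand, I would establish tightness of the laws of $X_n$. The natural spaces dictated by \eqref{main 1} are $L^\infty(0,T;H)$ with the weak-* topology and $L^{q_j}(0,T;V_j)$ with the weak topology, which are not metrizable in a way compatible with the classical Skorokhod theorem; hence, as announced in the introduction, I would use the Skorokhod-Jakubowski representation theorem in topological spaces, whose hypotheses hold because these are duals of separable spaces and admit a countable separating family of continuous functionals. To secure the strong convergence needed for the nonlinear terms, I would additionally record a fractional-in-time regularity bound for $X_n$ from the drift and stochastic integrals and combine it, through an Aubin-Lions-type argument, with the compact embedding $V\subseteq H$ to get tightness in $L^r(0,T;H)$ for a suitable $r$. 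Applying the Jakubowski theorem on the product space yields a new probability space carrying copies $\tilde X_n, \tilde W_n$ with the same laws, converging almost surely to limits $\tilde X, \tilde W$ simultaneously in all these topologies.

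The heart of the argument, and the step I expect to be the main obstacle, is identifying $\tilde X$ as a solution. Passing to the limit in the stochastic integral $\int_0^\cdot B(s,\tilde X_n)\,d\tilde W_n$ relies on the strong $L^r(0,T;H)$ convergence together with the continuity conditions \eqref{h5b}-\eqref{h5c} on $B$ and the uniform moments from \eqref{main 1} to guarantee uniform integrability; this is delicate precisely because $B$ is only continuous and grows superlinearly. For the drift, $A(\cdot,\tilde X_n)$ converges only weakly in $\sum_j L^{q_j/(q_j-1)}(0,T;V_j^*)$ to some limit $\bar A$, and since $A$ is not monotone but merely pseudo-monotone I cannot identify $\bar A=A(\cdot,\tilde X)$ directly. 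Instead I would run the standard pseudo-monotone limiting procedure: write an It\^o energy equality for $\|\tilde X_n\|_H^2$ and for the candidate limit equation, take expectations, and use lower semicontinuity of the $H$-norm under weak convergence together with the already established convergence of the stochastic integrals to deduce $\limsup_n \E\int_0^T (A(s,\tilde X_n), \tilde X_n - \tilde X)_{(V^*,V)}\,ds \le 0$. Feeding this into the pseudo-monotonicity of Lemma \ref{pmon1} then forces $\bar A=A(\cdot,\tilde X)$, completing the identification.

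Finally, I would verify that $\tilde X$ has an $H$-continuous, $\tilde\calf_t$-adapted version via the Gelfand-triple regularity lemma applied to $\tilde X\in \bigcap_j L^{q_j}(0,T;V_j)$ with $V^*$-valued derivative, and that $\tilde W$ is a cylindrical Wiener process for the limit filtration, so that $(\tilde\Omega,\dots,\tilde X,\tilde W)$ is a martingale solution; the bound \eqref{main 1} for $\tilde X$ then follows by lower semicontinuity and Fatou's lemma from the uniform Galerkin estimates. For the uniqueness statement, assuming in addition {\bf (H2)}, I would take two solutions $X,Y$ driven by the same noise, apply It\^o's formula to $\|X(t)-Y(t)\|_H^2$, and use the local monotonicity \eqref{h2a} to obtain $d\|X-Y\|_H^2 \le (g(t)+\varphi(X)+\psi(Y))\|X-Y\|_H^2\,dt + dM_t$ with $M$ a local martingale; a stopping-time localization controls the coefficient through the growth bound \eqref{h2b} and the $L^\infty(0,T;H)\cap\bigcap_j L^{q_j}(0,T;V_j)$ regularity, and Gronwall's inequality then yields pathwise uniqueness. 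Combined with the existence of a martingale solution, the Yamada-Watanabe theorem upgrades this to existence and uniqueness of a solution in the sense of Definition \ref{dsol}.
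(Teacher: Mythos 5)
Your proposal follows the same overall architecture as the paper: Galerkin approximation with a weak-existence theorem for the finite-dimensional systems, It\^{o}-based moment estimates of order $2p$ in which the subcritical growth $\hat q_j<q_j$ lets Young's inequality absorb the diffusion into the coercive term, tightness in the weak/weak-* topologies of $L^{q_j}(0,T;V_j)$ and $L^\infty(0,T;H)$, the Skorokhod--Jakubowski representation on the product of these non-metrizable spaces, identification of the diffusion limit through \eqref{h5b}--\eqref{h5c}, the energy-identity/lower-semicontinuity argument giving $\limsup_n \E\int_0^T (A(s,\tilde X_n),\tilde X_n-\tilde X)\,ds\le 0$, the pseudo-monotone identification of the drift, and Yamada--Watanabe for the uniqueness statement. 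The one place where you genuinely diverge is the strong compactness in $L^r(0,T;H)$: you propose a fractional-in-time Sobolev bound for $X_n$ combined with an Aubin--Lions (Flandoli--Gatarek) compactness lemma, whereas the paper instead proves tightness in $C([0,T],V^*)$ via the Aldous criterion and Jakubowski's tightness theorem in $D([0,T],V^*)$, and then upgrades to strong convergence in $L^{2\underline{q}}(0,T;H)$ on the compact set by the interpolation $\|y_n-y\|_H^2\le\|y_n-y\|_{V^*}\,\|y_n-y\|_V$ against the uniform $L^{\underline{q}}(0,T;V)$ bound (the paper explicitly flags this step as its departure from \cite{roc1}). Both routes are valid; yours is the more classical one but requires establishing the $W^{\alpha,r}(0,T;V^*)$-type estimate for the stochastic convolution, while the paper's avoids that at the cost of the Aldous-type argument. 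The only point you leave compressed is the passage from the integrated $\limsup$ inequality to the a.e. identification $\bar A=A(\cdot,\tilde X)$: invoking Lemma \ref{pmon1} pointwise requires the uniform integrability on $[0,T]\times\tilde\Omega$ of the dominating sequence in the pseudo-monotonicity estimate, which is exactly what the pathwise bounds inherited from the almost-sure convergence in $L^\infty_{w^*}(0,T;H)$ and $L^{q_j}_w(0,T;V_j)$ are for; since your plan does carry those topologies through the Jakubowski step, the needed ingredient is present even though the step is not spelled out.
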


\subsection{Approximate solutions and uniform
estimates}

In this section,  we consider a sequence
of approximate solutions
 to \eqref{sde1}-\eqref{sde2}
by the Galerkin method, and then derive
the uniform estimates of these solutions.

Let $\{u^0_k\}_{k=1}^\infty$ be  
an orthonormal
  basis of $U_0$.
  Given  $n\in \N$,  let $Q_n: U_0 \to \  \text{span} \{u_1^0, \cdots,
  u^0_n\}$ be the  orthogonal  projection. Then we have
   $$
  Q_n W(t) = \sum_{k=1}^n (W(t), u^0_k)_{U_0} u^0_k,
  $$
  where $(\cdot, \cdot)_{U_0}$ is the inner product of $U_0$.

 Since $V$ is dense in $H$,  there exists
  an orthonormal
  basis   $\{h_k\}_{k=1}^\infty$   of $H$
  such that 
  $h_k \in V$ for all $k\in \N$
  and the finite linear combinations of
   $\{h_k\}_{k=1}^\infty$  is dense in $V$.
Given  $n\in \N$,  let $P_n: H \to  H_n=\text{span} \{h_1, \cdots,
  h_n\}$ be the  orthogonal  projection, which can be extended
  from $H$ to $V^*$ by
  $$
  P_n v^*=\sum_{k=1}^n (v^*, h_k)_{(V^*, V)} h_k,
  \quad \forall \ v^*\in V^*.
  $$

Consider the $n$-dimensional stochastic differential equation
for $Z_n \in  H_n$:
\be\label{ode1}
Z_n (t)
=P_n  x
+ \int_0^t P_n A(s, Z_n (s)) ds
+ \int_0^t P_n B(s,Z_n (s))Q_n  dW(s).
\ee

Since 
 $A(t, \cdot) :  V\to
V^*$ is  demicontinuous
  for every $t\in [0,T]$,  we find  that
$P_n A(t,  \cdot): H_n
\to H_n$ is continuous.
  On the other hand, by \eqref{h5b}  we see that
for every $t\in [0, T]$, the operator
$P_n B(t, \cdot)Q_n :  H_n
\to \call_2 (U, H_n)$ is continuous.
Then by \eqref{h3a}-\eqref{h5a}
and  the  theory on the existence 
of  solutions for
  stochastic ordinary differential equations 
  (see e.g., \cite{hof1}), we infer that
 problem \eqref{sde1}-\eqref{sde2}
has a  martingale  solution $Z_n$ on $[0,T]$
defined on a new probability space,
which is still denoted by  $(\Omega, \calf, \p)$.

  Next, we derive the uniform estimates
  on the sequence $\{Z_n\}_{n=1}^\infty$
  of approximate solutions.
  
  \begin{lem}\label{ues1}
If  {\bf (H1)}, {\bf (H2)}$^\prime$
and {\bf (H3)}-{\bf (H5)} are fulfilled,
then 
for any   $p\ge 1$  and 
 $x\in H$, 
  there exists a constant $M_1= M_1(T, p)>0$ 
  such that for all $n\in \N$, 
  the solution $Z_n$ of \eqref{ode1} satisfies,
  $$
  \E \left (
  \sup_{t\in [0,T]}
  \|Z_n (t)\|^{2p}_H
  \right  )
  +
   \E \left ( \left (
  \int_0^T \sum_{j=1}^J  \| Z_n (t) \|_{V_j}^{q_j} dt
  \right )^{p} \right )
  + \E \left (  \int_0^{T }
  \|Z_n (s)\|_H^{2p-2}  \sum_{j=1}^J
  \|Z_n (s)\|_{V_j}^{q_j} ds
  \right )
  $$
  $$
  \le M_1  (1 + \|x\|^{2p}_H).
   $$
  \end{lem}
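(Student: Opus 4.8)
The plan is to establish the three uniform bounds by applying Itô's formula to the function $x\mapsto \|x\|_H^{2p}$ evaluated along the finite-dimensional approximate solution $Z_n$, then invoking the coercivity assumption {\bf (H3)} to control the drift, and finally closing the estimate with Gronwall's inequality and the Burkholder–Davis–Gundy inequality. First I would apply Itô's formula to $\|Z_n(t)\|_H^{2p}$; writing $\phi(x)=\|x\|_H^{2p}$, its derivatives produce the terms $2p\|Z_n\|_H^{2p-2}(A(s,Z_n),Z_n)_{(V^*,V)}$ from the drift and a second-order contribution of the form $p(2p-2)\|Z_n\|_H^{2p-4}\|B(s,Z_n)^*Z_n\|_U^2 + p\|Z_n\|_H^{2p-2}\|P_nB(s,Z_n)Q_n\|_{\call_2(U,H)}^2$ from the quadratic variation, plus a stochastic integral against $dW$. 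The key algebraic step is to group the leading drift term with the trace term of the diffusion so that {\bf (H3)} can be applied: after multiplying \eqref{h3a} by $p\|Z_n\|_H^{2p-2}$, the combination $2p\|Z_n\|_H^{2p-2}(A,Z_n)+p\|Z_n\|_H^{2p-2}\|B\|^2_{\call_2(U,H)}$ is bounded above by $-\alpha_2 p\|Z_n\|_H^{2p-2}\sum_j\|Z_n\|_{V_j}^{q_j} + pg(s)\|Z_n\|_H^{2p-2}(1+\|Z_n\|_H^2)$. This is exactly where the coercive dissipation $-\alpha_2\sum_j\|Z_n\|_{V_j}^{q_j}$ appears with the crucial negative sign, yielding simultaneously the third term on the left-hand side of the claimed estimate.

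The remaining second-order term $p(2p-2)\|Z_n\|_H^{2p-4}\|B^*Z_n\|_U^2$ must be absorbed; I would bound it by $C\|Z_n\|_H^{2p-2}\|B(s,Z_n)\|^2_{\call_2(U,H)}$ via Cauchy–Schwarz, and then control it using the subcritical growth {\bf (H5)}, which gives $\|B\|^2_{\call_2(U,H)}\le \alpha_4\sum_j\|Z_n\|_{V_j}^{\hat q_j}+g(s)(1+\|Z_n\|_H^2)$. Since $\hat q_j<q_j$, Young's inequality lets me split $\|Z_n\|_H^{2p-2}\|Z_n\|_{V_j}^{\hat q_j}$ into a small multiple of $\|Z_n\|_H^{2p-2}\|Z_n\|_{V_j}^{q_j}$—absorbable into the dissipative term from the previous paragraph—plus a term controlled by a power of $\|Z_n\|_H$ alone. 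This subcriticality is precisely what makes the superlinear noise tractable and is the heart of why the estimate closes. For the stochastic integral I would take suprema in $t$, then expectations, and apply the Burkholder–Davis–Gundy inequality to $\E\sup_t|\int_0^t 2p\|Z_n\|_H^{2p-2}(Z_n, B\,dW)|$, bounding its quadratic variation by $\E(\int_0^T\|Z_n\|_H^{4p-2}\|B\|^2_{\call_2(U,H)}\,ds)^{1/2}$ and again using Young's inequality to split off a factor $\eps\,\E\sup_t\|Z_n\|_H^{2p}$ that can be absorbed into the left-hand side.

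After these absorptions I arrive at a differential inequality of the form $\E\sup_{r\le t}\|Z_n(r)\|_H^{2p} + c\,\E\int_0^t\|Z_n\|_H^{2p-2}\sum_j\|Z_n\|_{V_j}^{q_j}\,ds \le \|x\|_H^{2p}+C\int_0^t(1+g(s))\,\E\sup_{r\le s}\|Z_n(r)\|_H^{2p}\,ds$, and Gronwall's lemma (using $g\in L^1([0,T])$) yields the bound on $\E\sup_t\|Z_n\|_H^{2p}$ and the third term, with the constant $M_1$ depending only on $T$ and $p$ and crucially independent of $n$—this uses that $\|P_n\|\le 1$ and $\|Q_n\|\le 1$ so the projections do not inflate any of the constants. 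The second term $\E((\int_0^T\sum_j\|Z_n\|_{V_j}^{q_j}\,ds)^p)$ I would obtain by a parallel argument: the case $p=1$ follows directly by integrating \eqref{h3a} and taking expectations, and the general $p$ follows either by raising the $p=1$ integrated identity to the power $p$ together with the already-established $\sup_t$-bound, or by a separate Itô computation. The main obstacle is the bookkeeping in balancing the exponents so that every superlinear diffusion contribution is dominated by the dissipative drift term via the strict inequality $\hat q_j<q_j$; once the Young's-inequality splittings are arranged so that all absorbable pieces carry a sufficiently small coefficient, the rest is a routine Gronwall closure.
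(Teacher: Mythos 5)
Your proposal is correct and follows essentially the same route as the paper: It\^{o}'s formula for $\|Z_n\|_H^{2p}$, coercivity {\bf (H3)} to produce the dissipative term, Young's inequality exploiting $\hat{q}_j<q_j$ to absorb the superlinear diffusion, BDG plus Gronwall for the supremum bound, and the $p=1$ identity raised to the power $p$ (combined with the already-established supremum bound) for the second term. The only detail you gloss over is the localization by the stopping times $\tau_{n,R}$, followed by Fatou's lemma as $R\to\infty$, which the paper carries out explicitly to justify taking expectations of the stochastic integral.
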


  \begin{proof}
  The proof is standard by using It\^{o}'s formula. 
  For the reader's convenience,
  we sketch the  main steps.
  
  By \eqref{ode1} we have,
  $\p$-almost surely, for all $t\in [0,T]$,
  $$
  \|Z_n (t) \|^2_H
  -\|P_n x\|^2_H
  =
  2\int_0^t (A(s, Z_n(s)), Z_n (s) )_{(V^*, V)} ds$$
\be\label{ues1 p1}
  +\int_0^t \|P_n B(s, Z_n (s))Q_n\|^2_{\call_2(U,H)^2} ds
  +2 \int_0^t Z_n^*(s) P_n B(s, Z_n(s)) Q_n dW(s),
\ee
  where $Z_n^*(s)$ is the element in $H^*$ identified
  with $Z_n(s)$
  by Riesz's representation theorem.
  By \eqref{ues1 p1} and It\^{o}'s formula again we get,
  $\p$-almost surely, for all $t\in [0,T]$,
  $$
  \|Z_n (t)\|_H^{2p}
  -\|P_n x\|_H^{2p}
  =
 2 p\int_0^t \|Z_n (s)\|_H^{2p-2}
 (A(s, Z_n (s)), Z_n(s))_{(V^*, V)} ds
   $$
   $$
   +  p\int_0^t \|Z_n (s)\|_H^{2p-2}
  \|P_n B(s, Z_n (s))Q_n\|^2_{\call_2(U,H)^2} ds
  $$
  $$
   +  2p\int_0^t \|Z_n (s)\|_H^{2p-2}Z_n^*(s) P_n B(s, Z_n(s)) Q_n dW(s)
   $$
\be\label{ues1 p2}
   + 2p(p-1) \int_0^t \|Z_n (s)\|_H^{2p-4}
   \|Z_n^*(s) P_n B(s, Z_n(s)) Q_n\|^2_{\call_2(U,\R)} ds.
\ee
By \eqref{h3a}, \eqref{h5a}  and Young's inequality, 
we obtain from
\eqref{ues1 p2} that, 
$\p$-almost surely, for all $t\in [0,T]$,
  $$
  \|Z_n (t)\|_H^{2p}
  + p 
   \alpha_2  
    \int_0^t
  \|Z_n (s)\|_H^{2p-2}  \sum_{j=1}^J
  \|Z_n (s)\|_{V_j}^{q_j} ds
  $$
  $$
 \le \|P_n x \|_H^{2p}
  + p \int_0^t \left (  g (s) 
   \|Z_n (s)\|_H^{2p-2}+ g(s)  \|Z_n (s)\|_H^{2p}
  \right )ds
  $$
  $$
  + 2p (p-1) \alpha_4   \int_0^t
  \|Z_n (s)\|_H^{2p-2}  \sum_{j=1}^J
  \|Z_n (s)\|_{V_j}^{\hat{q}_j} ds
  $$
  $$
  + 2p(p-1) \int_0^t  \left (
  g (s)
   \|Z_n (s)\|_H^{2p-2}+g (s)  \|Z_n (s)\|_H^{2p}
  \right )ds
   $$
$$
   +  2p\int_0^t \|Z_n (s)\|_H^{2p-2}Z_n^*(s) P_n B(s, Z_n(s)) Q_n dW(s)
 $$
  $$
 \le \|P_n x \|_H^{2p}
   +c_1  \int_0^t  
  |g (s)| (1+ 
   \|Z_n (s)\|_H^{2p}  )
  ds
  $$
  $$
  +  {\frac 12} p\alpha_2  \int_0^t
  \|Z_n (s)\|_H^{2p-2}  \sum_{j=1}^J
  \|Z_n (s)\|_{V_j}^{{q}_j} ds
  +c_1  \int_0^t
  \|Z_n (s)\|_H^{2p-2} ds
  $$
 \be\label{ues1 p3}
   +  2p\int_0^t \|Z_n (s)\|_H^{2p-2}Z_n^*(s) P_n B(s, Z_n(s)) Q_n dW(s),
  \ee
 where $c_1=c_1(p)>0$ is a constant.
 It follows from \eqref{ues1 p3} that
 $\p$-almost surely, for all $t\in [0,T]$,
  $$
  \|Z_n (t)\|_H^{2p}
  +{\frac 12}
   p 
   \alpha_2  
    \int_0^t
  \|Z_n (s)\|_H^{2p-2}  \sum_{j=1}^J
  \|Z_n (s)\|_{V_j}^{q_j} ds
  $$
  $$
 \le \|P_n x \|_H^{2p}
   +c_1  \int_0^t  
  |g (s)| (1+ 
   \|Z_n (s)\|_H^{2p}  )
  ds
  +c_1  \int_0^t
  \|Z_n (s)\|_H^{2p-2} ds
  $$
 \be\label{ues1 p3a}
   +  2p\int_0^t \|Z_n (s)\|_H^{2p-2}Z_n^*(s) P_n B(s, Z_n(s)) Q_n dW(s).
  \ee

  Given $n\in \N$ and $R>0$, denote by
$$
\tau_{n,R}
=\inf  \left \{
t\ge 0: \|Z_n(t)\|_H 
+
\int_0^t \sum_{j=1}^J \| Z_n(s)\|^{q_j}_{V_j} ds
 \ge R
\right \} \wedge T,
$$ with $ \inf \emptyset = +\infty$.
Then by
 \eqref{ues1 p3a}
 we get,  $\p$-almost surely, for all $t\in [0,T]$,
  $$
  \|Z_n (t\wedge \tau_{n,R})\|_H^{2p}
  +  {\frac 12}
   p 
   \alpha_2     \int_0^{t\wedge \tau_{n,R}}
  \|Z_n (s)\|_H^{2p-2}  \sum_{j=1}^J
  \|Z_n (s)\|_{V_j}^{q_j} ds
  $$
  $$
 \le \|P_n x\|_H^{2p}
   +c_1  \int_0^{t\wedge \tau_{n,R}}  |g (s)|
(1+ 
   \|Z_n (s)\|_H^{2p} )
  ds
  +  c_1  \int_0^{t\wedge \tau_{n,R}} 
   \|Z_n (s)\|_H^{2p-2}  
  ds
  $$
  \be\label{ues1 p4a}
   +  2p\int_0^ {t\wedge \tau_{n,R}}  \|Z_n (s)\|_H^{2p-2}Z_n^*(s) P_n B(s, Z_n(s)) Q_n dW(s) 
\ee
 $$
 \le \|x\|_H^{2p}
    +c_1  \int_0^{t }
     |g (s)| ( 1+
   \|Z_n (s\wedge \tau_{n,R} )\|_H^{2p} 
   )
  ds
   +c_1  \int_0^{t }
   \|Z_n (s\wedge \tau_{n,R} )\|_H^{2p-2} 
   ds
  $$
 \be\label{ues1 p4}
   +  2p\int_0^ {t\wedge \tau_{n,R}}  \|Z_n (s)\|_H^{2p-2}Z_n^*(s) P_n B(s, Z_n(s)) Q_n dW(s).
  \ee

  Take the expectation of \eqref{ues1 p4} to obtain
  for all $t\in [0,T]$,
    $$
 \E \left ( \|Z_n (t\wedge \tau_{n,R})\|_H^{2p}
 \right )
  + {\frac 12} p\alpha_2
  \E \left (  \int_0^{t\wedge \tau_{n,R}}
  \|Z_n (s)\|_H^{2p-2}  \sum_{j=1}^J
  \|Z_n (s)\|_{V_j}^{q_j} ds
  \right )
  $$
  \be\label{ues1 p5}
 \le  \|x \|_H^{2p}  
  +c_1  \int_0^{T }
    |g (s)| ds
    +c_1  \int_0^{t }
   \left ( 1  + |g (s)| 
   \right )
    \E \left (
   \|Z_n (s\wedge \tau_{n,R} )\|_H^{2p} 
   \right ) 
  ds
  +{\frac 1p} c_1 T.
  \ee
By
  Gronwall's inequality, we get from \eqref{ues1 p5} that
 for all $t\in [0,T]$,
  $$
 \E \left ( \|Z_n (t\wedge \tau_{n,R})\|_H^{2p}
 \right )
  + {\frac 12}p \alpha_2   
  \E \left (  \int_0^{t\wedge \tau_{n,R}}
  \|Z_n (s)\|_H^{2p-2}  \sum_{j=1}^J
  \|Z_n (s)\|_{V_j}^{q_j} ds
  \right )
  $$
\be\label{ues1 p6}
 \le
 \left (   \| x \|_H^{2p}  
 + {\frac 1p} c_1 T
  + c_1  
  \int_0^ {T }    |g   (s )  |  ds
   \right )e^{  c_1  \int_0^{t } 
  (1+  |g (s)|)
    ds }.
  \ee
 Letting $R\to \infty$, by Fatou's lemma, we obtain
 for all $t\in [0,T]$,
  $$
 \E \left ( \|Z_n (t )\|_H^{2p}
 \right )
  +   {\frac 12} p\alpha_2 
  \E \left (  \int_0^{t }
  \|Z_n (s)\|_H^{2p-2}  \sum_{j=1}^J
  \|Z_n (s)\|_{V_j}^{q_j} ds
  \right )
  $$
     \be\label{ues1 p7}
 \le
 \left (   \| x \|_H^{2p}  
 +{\frac 1p} c_1 T
  + c_1  
  \int_0^ {T }    |g   (s )  |  ds
   \right )e^{  c_1  \int_0^{T } 
  (1+  |g (s)|)
    ds}  
   \le c_2  \left (1+
    \| x \|_H^{2p} \right )   ,
  \ee
 where $c_2 =c_2(T,p)>0$ is a constant.
 
   By 
 \eqref{ues1 p4a}
 we have 
  $$
  \E \left (
  \sup_{t\in [0,T]}\|Z_n (t\wedge \tau_{n,R})\|_H^{2p}
  \right )
 \le 
  \|x\|_H^{2p} 
  + c_1  \int_0^ {T } 
   |g  (s) | ds
   $$
   $$
  +c_1  \int_0^{T } 
 |g (s)|  \E \left (
   \|Z_n (s  )\|_H^{2p} \right )
  ds
   +c_1  \int_0^{T } 
  \E \left (
   \|Z_n (s  )\|_H^{2p-2}  \right )
  ds
  $$
 \be\label{ues1 p8}
   +  2p\E \left (
    \sup_{t\in [0,T]}
   \left |
   \int_0^ {t\wedge \tau_{n,R}}  \|Z_n (s)\|_H^{2p-2}Z_n^*(s) P_n B(s, Z_n(s)) Q_n dW(s)
   \right |\right ).
  \ee
  By \eqref{ues1 p7} we get
   $$
   c_1  \int_0^{T }
  |g (s)|  \E \left (
   \|Z_n (s  )\|_H^{2p} \right )
  ds 
  \le c_1  \int_0^{T }  
   |g (s)|  ds \sup_{s\in [0,T]} \E \left (
   \|Z_n (s  )\|_H^{2p} \right )
  $$
 \be\label{ues1 p9}
  \le c_1 c_2  \left (1+
   \| x \|_H^{2p} \right ) 
    \int_0^{T } 
 |g (s)|
    ds.
\ee
For the last term on the right-hand side
of \eqref{ues1 p8}, by the BDG inequality and \eqref{h5a}, we see
that there exists a constant $c_3=c_3(p)>0$ such 
that
 $$
 2p\E \left (
    \sup_{t\in [0,T]}
   \left |
   \int_0^ {t\wedge \tau_{n,R}}  \|Z_n (s)\|_H^{2p-2}Z_n^*(s) P_n B(s, Z_n(s)) Q_n dW(s)
   \right |\right )
   $$
   $$
\le c_3 \E \left ( \left (
   \int_0^ {T\wedge \tau_{n,R}}
     \|Z_n (s)\|_H^{4p-4}
     \| Z_n^*(s) P_n B(s, Z_n(s)) Q_n \|^2_{\call_2(U, \R)} ds
     \right )^{\frac 12} \right )
   $$
    $$
\le c_3 \E \left ( \left (
   \int_0^ {T\wedge \tau_{n,R}}
     \|Z_n (s)\|_H^{4p-2}
     \|     B(s, Z_n(s))   \|^2_{\call_2(U, H)} ds
     \right )^{\frac 12} \right )
   $$
    $$
\le c_3 \E \left ( \left (
   \int_0^ {T\wedge \tau_{n,R}}
    \|Z_n (s\wedge \tau_{n,R} )\|_H^{2p}
    \|Z_n (s)\|_H^{2p-2}
     \|     B(s, Z_n(s))   \|^2_{\call_2(U, H)} ds
     \right )^{\frac 12} \right )
   $$
    $$
\le c_3 \E \left ( 
 \left ( \sup_{s\in [0,T]} \|Z_n (s\wedge \tau_{n,R} )\|_H^{p}
   \right )
\left (
   \int_0^ {T\wedge \tau_{n,R}}
     \|Z_n (s)\|_H^{2p-2}
     \|     B(s, Z_n(s))   \|^2_{\call_2(U, H)} ds
     \right )^{\frac 12} \right )
   $$
    $$
\le{\frac 12}  \E 
 \left ( \sup_{s\in [0,T]} \|Z_n (s\wedge \tau_{n,R} )\|_H^{2p}
   \right ) 
   +
   {\frac 12} c_3^2
   \E
\left (
   \int_0^ {T\wedge \tau_{n,R}}
     \|Z_n (s)\|_H^{2p-2}
     \|     B(s, Z_n(s))   \|^2_{\call_2(U, H)} ds
     \right )
   $$
    $$
\le{\frac 12}  \E 
 \left ( \sup_{s\in [0,T]} \|Z_n (s\wedge \tau_{n,R} )\|_H^{2p}
   \right ) 
   +
   {\frac 12} c_3^2
   \E
\left (
   \int_0^ {T }
    \left ( g(s) \|Z_n (s)\|_H^{2p-2} 
    + g(s)
     \|Z_n (s)\|_H^{2p}  \right )
     ds
     \right )
   $$
    $$
    +
   {\frac 12} c_3^2 \alpha_4
   \E
\left (
   \int_0^ {T\wedge \tau_{n,R}}
     \|Z_n (s)\|_H^{2p-2}
     \sum_{j=1}^J
     \|  Z_n(s)    \|^{\hat{q}_j}_{V_j}
       ds
     \right )
   $$
     $$
\le{\frac 12}  \E 
 \left ( \sup_{s\in [0,T]} \|Z_n (s\wedge \tau_{n,R} )\|_H^{2p}
   \right ) 
 +c_4 
   \int_0^ {T }
      | g(s)| \left (1+
       \E \left (
     \|Z_n (s)\|_H^{2p}  \right )
     \right )
     ds 
   $$
    $$
    +
   {\frac 12} c_3^2 \alpha_4
   \E
\left (
   \int_0^ {T}
     \|Z_n (s)\|_H^{2p-2}
     \left (c_4 + 
     \sum_{j=1}^J
     \|  Z_n(s)    \|^{q_j}_{V_j}
     \right )
       ds
     \right )
   $$
    $$
\le{\frac 12}  \E 
 \left ( \sup_{s\in [0,T]} \|Z_n (s\wedge \tau_{n,R} )\|_H^{2p}
   \right ) 
   + 
c_4 
   \int_0^ {T }
    | g(s)|
    ds
    \left (1+ 
     \sup_{s\in [0,T]} \E \left (
     \|Z_n (s)\|_H^{2p}  \right )\right )
   $$
    $$
    +
   {\frac 12} c_3^2 \alpha_4
   \E
\left (
   \int_0^ {T}
     \|Z_n (s)\|_H^{2p-2} \left (c_4 +
     \sum_{j=1}^J
     \|  Z_n(s)    \|^{q_j}_{V_j}
     \right )
       ds
     \right )
   $$
 \be\label{ues1 p10}
\le{\frac 12}  \E 
 \left ( \sup_{s\in [0,T]} \|Z_n (s\wedge \tau_{n,R} )\|_H^{2p}
   \right ) 
   + 
  c_5 \left (1 + \| x \|^{2p}_H  
\right )   ,
\ee
 where  the last inequality follows from
 \eqref{ues1 p7}, and $c_5=c_5(T, p)>0$ is a constant.
   
   It follows from \eqref{ues1 p8}-\eqref{ues1 p10}
   that 
   $$   \E 
 \left ( \sup_{t\in [0,T]} \|Z_n (t\wedge \tau_{n,R} )\|_H^{2p}
   \right ) 
   \le c_6  \left (1 + \| x \|^{2p}_H 
\right ),
$$
 where $c_6=c_6(T, p)>0$ is a constant.
Letting $R\to \infty$, by Fatou's lemma we get
\be\label{ues1 p11}  \E 
 \left ( \sup_{t\in [0,T]} \|Z_n (t  )\|_H^{2p}
   \right ) 
   \le c_6  \left (1 + \| x \|^{2p}_H 
\right ).
\ee

On the other hand, by \eqref{ues1 p4a}
with $p=1$ we get, 
 $\p$-almost surely, for all $t\in [0,T]$,
  $$
  {\frac 12}  \alpha_2 \int_0^{t\wedge \tau_{n,R}}
  \sum_{j=1}^J
  \|Z_n (s)\|_{V_j}^{q_j} ds
  \le \| x\|_H^{2} +c_1T 
  +c_1  \int_0^{t\wedge \tau_{n,R}}  
   |g (s)| (1+ 
   \|Z_n (s)\|_H^{2}  )
  ds 
  $$
  $$
   +  2\int_0^ {t\wedge \tau_{n,R}}  
   Z_n^*(s) P_n B(s, Z_n(s)) Q_n dW(s),
    $$
  which implies that for any $p\ge 1$,
   $$
    2^{-p} \alpha_2^p
  \E \left (
  \left (
    \int_0^{T\wedge \tau_{n,R}}
  \sum_{j=1}^J
  \|Z_n (s)\|_{V_j}^{q_j} ds  \right )^p \right )
  \le 3^{p-1}  (
   \| x \|_H^{2}  +c_1T)^p
   $$
    $$
   +3^{p-1} c_1^p
   \E \left (
   \left (  \int_0^{T }  
 |g (s)| (1+ 
   \|Z_n (s)\|_H^{2} )
  ds 
  \right )^p \right )
  $$
  $$
   +   3^{p-1} 2^p
   \E \left (
   \left |
   \int_0^ {T\wedge \tau_{n,R}}  
   Z_n^*(s) P_n B(s, Z_n(s)) Q_n dW(s)
   \right |^p
   \right ),
    $$
    $$
  \le 3^{p-1} 
    (
   \| x \|_H^{2}  +c_1T)^p
  + 6^{p-1} c_1^p
    \left (  \int_0^{T}  
 |g (s)| ds  \right )^p
   \left (1 +   \E \left (\sup_{s\in [0,T]}
   \|Z_n (s)\|_H^{2p} \right )
   \right )
  $$
 \be\label{ues1 p12}
   +  3^{p-1}  2^{p}
   \E \left (
   \left |
   \int_0^ {T\wedge \tau_{n,R}}  
   Z_n^*(s) P_n B(s, Z_n(s)) Q_n dW(s)
   \right |^p
   \right ).
  \ee 
  For the last term in \eqref{ues1 p12},
  by the BDG inequality, \eqref{h5a},
  Young's inequality and \eqref{ues1 p11}  we have
$$
     3^{p-1}  2^{p}
   \E \left (
   \left |
   \int_0^ {T\wedge \tau_{n,R}}  
   Z_n^*(s) P_n B(s, Z_n(s)) Q_n dW(s)
   \right |^p
   \right )
$$
$$
  \le c_7  
   \E \left (
   \left  (
   \int_0^ {T\wedge \tau_{n,R}}  
   \| Z_n^*(s) P_n B(s, Z_n(s)) Q_n \|^2_{\call_2(U, \R)}
   ds
   \right )
   ^{\frac p2}
   \right )
   $$
   $$
  \le c_7  
   \E \left (
   \left  (
   \int_0^   {T\wedge \tau_{n,R}}     (
   \alpha_4    \| Z_n(s)  \|^2_H
   \sum_{j=1}^J
    \| Z_n(s)  \|_{V_j}^{\hat{q}_j}
   +  
   g(s) (
   \| Z_n(s)  \|^2_H
  +
   \| Z_n(s)  \|^4_H
    )   )
   ds
   \right )
   ^{\frac p2}
   \right )
   $$
    $$
  \le  2^{\frac p2}  \alpha_4 ^{\frac p2}
  c_7  
   \E \left (
   \left  (
   \int_0^  {T\wedge \tau_{n,R}}    
     \| Z_n(s)  \|^2_H
   \sum_{j=1}^J
    \| Z_n(s)  \|_{V_j}^{\hat{q}_j} ds
     \right )
   ^{\frac p2}
   \right )
   $$
   $$
   +     2^{\frac p2}
  c_7  
   \E \left (
   \left  (
   \int_0^ {T } 
  | g(s)|  (
   \| Z_n(s)  \|^2_H
  +
   \| Z_n(s)  \|^4_H
    )   
   ds
   \right )
   ^{\frac p2}
   \right )
   $$
    $$
  \le  2^{\frac p2}  \alpha_4 ^{\frac p2}
  c_7  
   \E \left (
   \sup_{s\in [0,T]}
   \| Z_n(s)  \|^p _H
   \left  (
   \int_0^{T\wedge \tau_{n,R}}  
   \sum_{j=1}^J
    \| Z_n(s)  \|_{V_j}^{\hat{q}_j} ds
     \right )
   ^{\frac p2}
   \right )
   $$
   $$
   +     2^{\frac p2}
  c_7   \left (
   \int_0^ {T } 
   |g(s)|  ds \right )^{\frac p2}
   \E \left (
   \left  (
   \sup_{s\in [0,T]}
   \| Z_n(s)  \|^2_H
  + \sup_{s\in [0,T]}
   \| Z_n(s)  \|^4_H
   \right )
   ^{\frac p2}
   \right )
   $$
     $$
  \le 
 2^{-p-1} \alpha_2^p 
    \E \left ( 
   \left  (
   \int_0^   {T\wedge \tau_{n,R}}    
   \sum_{j=1}^J
    \| Z_n(s)  \|_{V_j}^{{q}_j} ds
     \right )
   ^p
   \right )
  + c_8 + 
  c_8
   \E \left (
   \sup_{s\in [0,T]}
   \| Z_n(s)  \|^{2p} _H
   \right )
   $$
   $$
   +     2^{p}
  c_7   \left (
   \int_0^ {T } 
   |g(s)|  ds \right )^{\frac p2}
   \E \left (
   \left  (
   \sup_{s\in [0,T]}
   \| Z_n(s)  \|^p_H
  + \sup_{s\in [0,T]}
   \| Z_n(s)  \|^{2p} _H
   \right ) 
   \right )
   $$
   \be\label{ues1 p13}
    \le 
 2^{-p-1} \alpha_2^p 
    \E \left ( 
   \left  (
   \int_0^  {T\wedge \tau_{n,R}}    
   \sum_{j=1}^J
    \| Z_n(s)  \|_{V_j}^{q_j} ds
     \right )
   ^p
   \right )
 + c_9  \left (1 +  \| x \|^{2p}_H  
\right ),
\ee
    where $c_9=c_9(T, p)>0$ is a constant.
   It follows from \eqref{ues1 p11}-\eqref{ues1 p13}
   that
  \be\label{ues1 p14} 
   \E \left (
  \left (
    \int_0^{T\wedge \tau_{n,R}}
  \sum_{j=1}^J
  \|Z_n (s)\|_{V_j}^{q_j} ds  \right )^p \right )
    \le 
   c_{10}  \left (1 +  \| x\|^{2p}_H 
\right ),
\ee
   where $c_{10}=c_{10}(T, p)>0$ is a constant.
   Letting $R \to \infty$ in \eqref{ues1 p14},
    by Fatou's lemma we obtain 
   \be\label{ues1 p15}
  \E \left (
  \left (
    \int_0^{T }
  \sum_{j=1}^J
  \|Z_n (s)\|_{V_j}^{q_j} ds  \right )^p \right )
  \le 
   c_{10}  \left (1 + \| x \|^{2p}_H 
\right ).
\ee
  Then the desired estimates
  follow from \eqref{ues1 p7},
  \eqref{ues1 p11}
  and
  \eqref{ues1 p15}. 
    \end{proof}

   \begin{lem}\label{ues2}
   If  {\bf (H1)}, {\bf (H2)}$^\prime$
and {\bf (H3)}-{\bf (H5)} are fulfilled,
then 
for any     $x\in H$, 
  there exists a constant $M_2= M_2(T)>0$ 
  such that
  the solution $Z_n$ of \eqref{ode1} satisfies,
  for all $n\in \N$  and
   $j=1,\cdots, J$,
    $$ 
     \E \left (
     \int_0^T
   \| A_j (s,   Z_n(s) ) \|^{\frac {q_j }{q_j  -1}}_{V_j^*}
   ds \right )
   +
    \E \left (
  \int_0^T
  \| B(s, Z_n (s))\|^{2}_{\call_2(U,H)} ds
  \right )
    \le   
 M_2 (1+  \| x \|^{\beta +2}_H ).
  $$  
   \end{lem}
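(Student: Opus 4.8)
The plan is to substitute the growth bounds \textbf{(H4)} and \textbf{(H5)} and then absorb every resulting term into the uniform moments already supplied by Lemma \ref{ues1}. The decisive choice is the exponent $p=\frac{\beta+2}{2}\ge 1$, for which $2p-2=\beta$ and $2p=\beta+2$, so that the joint space--time moment
\[
\E\left(\int_0^T\|Z_n(s)\|_H^{2p-2}\sum_{j=1}^J\|Z_n(s)\|_{V_j}^{q_j}\,ds\right)\le M_1\big(1+\|x\|_H^{2p}\big)
\]
furnished by Lemma \ref{ues1} is precisely the estimate $\E\big(\int_0^T\|Z_n(s)\|_H^{\beta}\sum_{j}\|Z_n(s)\|_{V_j}^{q_j}\,ds\big)\le M_1(1+\|x\|_H^{\beta+2})$.

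For the drift, fix $j$, integrate \eqref{h4a} in time and take expectations to obtain
\[
\E\int_0^T\|A_j(s,Z_n(s))\|_{V_j^*}^{\frac{q_j}{q_j-1}}\,ds
\le \alpha_3\,\E\int_0^T\|Z_n(s)\|_{V_j}^{q_j}\,ds
+\alpha_3\,\E\int_0^T\|Z_n(s)\|_{V_j}^{q_j}\|Z_n(s)\|_H^{\beta}\,ds
+\E\int_0^T |g(s)|\big(1+\|Z_n(s)\|_H^{\beta}\big)\,ds.
\]
In the first two terms I would bound the single factor $\|Z_n(s)\|_{V_j}^{q_j}$ by $\sum_{k=1}^J\|Z_n(s)\|_{V_k}^{q_k}$; the first is then controlled by the $p=1$ case of Lemma \ref{ues1}, while the second is exactly the joint moment above and is $\le \alpha_3 M_1(1+\|x\|_H^{\beta+2})$. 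In the last term I would use $\|Z_n(s)\|_H^{\beta}\le 1+\|Z_n(s)\|_H^{2p}$ with $p=\frac{\beta+2}{2}$, together with $g\in L^1([0,T])$ and the supremum estimate \eqref{ues1 p11}, so that it does not exceed a constant multiple of $\big(\int_0^T|g(s)|\,ds\big)(1+\|x\|_H^{\beta+2})$.

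For the diffusion term I would start from \eqref{h5a}. Since $1\le\hat q_j<q_j$, Young's inequality gives $\|Z_n(s)\|_{V_j}^{\hat q_j}\le 1+\|Z_n(s)\|_{V_j}^{q_j}$, so that $\E\int_0^T\sum_{j}\|Z_n(s)\|_{V_j}^{\hat q_j}\,ds$ is reduced to the $p=1$ case of Lemma \ref{ues1}; the remaining term $\E\int_0^T|g(s)|(1+\|Z_n(s)\|_H^2)\,ds$ is again controlled by $g\in L^1([0,T])$ and \eqref{ues1 p11} with $p=1$. Collecting all contributions, the highest power of $\|x\|_H$ occurring is $\beta+2$, which gives the asserted bound with a constant $M_2=M_2(T)>0$ independent of $n$.

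The argument is essentially bookkeeping once $p=\frac{\beta+2}{2}$ is fixed, and the only point requiring genuine care is the mixed drift factor $\|Z_n\|_{V_j}^{q_j}\|Z_n\|_H^{\beta}$: the sharp exponent $\beta+2$ is delivered by the joint moment built into Lemma \ref{ues1}, so estimating $\sup_t\|Z_n\|_H^{\beta}$ and $\int_0^T\|Z_n\|_{V_j}^{q_j}\,ds$ jointly rather than separately is what makes this particular value of $p$ essential.
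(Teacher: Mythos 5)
Your proof is correct, and the overall skeleton --- substitute the growth bounds {\bf (H4)} and {\bf (H5)} and absorb everything into the moments of Lemma \ref{ues1} --- is the same as the paper's. The one place where you genuinely diverge is the mixed drift term $\E\int_0^T\|Z_n(s)\|_{V_j}^{q_j}\|Z_n(s)\|_H^{\beta}\,ds$: the paper bounds it by $\E\bigl(\sup_{s}\|Z_n(s)\|_H^{\beta}\int_0^T\|Z_n(s)\|_{V_j}^{q_j}\,ds\bigr)$ and then applies Cauchy--Schwarz in $\omega$, which requires the two separate moments $\E\bigl(\sup_s\|Z_n(s)\|_H^{2\beta}\bigr)$ and $\E\bigl(\bigl(\int_0^T\|Z_n(s)\|_{V_j}^{q_j}\,ds\bigr)^2\bigr)$, of order $1+\|x\|_H^{2\beta}$ and $1+\|x\|_H^{4}$ respectively, whose square roots multiply out to $1+\|x\|_H^{\beta+2}$. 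You instead invoke the third (joint space--time) moment in Lemma \ref{ues1} with $p=\frac{\beta+2}{2}\ge 1$, for which $2p-2=\beta$, and read off the bound $M_1(1+\|x\|_H^{\beta+2})$ in one step. Your route is slightly cleaner: it uses a single admissible value of $p$ (always $\ge 1$ since $\beta\ge 0$), whereas the paper's Cauchy--Schwarz step implicitly needs the $2\beta$-moment, which for $0\le\beta<1$ falls outside the stated range $p\ge 1$ of Lemma \ref{ues1} and would need the extra (trivial) interpolation $\|\cdot\|_H^{2\beta}\le 1+\|\cdot\|_H^2$. The remaining terms (the $g$-terms, handled via $\|Z_n\|_H^\beta\le 1+\|Z_n\|_H^{2p}$, and the diffusion bound via $\hat q_j<q_j$ and Young's inequality) are treated essentially identically in both arguments.
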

  
  \begin{proof}
  By  \eqref{h4a} and Lemma \ref{ues1} we get
    for all $j=1,\cdots, J$,
     $$
     \E \left (
     \int_0^T
   \| A_j (s,   Z_n(s) ) \|^{\frac {q_j}{q_j -1}}_{V_j^*}
   ds \right )
    \le   
    \alpha_3
    \E \left (\int_0^T
      \| Z_n (s) \|^{q_j}_{V_j} ds
      \right )
     $$
$$
+ \alpha_3 
 \E \left (
  \sup_{s\in [0,T]}  \| Z_n (s)  \|^\beta _H 
 \int_0^T
      \| Z_n (s) \|^{q_j}_{V_j} ds
    \right )
    $$
    $$
   + \int_0^T |g(s)| ds
   \left ( 1+   \E \left ( 
   \sup_{s\in [0,T]}
    \| Z_n (s)  \|^\beta _H 
    \right ) \right )
 $$
 $$
  \le   
    \alpha_3
    \E \left (\int_0^T
      \| Z_n (s) \|^{q_j}_{V_j} ds
      \right )
 + \alpha_3 
 \left (  \E \left (
  \sup_{s\in [0,T]}  \| Z_n (s)  \|^{2\beta} _H 
  \right ) \right )^{\frac 12}
   \left (  \E \left ( 
 \int_0^T
      \| Z_n (s) \|^{q_j}_{V_j} ds
    \right )^2
    \right )^{\frac 12}
    $$
    $$
   + \int_0^T |g(s)| ds
   \left ( 1+   \E \left ( 
   \sup_{s\in [0,T]}
    \| Z_n (s)  \|^\beta _H 
    \right ) \right )
 $$
 \be\label{ues2 p1}
 \le c_1 (1+ \| x  \|^{\beta +2}_H ),
 \ee
 where $c_1=c_1 (T, \beta)>0$ is a constant.

  On the other hand, by \eqref{h5a}
  and Lemma \ref{ues1} we have
  $$
  \E \left (
  \int_0^T
  \| B(s, Z_n (s))\|^{2}_{\call_2(U,H)} ds
  \right )
   \le
     \alpha_4
    \E \left (\int_0^T \sum_{j=1}^J
      \| Z_n (s) \|^{\hat{q}_j}_{V_j} ds 
     \right )
  $$
$$
+ 
  \int_0^T |g (s )|  ds  
  \left (1+  \E
  \left ( \sup_{s\in [0,T]}   \|Z_n (s) \|^{2}_H 
  \right ) \right )
  $$
   \be \label{ues2 p2}
  \le   
  c_2 (1+\|x \|^{2}_H ),
  \ee
  for some $c_2=c_4 (T)>0$.
  Then \eqref{ues2 p1}-\eqref{ues2 p2} complete the proof.
  \end{proof}

\subsection{Tightness of approximate solutions}
In this section, we prove the tightness
of the sequence
$\{Z_n\}_{n=1}^\infty$  of approximate
solutions in several spaces.
We start with the tightness of
$\{Z_n\}_{n=1}^\infty$
in 
$C([0,T], V^*)$.

      \begin{lem}\label{tig}
  Suppose   {\bf (H1)}, {\bf (H2)}$^\prime$
and {\bf (H3)}-{\bf (H5)}  hold.
If the embedding $V\subseteq H$ is compact,
then  
    the sequence $\{Z_n \}_{n=1}^\infty$
    is tight in $C([0,T], V^*)$.
     \end{lem}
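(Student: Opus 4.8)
The plan is to decompose each Galerkin approximation as $Z_n(t) = P_n x + Y_n(t) + M_n(t)$, where $Y_n(t) = \int_0^t P_n A(s, Z_n(s))\,ds$ is the drift (of bounded variation in $V^*$) and $M_n(t) = \int_0^t P_n B(s, Z_n(s)) Q_n\,dW(s)$ is the stochastic integral, and then to verify the two standard conditions for tightness in the Polish space $C([0,T], V^*)$: (i) for each fixed $t$, the laws of $Z_n(t)$ are tight in $V^*$; and (ii) an Aldous-type condition that controls the modulus of continuity uniformly in $n$. Condition (i) is the easy half: since $V \subseteq H$ is compact, its adjoint inclusion $H \equiv H^* \subseteq V^*$ is compact as well, so every $H$-ball is relatively compact in $V^*$; as Lemma \ref{ues1} gives $\sup_n \E\|Z_n(t)\|_H^2 < \infty$, Chebyshev's inequality shows the laws of $Z_n(t)$ concentrate, up to arbitrarily small mass, on such balls.

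For (ii) I would estimate the increment of $Z_n$ over a short random interval, i.e.\ for a stopping time $\tau \le T$ and $\theta>0$ I bound $\E\|Z_n((\tau+\theta)\wedge T) - Z_n(\tau)\|_{V^*}$, which by Markov's inequality yields the probabilistic Aldous condition. For the drift, Hölder's inequality in time together with {\bf (H4)}, Jensen's inequality and Lemma \ref{ues2} gives, with conjugate exponent $q_j/(q_j-1)$,
$$
\E\|Y_n((\tau+\theta)\wedge T) - Y_n(\tau)\|_{V^*}
\le \sum_{j=1}^J \E\!\int_\tau^{(\tau+\theta)\wedge T}\!\!\|A_j(s, Z_n(s))\|_{V_j^*}\,ds
\le C\,\theta^{1/\tilde{q}},
$$
where I used $\theta^{1/q_j}\le\theta^{1/\tilde{q}}$ for $\theta\le1$. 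For the martingale part the continuous embedding $H\subseteq V^*$ and the It\^o isometry reduce matters to $\E\int_\tau^{(\tau+\theta)\wedge T}\|B(s,Z_n(s))\|^2_{\call_2(U,H)}\,ds$, which I split according to {\bf (H5)}: the superlinear term $\sum_j\|Z_n\|_{V_j}^{\hat{q}_j}$ contributes, after Hölder in time and \emph{because} $\hat{q}_j<q_j$ is strictly subcritical, a factor $\theta^{\,1-\hat{q}_j/q_j}$ with positive exponent (the moments being controlled by Lemma \ref{ues1}), while the remaining term is dominated by $\big(1+\sup_s\|Z_n(s)\|_H^2\big)\int_\tau^{(\tau+\theta)\wedge T}|g(s)|\,ds$, whose factor $\int|g|$ over any interval of length $\le\theta$ is bounded by the deterministic modulus $\omega_g(\theta):=\sup_{|b-a|\le\theta}\int_a^b|g|\to0$. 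Collecting these bounds shows $\sup_n \E\|Z_n((\tau+\theta)\wedge T)-Z_n(\tau)\|_{V^*}\to0$ as $\theta\to0$, uniformly over stopping times, i.e.\ the Aldous condition holds; combined with (i) and the continuity of the paths of each $Z_n$, this yields tightness in $C([0,T],V^*)$.

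The one place that requires care, and which I regard as the main obstacle, is twofold. First, in the drift estimate I implicitly used that $P_n$ is bounded $V^*\to V^*$ uniformly in $n$; if this is not guaranteed by the chosen basis, I would instead establish equicontinuity by testing against the basis functions, exploiting the identity $(P_n A, h_k)_{(V^*,V)} = (A, h_k)_{(V^*,V)}$ for $n\ge k$ and the fact that on $H$-bounded sets the $V^*$-norm topology coincides with the weak topology of $H$, which is metrized by the countable family of coordinates $(\,\cdot\,, h_k)_H$; equicontinuity of each coordinate process then upgrades to $V^*$-norm equicontinuity on the high-probability event $\{\sup_t\|Z_n(t)\|_H\le R\}$ via Arzel\`a--Ascoli. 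Second, and more essentially, the martingale modulus admits \emph{no} clean Hölder rate here, since the diffusion coefficient is superlinear and $g$ is only integrable; the estimate survives precisely because the strict subcriticality $\hat{q}_j<q_j$ in {\bf (H5)} converts the superlinear term into a positive power of $\theta$ through Hölder's inequality in time, and because the uniform continuity of $t\mapsto\int_0^t|g|$ absorbs the lower-order term.
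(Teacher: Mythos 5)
Your proposal is correct in substance, and the two quantitative estimates at its heart are exactly the paper's: the drift increment is handled by H\"older in time, {\bf (H4)} and Lemma \ref{ues2} to produce a power $\theta^{1/\tilde q}$, and the martingale increment is handled by BDG/It\^o isometry together with {\bf (H5)}, where the strict subcriticality $\hat q_j<q_j$ yields the positive power $\theta^{1-\hat q_j/q_j}$ and the absolute continuity of $\int|g|$ absorbs the $g$-term. The difference is in the framework, and here your own caveat is decisive: the paper never estimates $\|Z_n((\tau+\theta)\wedge T)-Z_n(\tau)\|_{V^*}$ directly. It reduces tightness in $C([0,T],V^*)$ to tightness in $D([0,T],V^*)$ and invokes Jakubowski's criterion (Theorem 3.1 of \cite{jak1}): compact containment in $V^*$ (your step (i), via the compactness of $H\subseteq V^*$) plus tightness in $D([0,T],\R)$ of the scalar processes $(Z_n(\cdot),h)_{(V^*,V)}$ for $h$ ranging over the countable point-separating set $\bigcup_m P_mH$, the latter verified by Aldous. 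Testing against a fixed $h\in P_mH$ makes the projection harmless, since $P_nh=h$ for $n\ge m$ and $(P_nA,h)_{(V^*,V)}=(A,P_nh)_{(V^*,V)}$. This is precisely what circumvents the uniform $V^*\to V^*$ boundedness of $P_n$ that your primary route requires: the basis $\{h_k\}$ is only assumed $H$-orthonormal with finite linear span dense in $V$, with no spectral compatibility with $V$, so that boundedness is genuinely unavailable and your ``fallback'' is not a contingency but the actual proof --- and in substance it coincides with the paper's argument. Two minor remarks: the paper additionally localizes with stopping times $\tau_n^k$, which your global moment bounds from Lemmas \ref{ues1}--\ref{ues2} render dispensable for the first-moment estimates; and your passage from tightness of the scalar coordinate processes back to $C([0,T],V^*)$ should be routed through Jakubowski's theorem (or your metrizability observation on $H$-bounded sets, which amounts to the same thing) rather than through Arzel\`a--Ascoli alone.
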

     
     \begin{proof}
     We follow the argument of \cite{roc1}.
     Since   the uniform topology
   on   $C([0,T], V^*)$
   is the same as 
     the topology
     on  $C([0,T], V^*)$ induced by the 
     Skorokhod topology
     on  $D([0,T], V^*)$, 
     we only need to show 
     $\{Z_n \}_{n=1}^\infty$
     is tight in  $D([0,T], V^*)$.

       By Lemma \ref{ues1},   there exists
    $c_1=c_1(T)>0$ such that
    \be\label{tig p1}
    \E \left (
    \sup_{t\in [0,T]} \|Z_n (t)\|_H^2
    \right ) 
    \le c_1,
    \ee
 By \eqref{tig p1}, 
   for every   $\eps>0$, there exists
    $R=R(\eps, T)>0$ such that
    for all $n\in \N$,
    $$
    \p
    \left (
     \sup_{t\in [0,T]} \|Z_n (t)\|_H
      >R
    \right ) <\eps,
    $$
    that is
    \be\label{tig p2}
    \p \left (
    Z_n (t) \in B_H(R), \ \ \forall \ t\in [0,T]
    \right ) > 1-\eps,
   \ee
    where $B_H(R) =\{ h\in H: \|h\|_H \le R\}$.
    Note that $B_H(R)$ is a compact subset of $V^*$
    by the compact embedding $H\subseteq V^*$.
    
    Since $V^{**}$ separates the points of $V^*$ and $V$ is
    reflexive, we see that $V$ separates the points of $V^*$.
    Since the finite linear combinations of 
    $\{h_k\}_{k=1}^\infty$ are dense in $V$, we
    see that $\bigcup_{n=1}^\infty
    P_n H$ is dense in $V$ and thus also separates the points
    of $V^*$. Then  by Theorem 3.1 in \cite{jak1}
    and  \eqref{tig p2},
    we only need to
    show that for every $h\in 
    \bigcup_{n=1}^\infty
    P_n H$,  the  sequence 
     $(Z_n (t), h)_{(V^*, V)}$ is tight
     in $D([0,T], \R)$, for which, 
     by     \eqref{tig p2} and the 
    Aldous theorem in \cite{ald1}, we only need to
    prove for any stopping time
    $0\le \theta_n \le T$ and any
    $0\le \delta_n \le 1$ with $\delta_n \to 0$,
    \be\label{tig p3}
     \lim_{n\to \infty}
      (Z_n (\theta_n +\delta_n)
      -Z_n (\theta_n) , \  h)_{(V^*, V)}
    =0,
    \quad \text{in probability },
    \ee
    with the convention $\theta_n + \delta_n =T$
    if $ \theta_n + \delta_n>T$.
    
    Given $k, n\in \N$, define a stopping time
    $\tau_n^k$ by
    $$
    \tau_n^k
    =\inf
    \left \{
    t\ge 0:
    \ \|Z_n (t)\|
    +\int_0^t \|Z_n (s)\|_{V}^{\underline{q}}ds
    \ge k
    \right \} \wedge T,
   $$
   where  $ \underline{q}$ is given by \eqref{qj}.
   It follows from Lemma \ref{ues1} that
       \be\label{tig p4}
   \lim_{k\to \infty}
   \sup_{n\in \N}
   \p
   \left (
   \tau_n^k <T
   \right ) =0.
  \ee
  
  Given    $h\in 
    \bigcup_{n=1}^\infty
    P_n H$,  there exists
    $m\in \N$ such that
    $h\in P_m H$,  and  thus $P_nh =h$
    for all $n\ge m$ and 
\be\label{tig p4a}
    \sup_{n\in \N}
    \|P_n h\|_V  \le c_2,
\ee
    for some $c_2>0$.
    For every  $\delta>0$  and $\eta>0$, we have
  $$
  \p
  \left (
  \left |
  \left ( Z_n(\theta_n +\delta)
  -Z_n (\theta_n), \  h
  \right )_{(V^*,V)}
  \right | >\eta
  \right )
  $$
  $$
  \le 
  \p
  \left (
  \left |
  \left ( Z_n(\theta_n +\delta)
  -Z_n (\theta_n), \  h
  \right )_{(V^*,V)}
  \right | >\eta, \ \tau_n^k \ge T
  \right )
  +
    \p
  \left (
   \tau_n^k < T
  \right )
  $$
      $$
=
  \p
  \left (
  \left |
  \left ( Z_n((\theta_n +\delta)\wedge \tau_n^k )
  -Z_n (\theta_n\wedge \tau_n^k), \  h
  \right )_{(V^*,V)}
  \right | >\eta 
  \right )
  +
    \p
  \left (
   \tau_n^k < T
  \right )
  $$ 
        \be\label{tig p5}
\le {\frac 1\eta}
\E 
  \left (
  \left |
  \left ( Z_n((\theta_n +\delta)\wedge \tau_n^k )
  -Z_n (\theta_n\wedge \tau_n^k), \  h
  \right )_{(V^*,V)}
  \right |  
  \right )
  +
    \p
  \left (
   \tau_n^k < T
  \right ).
 \ee
 
 By \eqref{ode1} we have
  $$ 
  \left ( Z_n((\theta_n +\delta)\wedge \tau_n^k )
  -Z_n (\theta_n\wedge \tau_n^k), \  h
  \right )_{(V^*,V)}
   $$
\be\label{tig p6}
   = \int_{\theta_n\wedge \tau_n^k}
   ^{(\theta_n +\delta)\wedge \tau_n^k}
   (A(s, Z_n(s)), \  P_n h)_{(V^*, V)} ds
   + \int_{\theta_n\wedge \tau_n^k}
   ^{(\theta_n +\delta)\wedge \tau_n^k}
   h^* P_nB(s, Z_n (s))Q_n dW,
 \ee
    where $h^*$ is the element in $H^*$ identified
    with $h$ by the  Riesz  representation theorem.
    By \eqref{tig p6} we get
     $$ 
  \E \left (
  \left |
  \left ( Z_n((\theta_n +\delta)\wedge \tau_n^k )
  -Z_n (\theta_n\wedge \tau_n^k), \  h
  \right )_{(V^*,V)}
  \right | \right )
   $$
 \be\label{tig p7}
 \le
 \E \left (
  \int_{\theta_n\wedge \tau_n^k}
   ^{(\theta_n +\delta)\wedge \tau_n^k}
  \|A(s, Z_n(s))\|_{V^*} \| P_n h\|_V  ds
  \right )
   +
   \E \left (
   \left |
    \int_{\theta_n\wedge \tau_n^k}
   ^{(\theta_n +\delta)\wedge \tau_n^k}
   h^* P_nB(s, Z_n (s))Q_n dW
   \right |
   \right ) .
\ee
By \eqref{tig p4a} and Lemma \ref{ues2} we get
 $$
 \E \left (
  \int_{\theta_n\wedge \tau_n^k}
   ^{(\theta_n +\delta)\wedge \tau_n^k}
  \|A(s, Z_n(s))\|_{V^*} \| P_n h\|_V  ds
  \right )
     \le
  c_2 \delta^{\frac 1{\tilde{q}}}
   \E \left ( \left (
  \int_{\theta_n\wedge \tau_n^k}
   ^{(\theta_n +\delta)\wedge \tau_n^k}
  \|A(s, Z_n(s))\|_{V^*}^{\frac {\tilde{q}}{\tilde{q} -1}}    ds
  \right )^  {\frac {\tilde{q} -1}{\tilde{q} }} 
  \right )
  $$
  \be\label{tig p8}
     \le
  c_2 \delta^{\frac 1{\tilde{q}}}
  \left (   \E \left (
  \int_0
   ^{T}
  \|A(s, Z_n(s))\|_{V^*}^{\frac {\tilde{q}}{\tilde{q} -1}}    ds
  \right )
  \right )^  {\frac {\tilde{q} -1}{\tilde{q} }} 
   \le
  c_3 \delta^{\frac 1{\tilde{q}}},
\ee
  where
  $ {\tilde{q}}$ is given by \eqref{qj} and
   $c_3=c_3 (T)>0$ is a constant.
  For the last term in \eqref{tig p6}, by \eqref{h5a},
  Lemma \ref{ues1}
  and 
  the Burkholder inequality  we have
  $$ \E \left (
   \left |
    \int_{\theta_n\wedge \tau_n^k}
   ^{(\theta_n +\delta)\wedge \tau_n^k}
   h^* P_nB(s, Z_n (s))Q_n dW
   \right |
   \right )
   $$
   $$\le 3   \E \left (
   \left (
    \int_{\theta_n\wedge \tau_n^k}
   ^{(\theta_n +\delta)\wedge \tau_n^k}
   \| h^* P_nB(s, Z_n (s))Q_n \|^2_{\call_2(U,H)}
   ds \right )^{\frac 12}
   \right )
   $$
   $$\le 3 \|h\|   \E \left (
   \left (
    \int_{\theta_n\wedge \tau_n^k}
   ^{(\theta_n +\delta)\wedge \tau_n^k}
   \|  B(s, Z_n (s))  \|^2_{\call_2(U,H)}
   ds \right )^{\frac 12}
   \right )
   $$
    $$\le 3 \|h\|   \E \left (
   \left (
    \int_{\theta_n\wedge \tau_n^k}
   ^{(\theta_n +\delta)\wedge \tau_n^k}
   \left (
   \alpha_4 \sum_{j=1}^J
   \|  Z_n (s) \|_{V_j}^{\hat{q}_j}
   +
    g(s)(1+  \|  Z_n (s)   \|^2_H)
    \right )
   ds \right )^{\frac 12}
   \right )
   $$
     $$\le 3 \|h\|   \alpha_4 ^{\frac 12}
     \sum_{j=1}^J
       \E \left (
   \left (
    \int_{\theta_n\wedge \tau_n^k}
   ^{(\theta_n +\delta)\wedge \tau_n^k}
   \|  Z_n (s) \|_{V_j}^{\hat{q}_j}
  ds \right )^{\frac 12} \right )
   $$
     $$
     + 3 \|h\|   \E \left (
   \left (
    \int_{\theta_n\wedge \tau_n^k}
   ^{(\theta_n +\delta)\wedge \tau_n^k}
    |g(s)| (1+  \|  Z_n (s)   \|^2_H)
   ds \right )^{\frac 12}
   \right )
   $$
    $$\le 3 \|h\|   \alpha_4 ^{\frac 12}
     \sum_{j=1}^J
     \delta^{\frac
     {q_j -\hat{q}_j}
     {2q_j}
     }
       \E \left (
   \left (
    \int_{\theta_n\wedge \tau_n^k}
   ^{(\theta_n +\delta)\wedge \tau_n^k}
   \|  Z_n (s)) \|_{V_j}^{{q}_j}
  ds \right )^{\frac {\hat{q}_j}
  {2q_j}
  } \right )
   $$
     $$
     + 3 \|h\|   \E \left (
   (1+  \sup_{s\in [0,T]} \|  Z_n (s))   \|^2_H)^{\frac 12}
      \left (
    \int_{\theta_n\wedge \tau_n^k}
   ^{(\theta_n +\delta)\wedge \tau_n^k}
    |g(s)|  
   ds \right )^{\frac 12}
   \right )
   $$
    $$\le 3 \|h\|   \alpha_4 ^{\frac 12}
     \sum_{j=1}^J
     \delta^{\frac
     {q_j -\hat{q}_j}
     {2q_j}
     }
     \left (    \E \left (
   \int_{0}
   ^ T
   \|  Z_n (s)) \|_{V_j}^{{q}_j}
  ds \right )
   \right )
  ^{\frac {\hat{q}_j}
  {2q_j}
  }
   $$
     $$
     + 3 \|h\|  
 \left  (  1+  \E  
 \left ( 
   \sup_{s\in [0,T]} \|  Z_n (s))   \|^2_H  \right )
 \right )  
   ^{\frac 12}
   \left (
       \E \left (    
    \int_{\theta_n\wedge \tau_n^k}
   ^{(\theta_n +\delta)\wedge \tau_n^k}
    |g(s)|  
   ds \right )
   \right ) ^{\frac 12}
   $$
     $$
     \le c_4
     \sum_{j=1}^J
     \delta^{\frac
     {q_j -\hat{q}_j}
     {2q_j}
     }
   +c_4 
   \left (
       \E \left (    
    \int_{\theta_n}
   ^{(\theta_n +\delta)}
    |g(s)|  
   ds \right )
   \right ) ^{\frac 12}.
   $$
  Since 
  $q_j>\hat{q}_j$ and $g\in L^1(0,T)$,
   by the absolute continuity of Lebesgue
   integral, we find that 
  \be\label{tig p9}
  \lim_{\delta \to  0 }   \sup_{n\in \N}  \E \left (
   \left |
    \int_{\theta_n\wedge \tau_n^k}
   ^{(\theta_n +\delta)\wedge \tau_n^k}
   h^* P_nB(s, Z_n (s))Q_n dW
   \right |
   \right ) =0.
\ee
It follows from \eqref{tig p7}-\eqref{tig p9} that
    \be\label{tig p10}
 \lim_{\delta \to 0 }
 \sup_{n\in \N}  \E \left (
  \left |
  \left ( Z_n((\theta_n +\delta)\wedge \tau_n^k )
  -Z_n (\theta_n\wedge \tau_n^k), \  h
  \right )_{(V^*,V)}
  \right | \right ) =0.
\ee
By \eqref{tig p5} and \eqref{tig p10} we get
for every   $\eta>0$,  
$$
 \limsup_{\delta \to 0 } \ 
 \sup_{n\in \N}
  \p
  \left (
  \left |
  \left ( Z_n(\theta_n +\delta)
  -Z_n (\theta_n), \  h
  \right )_{(V^*,V)}
  \right | >\eta
  \right )
$$
$$
\le {\frac 1\eta} \limsup_{\delta \to 0  } \ 
 \sup_{n\in \N}
\E 
  \left (
  \left |
  \left ( Z_n((\theta_n +\delta)\wedge \tau_n^k )
  -Z_n (\theta_n\wedge \tau_n^k), \  h
  \right )_{(V^*,V)}
  \right |  
  \right )
  +  
 \sup_{n\in \N}
    \p
  \left (
   \tau_n^k < T
  \right )
$$
 \be\label{tig p11}
=  
 \sup_{n\in \N}
    \p
  \left (
   \tau_n^k < T
  \right ).
 \ee
 Letting $k\to \infty$, by \eqref{tig p4} we get
 for every   $\eta>0$,  
$$
 \limsup_{\delta \to 0 }\ 
 \sup_{n\in \N}
  \p
  \left (
  \left |
  \left ( Z_n(\theta_n +\delta)
  -Z_n (\theta_n), \  h
  \right )_{(V^*,V)}
  \right | >\eta
  \right )  \le 0,
$$
which implies \eqref{tig p3} and thus completes the proof.
      \end{proof}

    Let $L^\infty_{w^*}
    (0,T; H)$ be the space  
    $L^\infty
    (0,T; H)$ endowed with the  weak-*  topology,
    and
    $L_w^{q_j}(0,T; V_j)$ be the space
    $L^{q_j}(0,T; V_j)$
    endowed with
    the  weak topology.
    The space
     $L_w^{\underline{q}}(0,T; V)$
     is defined similarly.
    Denote by
  \be\label{caly}
   \caly =
    L^{2\underline{q}}(0,T; H)
    \bigcap
     L_w^{\underline{q}}(0,T; V)\bigcap
      L^\infty_{w^*}
    (0,T; H)\bigcap C([0,T], V^*)   
     \bigcap \left (
     \bigcap_{j=1}^J 
     L_w^{q_j }(0,T; V_j)\right )  ,
\ee
which is endowed with the supremum
topology $\calt$ of the corresponding topologies.
In other words,
 $\calt$ is the smallest topology
 on $\caly$  that is
larger than the union of the following topologies
  on $\caly$: 
 $$L^{2\underline{q}}(0,T; H), \ 
     L_w^{\underline{q}}(0,T; V), \ 
       L^\infty_{w^*}
    (0,T; H), 
    \  C([0,T], V^*) ,   \ 
        L_w^{q_j }(0,T; V_j),
        \  j=1,\cdots, J.
        $$
    
    \begin{lem}\label{tig1}
     Suppose   {\bf (H1)}, {\bf (H2)}$^\prime$
and {\bf (H3)}-{\bf (H5)}  hold.
If the embedding $V\subseteq H$ is compact,
then  
    the sequence $\{Z_n \}_{n=1}^\infty$
    is tight in $(\caly, \calt)$.
     \end{lem}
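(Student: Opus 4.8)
The plan is to prove tightness straight from the definition: for every $\eps>0$ I would produce a single set $K_\eps\subseteq\caly$ that is compact in the supremum topology $\calt$ and satisfies $\p(Z_n\in K_\eps)>1-\eps$ for all $n$. The raw material is already in place. From Lemma \ref{ues1} (with $p=1$) and Chebyshev's inequality, the laws of $\{Z_n\}$ are bounded in probability in $L^\infty(0,T;H)$ and in each $L^{q_j}(0,T;V_j)$, and hence—since $\|\cdot\|_V=\sum_j\|\cdot\|_{V_j}$ with $q_j\ge\underline q$, so that H\"older in time gives a bound in $L^{\underline q}(0,T;V)$—also in $L^{\underline q}(0,T;V)$. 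From Lemma \ref{tig} the sequence is tight in $C([0,T],V^*)$. Thus, given $\eps$, I can choose radii $R_0,R_1,\dots,R_J,R'$ and a compact set $K_0\subseteq C([0,T],V^*)$ so that each of the events $\{\|Z_n\|_{L^\infty(0,T;H)}>R_0\}$, $\{\|Z_n\|_{L^{q_j}(0,T;V_j)}>R_j\}$, $\{\|Z_n\|_{L^{\underline q}(0,T;V)}>R'\}$, $\{Z_n\notin K_0\}$ has probability below $\eps/(J+3)$ uniformly in $n$, and set
\[
K_\eps=\{v\in\caly:\ \|v\|_{L^\infty(0,T;H)}\le R_0,\ \|v\|_{L^{q_j}(0,T;V_j)}\le R_j\ (1\le j\le J),\ \|v\|_{L^{\underline q}(0,T;V)}\le R',\ v\in K_0\}.
\]
By the union bound $\p(Z_n\in K_\eps)>1-\eps$ for all $n$, so everything reduces to showing that $K_\eps$ is $\calt$-compact.

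Note that $K_\eps$ carries no explicit constraint in the strong space $L^{2\underline q}(0,T;H)$; the compactness in that direction must be \emph{produced} from the other bounds. The first step is to observe that $\calt$ restricted to $K_\eps$ is metrizable, so that $\calt$-compactness is equivalent to sequential compactness. Indeed, on the ball of radius $R_0$ the weak-* topology of $L^\infty(0,T;H)=(L^1(0,T;H))^*$ is metrizable, since $H$ (a separable Hilbert space) has the Radon--Nikodym property and $L^1(0,T;H)$ is separable; on bounded balls the weak topologies of the reflexive separable spaces $L^{q_j}(0,T;V_j)$ and $L^{\underline q}(0,T;V)$ are metrizable; and $C([0,T],V^*)$ and $L^{2\underline q}(0,T;H)$ are already metric. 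A set bounded in every factor embeds, via the diagonal map, into a finite product of these metrizable factor spaces (this embedding carries exactly the topology $\calt$), and a subspace of such a product is metrizable, which gives the claim.

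It then remains to show that every sequence $(v_m)\subseteq K_\eps$ has a subsequence converging in $\calt$ to a limit in $K_\eps$. Using Banach--Alaoglu in $L^\infty(0,T;H)$, reflexivity of each $L^{q_j}(0,T;V_j)$ and of $L^{\underline q}(0,T;V)$, and compactness of $K_0$ in $C([0,T],V^*)$, I would extract by a diagonal procedure a subsequence, still denoted $(v_m)$, with $v_m\rightharpoonup^* v$ in $L^\infty(0,T;H)$, $v_m\rightharpoonup v$ in every $L^{q_j}(0,T;V_j)$ and in $L^{\underline q}(0,T;V)$, and $v_m\to v$ in $C([0,T],V^*)$. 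Since all the spaces embed continuously into the Hausdorff space $\mathbb{X}$, these limits coincide. Lower semicontinuity of the norms under weak and weak-* convergence keeps $v$ inside the respective balls, and $K_0$ is closed, so $v\in K_\eps$.

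The hard part is the remaining component, namely upgrading to strong convergence $v_m\to v$ in $L^{2\underline q}(0,T;H)$. Here I would exploit that the embedding $V\subseteq H$ is compact, which via Ehrling's lemma applied to $V\subseteq H\subseteq V^*$ yields, for every $\eta>0$, a constant $C_\eta$ with $\|w\|_H\le\eta\|w\|_V+C_\eta\|w\|_{V^*}$ for all $w\in V$. Integrating this over $[0,T]$ for $w=v_m-v$, the $V^*$-term is bounded by $T\sup_{t}\|v_m-v\|_{V^*}\to0$ from the $C([0,T],V^*)$-convergence, while the $V$-term is bounded by $\eta\,T^{1-1/\underline q}\|v_m-v\|_{L^{\underline q}(0,T;V)}\le\eta C$ from the uniform $L^{\underline q}(0,T;V)$-bound; letting first $m\to\infty$ and then $\eta\to0$ gives $v_m\to v$ in $L^1(0,T;H)$. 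Since $\{v_m\}$ and $v$ lie in the $L^\infty(0,T;H)$-ball of radius $R_0$, the pointwise estimate $\|v_m-v\|_H^{2\underline q}\le(2R_0)^{2\underline q-1}\|v_m-v\|_H$ then upgrades this to $v_m\to v$ in $L^{2\underline q}(0,T;H)$. This yields convergence in each of the five component topologies, hence in $\calt$, so $K_\eps$ is sequentially compact and therefore $\calt$-compact, completing the proof. The main obstacle is precisely this passage from weak and $V^*$-information to strong $H$-convergence in time, together with the bookkeeping forced by the fact that $L^\infty_{w^*}(0,T;H)$ and $L^{q_j}_w(0,T;V_j)$ are not metric, which is why the reduction to bounded sets must precede any sequential-compactness argument.
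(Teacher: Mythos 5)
Your proof is correct and follows essentially the same route as the paper: reduce to a set $K_\eps$ that is bounded in $L^\infty(0,T;H)$, in each $L^{q_j}(0,T;V_j)$ and in $L^{\underline{q}}(0,T;V)$ and contained in a compact subset of $C([0,T],V^*)$, observe that $\calt$ is metrizable on such a set (weak and weak-* topologies being metrizable on bounded sets by separability/reflexivity), and then extract convergent subsequences. The only point of divergence is the final upgrade to strong convergence in $L^{2\underline{q}}(0,T;H)$, where the paper uses the one-line duality estimate $\|w\|_H^{2}=(w,w)_{(V^*,V)}\le\|w\|_{V^*}\,\|w\|_{V}$ combined with the uniform $L^{\underline{q}}(0,T;V)$ bound, while you reach the same conclusion via Ehrling's lemma (legitimate here since $V\subseteq H$ is compact) followed by interpolation with the $L^\infty(0,T;H)$ bound; both arguments are valid.
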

    
    \begin{proof}
    By Lemma \ref{ues1}, we find that there exists
    $c_1=c_1(T)>0$ such that
  $$
    \E \left (
    \sup_{t\in [0,T]} \|Z_n (t)\|_H^2
     +  
    \int_0^T \| Z_n (t) \|_{V}^{\underline{q}} dt
   +\sum_{j=1}^J  
    \int_0^T \| Z_n (t) \|_{V_j}^{q_j} dt
    \right )
    \le c_1.
 $$
    Then for  $\eps>0$, there exists
    $R=R(\eps, T)>0$ such that
    for all $n\in \N$,
    $$
    \p
    \left (
     \sup_{t\in [0,T]} \|Z_n (t)\|_H
     +  \int_0^T \| Z_n (t) \|_{V}^{\underline{q}} dt
     +
     \sum_{j=1}^J  
    \int_0^T \| Z_n (t) \|_{V_j}^{q_j} dt
     >R
    \right ) <{\frac 12} \eps.
    $$
    Let $K_1$ be the   subset  of $\caly$ given by
    $$
    K_1=\left \{
    y\in \caly:  \sup_{t\in [0,T]} \|y (t)\|_H
     +  \int_0^T \| y (t) \|_{V}^{\underline{q}} dt
     +
     \sum_{j=1}^J  
    \int_0^T \| Z_n (t) \|_{V_j}^{q_j} dt
    \le R
    \right \}.
    $$ 
It is clear that
    $\p \left ( Z_n  \in K_1
    \right ) > 1-{\frac 12} \eps$  for all $n\in \N$.
    
    On the other hand, by Lemma \ref{tig} we know
    $\{Z_n\}_{n=1}^\infty $ is tight in 
    in $C([0,T], V^*)$, and thus there exists a compact
    subset $K_2$ of  $C([0,T], V^*)$ such that
     $\p \left ( Z_n  \in K_2
    \right ) > 1-{\frac 12} \eps$  for all $n\in \N$.
    Let $K=K_1\bigcap K_2$. Then
      $\p \left ( Z_n  \in K 
    \right ) > 1 -  \eps$  for all $n\in \N$.
    We will  show  $K$ is a compact
    subset of $(\caly, \calt)$.
    
    Since  $ L^{\underline{q}}(0,T; V)$  is a separable
    reflexive Banach space and $K$ is bounded in
     $ L^{\underline{q}}(0,T; V)$, we see that
     the weak topology of  $ L^{\underline{q}}(0,T; V)$
     on $K$ is metrizable.
     Similarly,
      the weak topology of  $ L^{ {q_j}}(0,T; V_j)$
     on $K$ is also  metrizable
     for every $j=1,\cdots,  J$.
     
     On the other other, since 
     $K$ is bounded in  $L^\infty(0,T; H)
      =( L^1(0,T; H))^*$
     and  $ L^1(0,T; H)$ is separable, we find that
     the weak-*
     topology
     of $ L^\infty(0,T; H)$ is metrizable
     on  $K$.
     Since  $L^{\underline{q}}(0,T; H)
    $ and  $ C([0,T], V^*)$ are also metric spaces,
    we infer that
    $(K, \calt)$ is metrizable, and thus we only need to show
      $(K, \calt)$ is sequentially compact.
     
    Let $\{y_n\}_{n=1}^\infty$ be a sequence in $K$.
    Then $\{y_n\}_{n=1}^\infty$ is bounded in 
     $ L^{\underline{q}}(0,T; V)$,
        $ L^{ {q_j}}(0,T; V_j)$
      and  $ L^\infty(0,T; H)$,
     and hence it has a subsequence
     (not relabeled)
     which is  weakly convergent
     in  $ L^{\underline{q}}(0,T; V)$ 
     and    $ L^{ {q_j}}(0,T; V_j)$,
     and weak-*  convergent
     in  $ L^\infty(0,T; H)$.
     Since $\{y_n\}_{n=1}^\infty$ belongs to $K_2$
      and $K_2$ is compact in $C([0,T], V^*)$, we see
      that $\{y_n\}_{n=1}^\infty$ also has a convergent subsequence
      in $C([0,T], V^*)$.
      Therefore, 
        there exists $y\in K $ such that, up to a subsequence,
      \be\label{tig1 p1}
     \lim_{n\to \infty}
      \| y_n-y\|_{C([0,T], V^*)} =0,
      \ee 
      and $y_n \to y$ in 
      $ L_w^{\underline{q}}(0,T; V)
   $,
           $ L_w^{ {q_j}}(0,T; V_j)$
    and  $ L^\infty_{w^*}
    (0,T; H)$.
    
      It remains to show   $y_n \to y$ in 
      $ L^{2 \underline{q}}(0,T; H)$. 
    Since $y_n,   y  \in K$,  by \eqref{tig1 p1} we have
     $$
     \int_0^T 
      \| y_n (s) -y(s) \|_H^{2\underline{q}}ds
      =
         \int_0^T 
  \left |  ( y_n (s) -y(s), \ y_n (s) -y(s)_{(V^*, V)}
  \right |^{   {\underline{q}} }
    ds
      $$
        $$
        \le  
         \int_0^T  \left (
\|  y_n (s) -y(s)\|_{V^*} \|    y_n (s) -y(s)\|_V
 \right ) ^{   {\underline{q}} }
    ds
    $$
    $$
    \le \|  y_n  -y\|^{   {\underline{q}} }  _{C([0,T], V^*)}
     \int_0^T    \|    y_n (s) -y(s)\|_V
  ^{   {\underline{q}} }
    ds
      $$
       $$
    \le 2^{{\underline{q}} -1 } \|  y_n  -y\|^{   {\underline{q}} }  _{C([0,T], V^*)}
     \int_0^T (   \|    y_n (s) \|_V
  ^{   {\underline{q}} }
  +
   \|    y  (s) \|_V
  ^{   {\underline{q}} })
    ds
      $$
        $$
    \le 2^{{\underline{q}}  }  R  \|  y_n  -y\|^{   {\underline{q}} }  _{C([0,T], V^*)}
    \to 0,
      $$
      as desired.
    \end{proof}

    By Lemma \ref{tig1} and   the Skorokhod-Jakubowski representation theorem on a topological space
    (see Proposition \ref{prop_sj} in Appendix), we have:

\begin{lem} 
\label{tig2}
   Suppose   {\bf (H1)}, {\bf (H2)}$^\prime$
and {\bf (H3)}-{\bf (H5)}  hold.
If the embedding $V\subseteq H$ is compact,
 then
there exist a probability space
 $(\widetilde{\Omega}, \widetilde{\calf},
    \widetilde{\p})$,  
 random variables
 $(\widetilde{{Z}}, \widetilde{W} )$
 and
  $(\widetilde{{Z}}_n, \widetilde{W}_n)$
   defined on $(\widetilde{\Omega}, \widetilde{\calf},
     \widetilde{\p})$ such that 
   in $\caly  
    \times C([0,T], U_0)$, up to a subsequence:

(i) 
For each $n\in \N$, the
 law  of $(\widetilde{Z}_n, \widetilde{W}_n)$
coincides with
    $( {Z}_n, W)$.

(ii)    $(\widetilde{Z}_n, \widetilde{W}_n)\rightarrow (\widetilde{Z}, \widetilde{W} )$,
 $\widetilde{\p}$-almost surely.

(iii)  The sequence $\{\widetilde{Z}_n \}_{n=1}^\infty$
satisfies the uniform estimates: for all $p\ge 1$   and  $n\in \N$,
 $$
  \E \left (
  \sup_{t\in [0,T]}
  \| \widetilde{Z}_n  (t)\|^{2p}_H
  \right  )
  +
   \E \left ( \left (
  \int_0^T \sum_{j=1}^J  \|  \widetilde{Z}_n (t) \|_{V_j}^{q_j} dt
  \right )^{p} \right )
  $$
\be\label{tig2 1}
  + \E \left (  \int_0^{T }
  \| \widetilde{Z}_n  (s)\|_H^{2p-2}  \sum_{j=1}^J
  \| \widetilde{Z}_n  (s)\|_{V_j}^{q_j} ds
  \right ) 
  \le M_3  (1 +  \|x\|^{2p}_H) ,
\ee
  and
$$
  \E \left (
  \sup_{t\in [0,T]}
  \| \widetilde{Z}  (t)\|^{2p}_H
  \right  )
  +
   \E \left ( \left (
  \int_0^T \sum_{j=1}^J  \|  \widetilde{Z} (t) \|_{V_j}^{q_j} dt
  \right )^{p} \right )
  $$
  \be\label{tig2 2}
  + \E \left (  \int_0^{T }
  \| \widetilde{Z}  (s)\|_H^{2p-2}  \sum_{j=1}^J
  \| \widetilde{Z}  (s)\|_{V_j}^{q_j} ds
  \right ) 
  \le M_3  (1 +  \|x\|^{2p}_H) .
\ee
  In addition,
   for all $n\in \N$  and
   $j=1,\cdots, J$,
  \be\label{tig2 3}
     \E \left (
     \int_0^T
   \| A_j (s,     \widetilde{Z} _n(s) ) \|^{\frac {q_j }{q_j  -1}}_{V_j^*}
   ds \right )
   +
    \E \left (
  \int_0^T
  \| P_n B(s,   \widetilde{Z} _n (s)) Q_n \|^{2}_{\call_2(U,H)} ds
  \right )
       \le M_3  (1 +  \|x\|^{\beta+ 2}_H) ,
\ee
  where  $M_3= M_ 3(T, p)>0$  is a constant.
  \end{lem}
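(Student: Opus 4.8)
The plan is to obtain this statement as a direct consequence of the tightness established in Lemma~\ref{tig1} combined with the Skorokhod--Jakubowski representation theorem (Proposition~\ref{prop_sj}). First I would upgrade the tightness of $\{Z_n\}$ to joint tightness of the pairs $\{(Z_n, W)\}$ on $\caly \times C([0,T], U_0)$. Since $W$ is a fixed cylindrical Wiener process, its (constant) sequence of laws is tight in the Polish space $C([0,T], U_0)$, and the product of two tight families is tight; together with Lemma~\ref{tig1} this yields tightness of $\{\mathrm{Law}(Z_n, W)\}$ on the product space.

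Next I would verify that $\caly \times C([0,T], U_0)$ falls within the scope of Proposition~\ref{prop_sj}, that is, it is a Hausdorff topological space carrying a countable family of $\calt$-continuous real functionals separating its points. Such a family can be read off the factor $C([0,T], V^*)$: the maps $y \mapsto (y(t_i), v_k)_{(V^*, V)}$, with $t_i$ ranging over the rationals in $[0,T]$ and $v_k$ over a countable dense subset of $V$, are continuous on $C([0,T], V^*)$, hence continuous for the finer topology $\calt$, and they already separate points of $\caly$ since $\caly$ embeds into $C([0,T], V^*)$ and $V$ separates the points of $V^*$; adjoining a separating family for $C([0,T], U_0)$ finishes the construction. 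Proposition~\ref{prop_sj} then furnishes a probability space $(\widetilde\Omega, \widetilde\calf, \widetilde\p)$ and random variables $(\widetilde Z_n, \widetilde W_n)$, $(\widetilde Z, \widetilde W)$ with $\mathrm{Law}(\widetilde Z_n, \widetilde W_n) = \mathrm{Law}(Z_n, W)$ and $(\widetilde Z_n, \widetilde W_n) \to (\widetilde Z, \widetilde W)$ $\widetilde\p$-almost surely in $\caly \times C([0,T], U_0)$ along a subsequence, which is precisely (i) and (ii).

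For (iii), the estimates \eqref{tig2 1} and \eqref{tig2 3} for $\widetilde Z_n$ follow at once from the equality of laws in (i): the functionals $\sup_t\|\cdot\|_H^{2p}$, $\big(\int_0^T\sum_j\|\cdot\|_{V_j}^{q_j}\big)^p$, $\int_0^T\|\cdot\|_H^{2p-2}\sum_j\|\cdot\|_{V_j}^{q_j}$ and $\int_0^T\|A_j(s,\cdot)\|_{V_j^*}^{q_j/(q_j-1)}\,ds$ are Borel measurable on $(\caly,\calt)$, so their $\widetilde\p$-expectations coincide with the corresponding $\p$-expectations for $Z_n$, and the bounds of Lemmas~\ref{ues1} and \ref{ues2} transfer verbatim. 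For the limit $\widetilde Z$, I would pass to the limit using the almost sure convergence in (ii): the map $Z\mapsto\|Z\|_{L^\infty(0,T;H)}$ is weak-$*$ lower semicontinuous and $Z\mapsto\int_0^T\|Z\|_{V_j}^{q_j}\,dt$ is weakly lower semicontinuous, so Fatou's lemma applied to the $\widetilde\p$-expectation gives the first two terms of \eqref{tig2 2} at every power $p\ge 1$. The mixed term I would not treat by direct lower semicontinuity; instead I would dominate it by $\big(\sup_t\|\widetilde Z\|_H^{2p-2}\big)\int_0^T\sum_j\|\widetilde Z\|_{V_j}^{q_j}\,ds$ and apply the Cauchy--Schwarz inequality in $\widetilde\p$-expectation, reducing it to the sup-$H$-norm bound with $2p$ replaced by $4p-4$ and the $V_j$-integral bound with power $2$, both of which are instances of the first two terms of \eqref{tig2 2} already established for $\widetilde Z$.

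The step I expect to be most delicate is neither the representation theorem nor the lower semicontinuity, but the Borel measurability of the nonlinear functionals on the non-metrizable space $(\caly,\calt)$ that is needed to transfer the estimates through the equality of laws. I would handle this by exhibiting each such functional as a countable supremum (or an increasing limit) of $\calt$-continuous functionals, using a truncation of the $H$-norm weight together with the duality representation $\|w\|_{V_j}=\sup_k (v_k^*, w)_{(V_j^*,V_j)}$ over a countable norming family $\{v_k^*\}$ in the unit ball of $V_j^*$, and the demicontinuity and measurability of $A_j$; this way the measurability required in (iii) and the lower semicontinuity used for \eqref{tig2 2} are obtained simultaneously.
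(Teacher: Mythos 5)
Your proposal is correct and follows essentially the same route as the paper: tightness of the joint laws on $\caly\times C([0,T],U_0)$, verification that this space admits a countable separating family of $\calt$-continuous functionals (the paper uses a dense sequence in $(L^{\underline{q}}(0,T;V))^*$ rather than your point evaluations against a dense set of $V$, but both work since $\calt$ refines the corresponding topology), application of Proposition \ref{prop_sj}, transfer of the bounds of Lemmas \ref{ues1}--\ref{ues2} via equality of laws, and lower semicontinuity plus Fatou for the limit $\widetilde{Z}$. Your extra care on joint tightness, on the Borel measurability needed to transfer expectations on the non-metrizable space $(\caly,\calt)$, and your Cauchy--Schwarz treatment of the mixed term in \eqref{tig2 2} are refinements of points the paper passes over silently, not departures from its argument.
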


 \begin{proof}
 To apply Proposition  \ref{prop_sj}
 on the topological space
 $\caly  
    \times C([0,T], U_0)$, we need to find 
   a sequence of   continuous real-valued
functions
defined
on $\caly  
    \times C([0,T], U_0)$
 that  separate  points. It suffices to find such a sequence
in  $\caly$  and $ C([0,T], U_0)$ separately.
 Since $ C([0,T], U_0)$ is a
  separable Banach space,
 there exists  a sequence of   continuous real-valued
functions
defined
on $    C([0,T], U_0)$   that  separate  points.
  
  For the space $(\caly, \calt)$,  note
  that  $\calt$ is 
  the supremum
topology   of the corresponding topologies
in $\caly$ and hence it is stronger than any  topology of 
 $$ 
    L^{2\underline{q}}(0,T; H), \ 
    L_w^{\underline{q}}(0,T; V),\ 
            L_w^{ {q_j}}(0,T; V_j) , \ 
  L^\infty_{w^*}
    (0,T; H)   \quad \text{and }\  C([0,T], V^*).
    $$
 Let 
  $\{f_j^*\}_{j=1}^\infty$
  be a 
  dense sequence  
  in  $(  L^{\underline{q}}(0,T; V))^*$.
  Then $\{f_j^*\}_{j=1}^\infty$
  is  a sequence
of continuous
functions
on  $ L_w^{\underline{q}}(0,T; V)$
that separate points.
Since $\calt$ is     stronger than
the weak topology of $   L^{\underline{q}}(0,T; V) $,
we infer that
$f_j^*: 
 (\caly, \calt) \to \R$ is also continuous and thus
  $\{f_j^*\}_{j=1}^\infty$
forms   a sequence
of continuous
functions on $ (\caly, \calt) $
that separate points.

 Then by Lemma  \ref{tig1} and Proposition
 \ref{prop_sj} we obtain (i) and (ii) immediately.
 The uniform estimate \eqref {tig2 1}
 follows from  (i) and Lemma \ref{ues1},
 while
  \eqref {tig2 2} follows from (i),  \eqref {tig2 1}
  and the lower semicontinuity 
  of $\| \cdot\|_H$ and $\|\cdot \|_V$ on $V^*$.
  Finally, the   uniform estimate \eqref {tig2 3}
 follows from  (i) and  the argument of Lemma \ref{ues2}.
   \end{proof}

    Given $n\in \N$, let
    $(\widetilde{\calf}^n_t)_{t\in [0,T]}$ be the filtration
    which satisfies   the usual condition   generated by
    $$
    \{  \widetilde{Z}_n (s), \ 
    \widetilde{W}_n (s): \ s\in [0, t]\}.
    $$
    Similarly,
     let
    $(\widetilde{\calf}_t)_{t\in [0,T]}$ be the filtration
    which satisfies   the usual condition   generated by
    $$
    \{  \widetilde{Z} (s), \ 
    \widetilde{W} (s): \ s\in [0, t]\}.
    $$
    Then $   \widetilde{W}^n $
    is an  $(\widetilde{\calf}^n_t)$-cylindrical Wiener process, 
    and
     $   \widetilde{W} $
    is an  $(\widetilde{\calf}_t)$-cylindrical Wiener process 
    on  $U$, respectively.
    
    Since $Z_n$ satisfies \eqref{ode1}, by Lemma
    \ref{tig2}  (i) we find  that
    $ \widetilde{Z}_n$ satisfies
   \be\label{ode1n}
     \widetilde{Z}_n (t)
     = P_n x
     +\int_0^t P_n A(s,
      \widetilde{Z}_n (s)) ds
      +
      \int_0^t P_n 
      B(s,  \widetilde{Z}_n (s)) Q_n d
       \widetilde{W}_n (s),
       \ee
    which can be proved by the argument of    \cite{ben1}.
    Next, we prove $
    (\widetilde{\Omega},
    \widetilde{\calf}, ( \widetilde{\calf}_t)_{t\in [0,T]},
    \widetilde{\p}, \widetilde{Z} , \widetilde{W})
    $ is a martingale solution of \eqref{sde1}-\eqref{sde2}.

    \subsection {Proof of Theorem \ref{main}}
    We are now in a position to 
     complete the proof of the main  result
    of this section.

    {\bf Step (i)}: Weak convergence.
     By Lemma \ref{tig2}, we infer that
    there exist 
    $\widetilde{B} \in 
      L^{2}
    ([0,T] \times \widetilde{\Omega}, 
    \call_2(U,H))$ and 
    $ \widetilde{A}_j \in
     \ L^{{\frac {q_j}{q_j -1}}}
    ([0,T] \times \widetilde{\Omega}, V^*_j)$
    for every $ j=1,\cdots, J$,   such that,
    up to a subsequence,
    \be\label{ms1 p1}
    \widetilde{Z}_n
    \to \widetilde{Z}
    \ \text{ weakly   in   } \ L^{q_j}
    ([0,T] \times \widetilde{\Omega}, V_j), \quad \forall \  j=1,\cdots, J,
      \ee
       \be\label{ms1 p2}
   A_j (\cdot,  \widetilde{Z}_n)
    \to \widetilde{A}_j
    \ \text{ weakly   in   } \ L^{{\frac {q_j}{q_j -1}}}
    ([0,T] \times \widetilde{\Omega}, V^*_j), \quad \forall \  j=1,\cdots, J.
      \ee
        \be\label{ms1 p3}
        P_n B(
   \cdot,  \widetilde{Z}_n) Q_n 
    \to \widetilde{B}
    \ \text{ weakly   in   } \ 
      L^{2}
    ([0,T] \times \widetilde{\Omega}, 
    \call_2(U,H)).
      \ee
     On the other hand, by Lemma \ref{tig2} (ii)
     and \eqref{caly} we have,
     $ \widetilde{\p}$-almost surely,
       \be\label{ms1 p4}
  \lim_{n\to \infty}
  \left (   \| \widetilde{Z}_n
 - \widetilde{Z}\|_{ L^{2\underline{q}}(0,T; H)}
+  \| \widetilde{Z}_n
 - \widetilde{Z}\|_{C([0,T], V^*)} \right )
= 0,
      \ee
            \be\label{ms1 p4a}
    \widetilde{Z}_n
    \to \widetilde{Z}
    \ \text{ weakly  \   in   } \ 
         L^{ {q_j}}(0,T; V_j),
         \quad \forall \  j=1, \cdots, J,
         \ee 
        and
                \be\label{ms1 p5}
    \widetilde{Z}_n
    \to \widetilde{Z}
    \ \text{ weak-*  \   in   } \ 
          L^\infty
    (0,T; H).
         \ee
By
\eqref{ms1 p4a} and
 \eqref{ms1 p5} we infer that  $ \widetilde{\p}$-almost surely,
   \be\label{ms1 p6}
  \sup_{1\le j\le J } 
  \   \sup_{n\in \N}
     \|   \widetilde{Z}_n
      \|_{  L^{ {q_j}}(0,T; V_j)}
      <\infty
      \quad \text{and}
      \quad
  \sup_{n\in \N}
   \|  \widetilde{Z}_n\|_{  L^\infty
    (0,T; H)} <\infty .
     \ee

       {\bf Step (ii)}:  Prove 
      $  \widetilde{B} =   B(
   \cdot,  \widetilde{Z})$ a.e. on  
   $ [0,T] \times  \widetilde{\p} $, and
    \be\label{ms1 p6a}
        P_n B(
   \cdot,  \widetilde{Z}_n) Q_n 
    \to \widetilde{B}
    \ \text{ strongly   in   } \ 
      L^{2}
    ([0,T] \times \widetilde{\Omega}, 
    \call_2(U,H) ).
      \ee

   Note that 
   $$ \| P_n B(
   \cdot,  \widetilde{Z}_n)Q_n
   - 
    B(
   \cdot,  \widetilde{Z})
   \|^2_{
   L^2([0,T]\times  \widetilde{\Omega},
   \call_2(U,H)  )
   }
   $$
    $$ 
    \le 3
    \| P_n  \left ( B(
   \cdot,  \widetilde{Z}_n) 
   -   B(
   \cdot,  \widetilde{Z})
   \right ) Q_n
   \|^2_{
   L^2([0,T]\times  \widetilde{\Omega},
   \call_2(U,H)  )
   }
   $$
    $$ 
   + 3
    \| P_n B(
   \cdot,  \widetilde{Z})
   \left ( Q_n
   -  I \right ) 
   \|^2_{
   L^2([0,T]\times  \widetilde{\Omega},
   \call_2(U,H)  )
   }
   +   3
    \| \left ( P_n -I \right )
     B(
   \cdot,  \widetilde{Z})
   \|^2_{
   L^2([0,T]\times  \widetilde{\Omega},
   \call_2(U,H)  )
   }
   $$
    $$ 
    \le 3
    \|    B(
   \cdot,  \widetilde{Z}_n) 
   -   B(
   \cdot,  \widetilde{Z})
   \|^2_{
   L^2([0,T]\times  \widetilde{\Omega},
   \call_2(U,H)  )
   }
   $$
\be\label{ms1 p8}
   + 3
    \| P_n B(
   \cdot,  \widetilde{Z})
   \left ( Q_n
   -  I \right ) 
   \|^2_{
   L^2([0,T]\times  \widetilde{\Omega},
   \call_2(U,H)  )
   }
   +   3
    \| \left ( P_n -I \right )
     B(
   \cdot,  \widetilde{Z})
   \|^2_{
   L^2([0,T]\times  \widetilde{\Omega},
   \call_2(U,H)  )
   }.
   \ee
   For the first term on the right-hand side of \eqref{ms1 p8},
   by \eqref{ms1 p4},  \eqref{ms1 p6} and
 \eqref{h5c}  we get,
    $\widetilde{\p}$-almost surely,
    $$ \int_0^T \|    B(t
   ,  \widetilde{Z}_n (t) ) 
   -   B(
   t,  \widetilde{Z} (t) )
   \|^2_{ 
   \call_2(U,H) 
   } dt 
   \to 0,
   $$
   which along with
   \eqref{tig2 1}-\eqref{tig2 2}
   and \eqref{h5a}   implies that
   \be\label{ms1 p9}
  \lim_{n\to \infty}   \|    B(
   \cdot,  \widetilde{Z}_n) 
   -   B(
   \cdot,  \widetilde{Z})
   \| _{
   L^2([0,T]\times  \widetilde{\Omega},
   \call_2(U,H)  )
   } =0.
\ee
On the other hand, since 
 $B(
   \cdot,  \widetilde{Z})
 \in 
   L^2([0,T]\times  \widetilde{\Omega},
   \call_2(U,H)  )$, we find that
   the last  two terms on the
   right-hand side of \eqref{ms1 p8}
   converge to zero, and thus
   by \eqref{ms1 p8}-\eqref{ms1 p9} we obtain
\be\label{ms1 p10}
\lim_{n\to \infty}
 \| P_n B(
   \cdot,  \widetilde{Z}_n)Q_n
   - 
    B(
   \cdot,  \widetilde{Z})
   \|^2_{
   L^2([0,T]\times  \widetilde{\Omega},
   \call_2(U,H)  )
   }
 =0.
 \ee
 Then \eqref{ms1 p6a} follows from 
    \eqref{ms1 p3} and \eqref{ms1 p10}
immediately.
   
  By  Lemma \ref{tig2} (ii) and 
     Step (ii),    it follows from
     Lemma 2.1 in \cite{deb1} that
   \be\label{ms1 p11}
     \lim_{n\to \infty}
       \int_0^t P_n 
      B(s,  \widetilde{Z}_n (s)) Q_n d
       \widetilde{W}_n (s)
       =  \int_0^t  
      B(s,  \widetilde{Z}  (s))  d
       \widetilde{W}  (s),
    \ee
       in probability in $L^2(0,T; H)$.
       Then letting $n\to \infty$ in \eqref{ode1n},
       by \eqref{ms1 p1}-\eqref{ms1 p2}
       and \eqref{ms1 p11}
       we find  that
       for almost all $(t,\omega) \in [0,T] \times   \widetilde{\Omega} $,
        \be\label{ms1 p12}
     \widetilde{Z}  (t)
     =   x
     +\int_0^t \sum_{j=1}^J  A_j (s) ds
      +
      \int_0^t 
      B(s,  \widetilde{Z}  (s))   d
       \widetilde{W}  (s) \quad \text{in } \ V^*.
       \ee
       By \eqref{tig2 2} and the argument of \cite{kry1}
       we infer that
       there exists an $H$-valued continuous
       $(\widetilde{\calf}_t)$-adapted process
       $\hat{Z}$ such that
       $\hat{Z} =   \widetilde{Z}$ almost everywhere
       on $[0,T] \times   \widetilde{\Omega}$ and
       $\hat{Z} (t)$ is equal to the right-hand side of
       \eqref{ms1 p12} for all $t\in [0,T]$,
    $ \widetilde{\p}$-almost surely.
   From now on, we identify $\hat{Z}$ with
  $ \widetilde{Z}$, and thus \eqref{ms1 p12}
  holds   for all $t\in [0,T]$,
    $ \widetilde{\p}$-almost surely.

      {\bf Step (iii)}: Prove:
     \be\label{ms1 p12a}
      \liminf_{n\to \infty}
       \widetilde{\E} \left (
       \int_0^T \sum_{j=1}^J
       (A_j (t,  \widetilde{Z}_n (t)),
       \  \widetilde{Z}_n (t) )_{(V_j^*, V_j)} dt
       \right )
   \ge
    \widetilde{\E} \left (
       \int_0^T \sum_{j=1}^J
       (  \widetilde{A}_j  (t) ,
       \  \widetilde{Z} (t) )_{(V_j^*, V_j)} dt
       \right ).
    \ee
       
       By \eqref{ode1n},  \eqref{ms1 p12}
       and It\^{o}'s formula, we have
    $$
         \widetilde{\E} \left (
         \| \widetilde{Z}_n (t)\|_H^2
         \right )
         = \|P_n x\|^2
         +2 
        \widetilde{\E} \left (
       \int_0^T \sum_{j=1}^J
       (A_j (t,  \widetilde{Z}_n (t)),
       \  \widetilde{Z}_n (t) )_{(V_j^*, V_j)} dt
       \right )
       $$
         \be\label{ms1 p13}
       +  \widetilde{\E} \left (
       \int_0^T \|P_n B(t, \widetilde{Z}_n (t)  )Q_n
       \|^2_{\call_2(U,H)} dt
       \right ),
   \ee
   and
    $$
         \widetilde{\E} \left (
         \| \widetilde{Z} (t)\|_H^2
         \right )
         = \| x\|^2
         +2 
        \widetilde{\E} \left (
       \int_0^T \sum_{j=1}^J
       ( \widetilde{A}_j  (t) ,
       \  \widetilde{Z} (t) )_{(V_j^*, V_j)} dt
       \right )
       $$
         \be\label{ms1 p14}
       +  \widetilde{\E} \left (
       \int_0^T \|B(t, \widetilde{Z} (t)  )
       \|^2_{\call_2(U,H)} dt
       \right ).
   \ee
      Note that  $ \widetilde{Z}_n
       \to  \widetilde{Z}$ in $C([0,T], V^*)$,
       which along with the lower semicontinuity
       of $\| \cdot \|_H$ on $V^*$ and the Fatou lemma
       implies that
       \be\label{ms1 p15}
   \liminf_{n\to \infty}
     \widetilde{\E} \left (
         \| \widetilde{Z}_n  (t)\|_H^2
         \right )
         \ge
          \widetilde{\E} \left ( \liminf_{n\to \infty}
         \| \widetilde{Z}_n  (t)\|_H^2
         \right )
         \ge    \widetilde{\E} \left ( 
         \| \widetilde{Z}   (t)\|_H^2
         \right ).
         \ee
       Then \eqref{ms1 p12a}
          follows from \eqref{ms1 p6a}
         and \eqref{ms1 p13}-\eqref{ms1 p15}.

       {\bf Step (iv)}:  Prove 
      $ \sum_{j=1}^J
       \widetilde{A}_j  =  \sum_{j=1}^J  A_j (
   \cdot,  \widetilde{Z})$  a.e. on  
   $ [0,T] \times  \widetilde{\p} $.
   
   We follow the argument as in \cite{roc1}
   with  modifications.
   By \eqref{h3a} and \eqref{h4a} we have
   $$
   \sum_{j=1}^J
   (A_j (t, \widetilde{Z}_n (t)),
   \widetilde{Z}_n (t) - \widetilde{Z}(t))_{(V_j^*,
   V_j)}
   $$
   $$
   \le
  -  {\frac 12} \alpha_2
  \sum_{j=1}^J
  \|  \widetilde{Z}_n (t)\|_{V_j}^{q_j}
  +{\frac 12} g(t) (1+ \|  \widetilde{Z}_n (t)\|^2_H)
  +
   \sum_{j=1}^J
  \| A_j (t, \widetilde{Z}_n (t) )\|_{V_j^*}
  \|  \widetilde{Z}(t) \|_{V_j}
   $$
    $$
   \le
  -  {\frac 12} \alpha_2
  \sum_{j=1}^J
  \|  \widetilde{Z}_n (t)\|_{V_j}^{q_j}
  +{\frac 12} g(t) (1+ \|  \widetilde{Z}_n (t)\|^2_H)
  $$
  $$+
   \sum_{j=1}^J
\left (
\left  (\alpha_3 \|  \widetilde{Z}_n (t)\|_{V_j}^{q_j}
+ g(t) \right  )
\left  (1+ \|  \widetilde{Z}_n (t)\|_H^\beta \right )
\right )^{\frac {q_j -1}{q_j}}
  \|  \widetilde{Z}(t) \|_{V_j}
   $$
    $$
   \le
  -  {\frac 14} \alpha_2
  \sum_{j=1}^J
  \|  \widetilde{Z}_n (t)\|_{V_j}^{q_j}
  +  g(t) (1+ \|  \widetilde{Z}_n (t)\|^2_H)
  $$
  \be\label{ms1 p20}
  + c_1
   \sum_{j=1}^J
   (1+   
  \|  \widetilde{Z}_n (t)\|_H^{\beta (q_j -1)} )
   \|  \widetilde{Z}(t) \|_{V_j}^{q_j},
\ee
where $c_1>0$ is a constant depending only on
$q_j$ for $j=1,\cdots, J$.
Denote by
   \be\label{ms1 p21}
  \varphi_n (t, \omega)  = \sum_{j=1}^J
   (A_j (t, \widetilde{Z}_n (t, \omega)),
   \widetilde{Z}_n (t, \omega) - \widetilde{Z}(t, \omega ))_{(V_j^*,
   V_j)}
\ee
   and 
   \be\label{ms1 p22}
   \psi_n (t, \omega) =
      g(t) (1+ \|  \widetilde{Z}_n (t, \omega )\|^2_H)
  + c_1
   \sum_{j=1}^J
   (1+   
  \|  \widetilde{Z}_n (t, \omega)\|_H^{\beta (q_j -1)} )
   \|  \widetilde{Z}(t, \omega) \|_{V_j}^{q_j}.
\ee
By \eqref{ms1 p20}-\eqref{ms1 p22} we get
   \be\label{ms1 p23}
\varphi_n (t, \omega)
\le -  {\frac 14} \alpha_2
  \sum_{j=1}^J
  \|  \widetilde{Z}_n (t)\|_{V_j}^{q_j}
  +\psi_n (t,\omega).
  \ee

 We now prove  the sequence 
  $\{\psi_n\}_{n=1}^\infty$ is uniformly
  integrable on $[0,T] \times \widetilde{\Omega}$.
   By  \eqref{tig2 1}, \eqref{tig2 2}
   and \eqref{ms1 p4} we get 
   $$
  \lim_{n\to \infty}
    \|  \widetilde{Z}_n 
   -\widetilde{Z}   \|_{L^1([0,T] \times \widetilde{\Omega}, H) } 
   =0, 
   $$
   and hence there exists a subsequence
   (not relabeled) such that
   for almost all $(t, \omega)
   \in [0,T] \times \widetilde{\Omega}$,
 \be\label{ms1 p7}
    \lim_{n\to \infty}  \|  \widetilde{Z}_n (t, \omega)
   -\widetilde{Z} (t, \omega)
   \|_H  =0.
   \ee
   By
   \eqref{ms1 p22}  and 
   \eqref{ms1 p7}    we have,
  for almost all $(t,\omega) \in [0,T] \times \widetilde{\Omega}$,
   \be\label{ms1 p23}
   \psi_n (t, \omega) \to \psi (t,\omega),
   \ee
   where 
 \be\label{ms1 p23a}
   \psi(t,\omega) =
      g(t) (1+ \|  \widetilde{Z} (t, \omega )\|^2_H)
  + c_1
   \sum_{j=1}^J
   (1+   
  \|  \widetilde{Z} (t, \omega)\|_H^{\beta (q_j -1)} )
   \|  \widetilde{Z}(t, \omega) \|_{V_j}^{q_j}.
\ee
By \eqref{ms1 p6}  and \eqref{ms1 p22} we get
  \be\label{ms1 p24a}
   \psi_n (t, \omega)
    \le
      g(t) (1+  \|  \widetilde{Z}_n (\cdot, \omega )\|^2_
      {L^\infty(0,T; H)} )
  + c_1
   \sum_{j=1}^J
   (1+   
  \|  \widetilde{Z}_n (\cdot, \omega)\|_{
  L^\infty (0,T; H)} ^{\beta (q_j -1)} )
   \|  \widetilde{Z}(t, \omega) \|_{V_j}^{q_j},
\ee
\be\label{ms1 p24}
 \le
      g(t) (1+  c_2(\omega)  )
  + c_1
   \sum_{j=1}^J
   (1+   c_2  ^{\beta (q_j -1)}(\omega)   )
   \|  \widetilde{Z}(t, \omega) \|_{V_j}^{q_j},
  \ee
   where $c_2(\omega) = 
   \sup_{n\in \N}  \|  \widetilde{Z}_n (\cdot, \omega )\|_
      {L^\infty(0,T; H)} <\infty$.
      By \eqref{tig2 2} we see that
      the right-hand side of \eqref{ms1 p24}
      in integrable with respect to $t$
       on $[0,T]$,  $\widetilde{\p}$-almost surely,
      which along with \eqref{ms1 p23}
      and the Lebesgue dominated convergence
      theorem  implies that,
        $\widetilde{\p}$-almost surely,
      \be\label{ms1 p25}
      \int_0^T |\psi_n (t, \omega) -\psi(t,\omega)| dt
      \to 0.
      \ee
 
  Next, we show  the left-hand side of
  \eqref{ms1 p25}  
   is uniformly integrable
   with respect to $\omega$
    on  $\widetilde{\Omega}$.
  Indeed, for every $p>1$,  
  we have
  $$
   \widetilde{\E}
   \left (
   \left |
   \int_0^T |\psi_n (t, \omega) -\psi(t,\omega)| dt
   \right |^p
   \right )
   $$
\be\label{ms1 p26}
   \le 2^{p-1}
   \widetilde{\E}
   \left (
   \left |
   \int_0^T
    |\psi_n (t, \omega)|   dt
   \right |^p
   + \left | \int_0^T
    |\psi (t, \omega)|   dt
   \right |^p
     \right ).
     \ee
     For the first term on the right-hand side
     of \eqref{ms1 p26},
  by \eqref{tig2 1}-\eqref{tig2 2}
  and \eqref{ms1 p24a} we get
  for all $n\in \N$,
$$
   \widetilde{\E}
   \left (
   \left |
   \int_0^T
    |\psi_n (t, \omega)|   ds
   \right |^p \right )
    \le
c_3
   \left (\int_0^T
    |g(t)| dt
    \right )^p   \widetilde{\E}
   \left (
   1+  \sup_{t\in [0,T]} \|  \widetilde{Z}_n (t )\|^{2p}_H
     \right  )  
     $$
     $$
  + c_3 \widetilde{\E}
   \left (
   \sum_{j=1}^J
  \left  (1+   \sup_{t\in [0,T]}
  \|  \widetilde{Z}_n (t)\| _H
  ^{\beta p  (q_j -1)} \right )
  \left (
  \int_0^T  \|  \widetilde{Z}(t) \|_{V_j}^{q_j}
  \right )^p
  \right )
   $$
   $$
     \le
c_3
   \left (\int_0^T
    |g(t)| dt
    \right )^p   \widetilde{\E}
   \left (
   1+  \sup_{t\in [0,T]} \|  \widetilde{Z}_n (t )\|^{2p}_H
     \right  )  
     $$
  \be\label{ms1 p27}
  + 2 c_3 
   \sum_{j=1}^J  \widetilde{\E} 
   \left (
 1+   \sup_{t\in [0,T]}
  \|  \widetilde{Z}_n (t)\| _H
  ^{2 \beta p  (q_j -1)}   \right )
  \widetilde{\E} \left (
  \left (
  \int_0^T  \|  \widetilde{Z}(t) \|_{V_j}^{q_j}
  \right )^{2p} 
  \right ) 
  \le c_4,
\ee
  where $c_4=c_4(p, x)>0$ is a constant.
  Similarly,
   for  the second  term on the right-hand side
     of \eqref{ms1 p26},
  by \eqref{tig2 2}
  and \eqref{ms1 p23a} we get
$$
   \widetilde{\E}
   \left (
   \left |
   \int_0^T
    |\psi  (t, \omega)|   ds
   \right |^p \right )
    \le 
c_5
   \left (\int_0^T
    |g(t)| dt
    \right )^p   \widetilde{\E}
   \left (
   1+  \sup_{t\in [0,T]} \|  \widetilde{Z} (t )\|^{2p}_H
     \right  )  
     $$
 \be\label{ms1 p28}
  + 2 c_3 
   \sum_{j=1}^J  \widetilde{\E} 
   \left (
 1+   \sup_{t\in [0,T]}
  \|  \widetilde{Z} (t)\| _H
  ^{2 \beta p  (q_j -1)}   \right )
  \widetilde{\E} \left (
  \left (
  \int_0^T  \|  \widetilde{Z}(t) \|_{V_j}^{q_j}
  \right )^{2p} 
  \right )<\infty.
\ee
By \eqref{ms1 p26}-\eqref{ms1 p28} we obtain,
  for every $p>1$,  
  we have
  $$
   \widetilde{E}
   \left (
   \left |
   \int_0^T |\psi_n (t, \omega) -\psi(t,\omega)| dt
   \right |^p
   \right )<\infty,
   $$
 and hence    
  $  \int_0^T |\psi_n (t, \omega) -\psi(t,\omega)| dt
$ is uniformly integrable
with respect to $\omega$ on $\widetilde{\Omega}$,
which along with \eqref{ms1 p25}
shows that
   \be\label{ms1 p29}
      \widetilde{E}
   \left (  \int_0^T |\psi_n (t, \omega) -\psi(t,\omega)| dt
   \right )
      \to 0.
      \ee
      By \eqref{ms1 p29} we see that
      $\psi_n \to \psi$ in $L^1([0,T]\times
      \widetilde{\Omega})$ and thus
      $\{\psi_n\}_{n=1}^\infty$
      is uniformly integrable on
      $ [0,T]\times
      \widetilde{\Omega}$.
      
      Based on the   uniform integrability
       of $\{\psi_n\}_{n=1}^\infty$,
       by the 
       pseudo-monotone 
       argument of Lemma 2.16 in \cite{roc1},
       one  can prove   $ \sum_{j=1}^J
       \widetilde{A}_j  =  \sum_{j=1}^J  A_j (
   \cdot,  \widetilde{Z})$  a.e. on  
   $ [0,T] \times  \widetilde{\p} $.
   The details are omitted here.
   Then by \eqref{ms1 p12} we find that
  $ \widetilde{Z}$ is a martingale solution
  of  \eqref{sde1}-\eqref{sde2}.
  
  If, in addition, {\bf (H2)} is fulfilled, then 
  the solutions of \eqref{sde1}-\eqref{sde2}
  are pathwise unique, which follows from the
  standard argument (see,  e.g.,  \cite{liu1, roc1}).
This   completes   the proof of Theorem \ref{main}.

      \begin{rem}
      The uniform estimate
      \eqref{ms1 p6} is crucial
      for proving the convergence
      \eqref{ms1 p25}
      and hence the uniform integrability
      of $\{\psi_n\}_{n=1}^\infty$
      on  $ [0,T]\times
      \widetilde{\Omega}$, which is 
      a consequence of \eqref{ms1 p29}.
      Without \eqref{ms1 p6},    it is
      difficult (if it is not  impossible)  to obtain the
      uniform  integrability
      of $\{\psi_n\}_{n=1}^\infty$.
      To establish the uniform estimate
      \eqref{ms1 p6}, we first show the
      tightness of the  sequence
      $\{{Z}_n\}_{n=1}^\infty$
      in $L^\infty_{w^*} (0,T)$
      and $L^{q_j}_w (0,T; V_j)$, 
       and then apply the 
       Skorokhod-Jakubowski
       representation theorem
       in a topological space  
       instead of  a metric space. 
       The classical Skorokhod
       representation theorem
       in a metric space  does not apply
       to   $L^\infty_{w^*} (0,T)$
       or $L^{q_j}_w (0,T; V_j)$.
      \end{rem}

   \newpage
   Note that 
   if $A(t,\cdot): V\to V^*$ is pseudo-monotone for 
each $t\in [0,T]$,
then it follows from  {\bf (H4)}
that  $A(t,\cdot): V\to V^*$ is 
demicontinuous  for 
each $t\in [0,T]$
(see, e.g., Remark 5.2.12 in \cite{liu1}).
In this case,
from  the proof of Theorem \ref{main},
we can also obtain  the existence of martingale
solutions  to  \eqref{sde1}-\eqref{sde2}
when 
condition {\bf (H2)}$^\prime$  is 
  further  weakened by the
pseudo-monotonicity of the operator $A$.
More precisely, we have:

 \begin{thm}\label{main1}
 Suppose {\bf (H1)}
and {\bf (H3)}-{\bf (H5)} are fulfilled.
If  the embedding $V \subseteq H$ is compact
and $A(t,\cdot): V\to V^*$ is pseudo-monotone for 
each $t\in [0,T]$,
then
for every $x\in H$,  \eqref{sde1}-\eqref{sde2}
has at least one martingale solution 
which  satisfies the uniform estimates given by
 \eqref{main 1}. 
  \end{thm}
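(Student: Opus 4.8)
The plan is to follow the proof of Theorem \ref{main} essentially verbatim, after isolating precisely where the local monotonicity condition {\bf (H2)}$^\prime$ was actually invoked. Inspecting that proof, {\bf (H2)}$^\prime$ enters in only two places: first, through Lemma \ref{pmon1} and the demicontinuity of $A(t,\cdot)$, both of which are needed to set up and solve the finite-dimensional Galerkin system \eqref{ode1}; and second, through the pseudo-monotone identification in Step (iv), where Lemma 2.16 of \cite{roc1} is applied. Since Theorem \ref{main1} assumes pseudo-monotonicity of $A(t,\cdot)$ directly, both of these ingredients remain available, while the remaining a priori estimates and tightness arguments never used {\bf (H2)}$^\prime$ at all. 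Thus the strategy is to verify that each step still closes under the weaker hypotheses.

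First I would set up the Galerkin approximation. The one subtlety is that demicontinuity of $A(t,\cdot):V\to V^*$ can no longer be deduced from {\bf (H1)} and {\bf (H2)}$^\prime$; instead, as recorded in the remark preceding the statement, the assumed pseudo-monotonicity together with the growth bound {\bf (H4)} yields demicontinuity (cf. Remark 5.2.12 in \cite{liu1}). With demicontinuity in hand, $P_n A(t,\cdot):H_n\to H_n$ is continuous and, by \eqref{h5b}, $P_n B(t,\cdot)Q_n:H_n\to \call_2(U,H_n)$ is continuous; the coercivity and growth conditions {\bf (H3)}--{\bf (H5)} then guarantee, exactly as before, the existence of a martingale solution $Z_n$ of \eqref{ode1} on a suitable probability space.

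Next I would reproduce the uniform estimates and the tightness. Crucially, the proofs of Lemmas \ref{ues1} and \ref{ues2} rely only on It\^{o}'s formula together with {\bf (H3)}--{\bf (H5)}, so those bounds carry over unchanged; likewise Lemmas \ref{tig}, \ref{tig1} and \ref{tig2}, whose proofs use only those estimates, condition {\bf (H5)} and the compact embedding $V\subseteq H$. Applying the Skorokhod--Jakubowski representation theorem exactly as in Lemma \ref{tig2} produces the new probability space, the processes $\widetilde{Z}_n\to\widetilde{Z}$, the weak limits $\widetilde{A}_j$ and $\widetilde{B}$, and the uniform estimate \eqref{ms1 p6}. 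Steps (i)--(iii) of the proof of Theorem \ref{main}---the weak convergences, the strong convergence $P_nB(\cdot,\widetilde{Z}_n)Q_n\to B(\cdot,\widetilde{Z})$ in $L^2([0,T]\times\widetilde{\Omega},\call_2(U,H))$, and the $\liminf$ inequality \eqref{ms1 p12a}---then go through word for word, since none of them used {\bf (H2)}$^\prime$.

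The main point, and the only place requiring the new hypothesis, is Step (iv): the identification $\sum_{j=1}^J \widetilde{A}_j = \sum_{j=1}^J A_j(\cdot,\widetilde{Z})$ almost everywhere on $[0,T]\times\widetilde{\Omega}$. The uniform integrability of the sequence $\{\psi_n\}_{n=1}^\infty$ defined in \eqref{ms1 p22} is obtained precisely as before, its proof resting on the estimate \eqref{ms1 p6} and on {\bf (H4)}, not on {\bf (H2)}$^\prime$. With this uniform integrability and the coercivity-type decomposition \eqref{ms1 p20}, the pseudo-monotone limiting argument of Lemma 2.16 in \cite{roc1} applies: that argument requires only that $A(t,\cdot)$ be pseudo-monotone, which is now a direct hypothesis. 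The hard part is therefore to confirm that this final passage to the limit genuinely depended on {\bf (H2)}$^\prime$ only through the pseudo-monotonicity it was previously used to establish; once this is checked, the identification holds, and combined with \eqref{ms1 p12} it shows that $\widetilde{Z}$ is a martingale solution of \eqref{sde1}--\eqref{sde2}. The uniform estimate \eqref{main 1} follows from \eqref{tig2 2}, and since uniqueness is not claimed here (condition {\bf (H2)} is not assumed), no further argument is needed.
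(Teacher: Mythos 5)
Your proposal is correct and follows exactly the route the paper intends: it isolates the two uses of {\bf (H2)}$^\prime$ in the proof of Theorem \ref{main} (demicontinuity for the Galerkin step, now recovered from pseudo-monotonicity together with {\bf (H4)} via Remark 5.2.12 in \cite{liu1}, and the pseudo-monotone identification in Step (iv), now a direct hypothesis) and checks that the estimates, tightness and limit arguments are otherwise unchanged. This matches the paper's own (sketched) justification of Theorem \ref{main1}.
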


\section{Martingale solutions of fractional 
reaction-diffusion  equations}
 \setcounter{equation}{0}

In this section, we apply Theorem \ref{main}
to investigate the existence of martingale
solutions of the fractional stochastic
reaction-diffusion equation
\eqref{rde1}-\eqref{rde3}
defined in a bounded domain
$\o$ in $\R^n$
driven by continuous superlinear noise.

 We start with the definition
 of the fractional Laplace operator.
  Given $s\in (0,1)$, 
  the operator
 $ (-\Delta )^s$ is defined by,
for every  $u\in L^2(\R^n)$,
$$
(-\Delta )^s u
= 
  {\mathcal{F}^{-1}}
(|\xi|^{2 s} (\mathcal{F} u)), \quad \xi \in \R^n,
$$
where
${\mathcal{F}}$    and 
${\mathcal{F}}^{-1}$ are 
the Fourier transform   and the 
inverse 
 Fourier transform, respectively.
 The   fractional Sobolev space
  $H^ s  (\R^n)$ 
is defined  by
  $$
 H^s(\R^n)
 =\left \{
 u\in L^2(\R^n): (-\Delta)^{\frac {s}2}
 u \in L^2(\R^n) \right \},
  $$
  with norm
  $$
 \| u\|_{H^ s (\R^n)}^2
 = \| u\|^2_{L^2(\R^n)}
 +   \| (-\Delta)^{{\frac  s2}} u \|^2_{L^2(\R^n)}, 
$$
and  inner product
 $$
 (u, v)_{H^s (\R^n)}
 =
  \int_{\R^n} u(x)v(x)  dx 
  +
  \int_{\R^n}
  \left ( (-\Delta)^{{\frac  s2}} u (x)
  \right )
  \left (
    (-\Delta)^{{\frac  s2}} v (x)
    \right ) dx,
    $$ 
  for all
  $u, v\in H^s
  (\R^n)$.
  As demonstrated in 
    \cite{dine1}   
 the norm   of
$H^s ({\R^n})$ can be
calculated as follows:
   $$
 \| u\|^2_{H^ s ({\R^n}) }
 = 
  \| u\|^2_{L^2(\R^n)}
  +
  {\frac {1}2}C(n,s)
  \int_{\R^n} \int_{\R^n}
 {\frac {|u(x)- u(y)|^2} {|x-y|^{n+2 s} }} dxdy , 
$$
 for
    $u, v \in H^ s  ({\R^n})$, 
  where  
    $C(n, s) 
 =\frac{s 4^s\Gamma(\frac{n+2s}{2})}
{\pi^\frac{n}{2}\Gamma(1-s)}
$.

In the sequel,  we write  
$H= \{ u\in 
  L^2(\R^n):  u=0 \ \text{a.e.  on }  \R^n \setminus \o
  \}$ with norm
  $\| \cdot \|_H$ and inner product
  $(\cdot, \cdot )_H$,
 $V_1=   \{ u\in 
  H^s(\R^n):  u=0 \ \text{a.e.  on }  \R^n \setminus \o
  \}$, 
  $V_2 =  \{ u\in 
  L^p(\R^n):  u=0 \ \text{a.e.  on }  \R^n \setminus \o
  \}$ and   $V_3=H$.
  Denote by 
  $V= V_1 \bigcap V_2 \bigcap V_3=V_1 \bigcap V_2 $.
Then we have
$V \subseteq  H  \equiv  H^*  \subseteq  V^*$.
Since $\o$ is  bounded, we find that
the embedding $V \subseteq  H$  is compact.
Moreover, the norm $\| (-\Delta)^{\frac s2} \cdot \|_{L^2
(\R^n)}$ is equivalent to the norm
$\|\cdot \|_{H^s(\R^n)}$ on $V_1$.

     Throughout this section,
     we assume 
 $f: \R \times \R^n  \times \R
 \to \R$ is a continuous function such that
 for all $t, u, u_1, u_2 \in \R$ and $x\in \R^n$,
 $f(t,x, 0) =0$ and 
 \be\label{f1}
 f(t,x, u_1) \le f(t,x, u_2) \quad  \text{ if } \ u_2\le u_1,
 \ee
 \be\label{f2}
 f(t,x, u) u \ \le -  \delta_1 |u|^p +  \varphi_1 (t,x),
 \ee
  \be\label{f3}
 | f(t,x, u) |  \ \le \delta_2  |u|^{p-1} +  \varphi_2 (t,x),
 \ee
 where $\delta_1>0$,
 $\delta_2>0$ and $p> 2$ are constants,
 $\varphi_1\in L^1([0,T] \times \R^n) $
 and $\varphi_2 \in L^{\frac p{p-1}}
 ([0,T] \times \R^n)$ for every $T>0$.

 For the nonlinear term 
 $h$, we assume  that
 $h: \R \times \R^n  \times \R
 \to \R$ is   continuous   such that
 for all $t, u, u_1, u_2 \in \R$ and $x\in \R^n$,
 $h(t,x, 0) =0$ and 
 \be\label{h1}
 |h(t,x, u_1) -  h(t,x, u_2)|
 \le \varphi_3 (t,x)
 |u_1-u_2|  ,
 \ee
where $\varphi_3\in L^1(0,T; L^\infty(\R^n))$
 for every $T>0$.

For every $i\in \N$, 
let
        $\sigma_i  :
      \R \times \R^n \times \R
      \to \R$  be a mapping given  by   
  \be\label{sig1}
     \sigma_i  (t,x, u)
     = \sigma_{1,i}   (t,x)
     +   
         \sigma_{2, i } ( u)  
        , 
       \quad \forall \ t\in \R, \ x\in \R^n, \ u\in \R,
\ee
     where      
   $\sigma_{1,i}: [0,T]\to L^2(\R^n)$ for every $T>0$
    such that
   \be\label{sig2}
   \sum_{i=1}^\infty  \| \sigma_{1,i}\|^2
   _ {L^2(0,T; L^2(  \R^n) )}
   <\infty.
   \ee 
    Assume that  
    for each $i\in \N$,
    $\sigma_{2,i}: \R
    \to \R$ is continuous such that
  there exist  positive numbers $\beta_i$ and $\gamma_i$
   such that   for all 
    $u  \in \R$,
    \be\label{sig3}
   |\sigma_{2,i}  (u)  |^2
   \le \beta_i    +\gamma _i  |u|^q,
   \ee
   where $q\in [2, p)$ and
  \be\label{sig4}
  \sum_{i=1}^\infty  ( 
    \beta_i  + \gamma_i  ) <\infty .
     \ee
     
     By \eqref{sig3} we see  that
     $   \sigma_{2,i}$
     is allowed   to have a   superlinear growth
     $q$ as long as $q<p$. 
  Given $v\in V_2$  and $t\in \R$, define an operator
  $B (t, v): l^2\to H$ by
\be\label{sig4a}
  B (t,v) (u) (x)
  =\sum_{i=1}^\infty
  \sigma_i (t,x, v(x)) u_i
   \quad \forall \ 
   u=\{u_i\}_{i=1}^\infty \in l^2,
   \ \ x\in \R^n.
\ee
Then by 
  \eqref{sig1}-\eqref{sig4a}  we infer that
for every $v\in V_2 $ and $t\in \R$,
the operator 
  $\sigma (t, v): l^2\to H$ is a   Hilbert-Schmidt 
  operator 
   with norm
 $$
 \|  B (t,v)   \|^2_{\call_2 (l^2, H)}
 =
 \sum_{i=1}^\infty
 \|  \sigma_i (t,\cdot, v ) \|^2_H
  $$ 
   $$
   \le 2 
 \sum_{i=1}^\infty
  \int_{\o} \left (
   |  \sigma_{1,i}  (t,x)|^2 
     +      | \sigma_{2,i} ( v(x) ) 
       |^2 \right ) dx
  $$
 $$
  \le
  2 \sum_{i=1}^\infty 
  \|\sigma_{1, i} (t) \|^2_ {L^2(\R^n)}
  + 2 |\o|
  \sum_{i=1}^\infty \beta_i 
  +2\sum_{i=1}^\infty
  \gamma_i\int_{\o}  |v(x)|^q dx
 $$
  \be\label{sig4b}
  \le
  2 \sum_{i=1}^\infty 
  \|\sigma_{1, i} (t) \|^2_ {L^2(\R^n)}
  + 2 |\o|
  \sum_{i=1}^\infty \beta_i 
  +2|\o|^{\frac {p-q}p} \sum_{i=1}^\infty
  \gamma_i  \| v \|_{V_2} ^q dx,
\ee
 where $|\o|$ is the volume of $\o$.
 
 The next lemma is concerned with the
 continuity of $B(t,v)$ with respect to $v$ in $ V_2$.
 
 \begin{lem}
 Suppose \eqref{sig1}-\eqref{sig4} hold.
 
 (i)  If $v, v_n \in V_2$ such that
   $\{v_n\}_{n=1}^\infty$ is bounded in $V_2$ and 
    $v_n \to v$ in $H$
   as $n\to \infty$, then for 
  all $t\in [0,T]$,
   \be\label{rde1 1}
  \lim_{n\to \infty}
   B(t,   v_n) =  B(t,  v)
   \ \text{ in }  \ \call_2 (l^2,H).
\ee

(ii) 
If  $ v\in   L^{p}
(0,T; V_2)
  $
  and 
  $\{v_n\}_{n=1}^\infty
 $ is a bounded sequence in
 $    L^{p}
(0,T; V_2)
  $
  such that
  $v_n  \to v$ in $L^2(0,T; H)$,
  then
  \be\label{rde1 2}
   \lim_{n\to \infty}
   B(\cdot ,   v_n) =  B(\cdot,  v)
   \ \text{ in }  \  L^2(0,T; \call_2 (l^2,H)).
\ee
 \end{lem}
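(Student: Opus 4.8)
The plan is to first observe that, since $\sigma_i(t,x,u)=\sigma_{1,i}(t,x)+\sigma_{2,i}(u)$ by \eqref{sig1}, the $t$-dependent part $\sigma_{1,i}$ does not depend on the third argument and therefore cancels in every difference $B(t,v_n)-B(t,v)$. Consequently the difference is in fact independent of $t$, and for both parts one has
$$
\|B(t,v_n)-B(t,v)\|^2_{\call_2(l^2,H)}
=\sum_{i=1}^\infty \int_{\o}
\left| \sigma_{2,i}(v_n(x))-\sigma_{2,i}(v(x))\right|^2 dx
=\int_{\o} G_n(x)\,dx,
$$
where $G_n(x)=\sum_{i=1}^\infty |\sigma_{2,i}(v_n(x))-\sigma_{2,i}(v(x))|^2$. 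In particular, in (i) the conclusion is automatically uniform in $t\in[0,T]$, and everything reduces to showing $\int_\o G_n\,dx\to 0$ for (i) and $\int_0^T\!\!\int_\o G_n\,dx\,dt\to 0$ for (ii). The whole argument will rest on a Vitali (generalized dominated convergence) scheme rather than on classical domination.

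For part (i), I would argue along subsequences. Since $v_n\to v$ in $H=L^2(\o)$, a subsequence (not relabelled) converges a.e.\ on $\o$. At every such point $x$ the continuity of each $\sigma_{2,i}$ gives $\sigma_{2,i}(v_n(x))\to\sigma_{2,i}(v(x))$, and because $\{v_n(x)\}$ is a bounded scalar sequence at this fixed $x$, the growth bound \eqref{sig3} together with the summability \eqref{sig4} furnishes a summable majorant for the series, so dominated convergence for series yields $G_n(x)\to 0$ almost everywhere. To pass to the integral I would use the pointwise bound
$$
G_n(x)\le 4\sum_{i=1}^\infty\beta_i
+2\Big(\sum_{i=1}^\infty\gamma_i\Big)\big(|v_n(x)|^q+|v(x)|^q\big),
$$
obtained from \eqref{sig3}. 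Since $q<p$ and $\{v_n\}$ is bounded in $V_2=L^p(\o)$, the family $\{|v_n|^q\}$ is bounded in $L^{p/q}(\o)$ with exponent $p/q>1$ and hence uniformly integrable on the finite-measure set $\o$; the remaining terms are fixed $L^1$ functions, so $\{G_n\}$ is uniformly integrable. The Vitali convergence theorem then gives $\int_\o G_n\,dx\to 0$ along the subsequence, and the standard subsequence-of-subsequence argument upgrades this to convergence of the full sequence, establishing \eqref{rde1 1}.

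For part (ii), I would run the identical scheme on the product space $(0,T)\times\o$. Here $v_n\to v$ in $L^2(0,T;H)=L^2((0,T)\times\o)$, so a subsequence converges a.e.\ on $(0,T)\times\o$, whence the $t$-dependent analogue $G_n(t,x)\to 0$ almost everywhere exactly as above. The boundedness of $\{v_n\}$ in $L^p(0,T;V_2)=L^p((0,T)\times\o)$ makes $\{|v_n|^q\}$ bounded in $L^{p/q}((0,T)\times\o)$ with $p/q>1$, hence uniformly integrable on the finite-measure set $(0,T)\times\o$; the dominating bound above then shows $\{G_n\}$ is uniformly integrable there, and Vitali gives $\int_0^T\!\!\int_\o G_n\,dx\,dt\to 0$, first along the subsequence and then, by the same subsequence argument, along the whole sequence, establishing \eqref{rde1 2}.

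The main obstacle, and the only place the subcriticality $q<p$ is essential, is the superlinear growth of $\sigma_{2,i}$: the natural majorant of $G_n$ involves $|v_n|^q$, which depends on $n$, so the ordinary dominated convergence theorem with a single fixed dominating function is unavailable. The resolution is precisely that $q<p$ converts the $L^p$-boundedness of $\{v_n\}$ into $L^{p/q}$-boundedness with exponent $p/q>1$, which yields uniform integrability and licenses the Vitali theorem in place of classical domination.
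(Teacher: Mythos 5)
Your proof is correct and uses essentially the same ingredients as the paper's: a.e.\ convergence along a subsequence from the $H$ (resp.\ $L^2(0,T;H)$) convergence, uniform integrability of $\{|v_n|^q\}$ from the boundedness in $L^p$ with $q<p$, the summability \eqref{sig4} to control the series in $i$, and a subsequence-of-subsequences argument to recover the full sequence. The only difference is organizational: the paper fixes $i$, applies a generalized dominated convergence theorem termwise, and then splits the $i$-sum into a finite part plus a uniformly small tail, whereas you sum the series first into a single function $G_n$ and apply Vitali once; both routes are valid and rest on the same subcriticality $q<p$.
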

 
 \begin{proof}
 (i).   Suppose $v, v_n \in V_2$ such that
   $\{v_n\}_{n=1}^\infty$ is bounded in $V_2$ and 
    $v_n \to v$ in $H$. Then there exists  a constant $c_1>0$
    such that
    \be\label{rde1 p1}
    \int_\o | v_n (x)|^p dx \le c_1,
    \quad \forall \ n\in \N.
    \ee
    Since  $v_n \to v$ in $H$, we find that
    there exists a subsequence 
    $\{v_{n_k}\}$ of $\{v_n \}_{n=1}^\infty$
    such that
       \be\label{rde1 p2}
      \lim_{k\to \infty}
       v_{n_k} (x) =  v(x),
       \quad \text{for almost all } \ x\in \o.
     \ee
     Since $q<p$, by   \eqref{rde1 p1} we infer that
     the sequence $\{|v_{n_k}|^q\}_{k=1}^\infty$
     is uniformly integrable on $\o$, which along with \eqref{rde1 p2}
     implies that
         \be\label{rde1 p2a}
     \lim_{k\to \infty}
      \int_\o  |  v_{n_k} (x) |^q dx
      =\int_\o |v(x)|^q dx. 
     \ee
     
     On the other hand,
     by \eqref{rde1 p2} and the continuity of $\sigma_{2,i}$ we get,
     \be\label{rde1 p3}
      \sigma_{2,i} (v_{n_k} (x))  \to 
      \sigma_{2,i} (v(x)),
       \quad \text{for almost all } \ x\in \o.
     \ee
    By   \eqref{sig3} we have
  $$
        |  \sigma_{2,i} (v_{n_k} (x)) - \sigma_{2,i} (v(x))  |^2
$$
      \be\label{rde1 p4}
       \le
       2 |  \sigma_{2,i} (v_{n_k} (x)) |^2
       +2 |  \sigma_{2,i} (v(x))  |^2
          \le
         4 \beta_i
         +2 \gamma_i |v_{n_k} (x)|^q
         +2 \gamma_i |v (x)|^q.
       \ee
       Denote by
         \be\label{rde1 p5}
       \phi_k (x)
       =
          4 \beta_i
         +2 \gamma_i |v_{n_k} (x)|^q
         +2 \gamma_i |v (x)|^q,
         \ee
         and
           \be\label{rde1 p5a}
       \phi (x)
       =  4 \beta_i
         + 4 \gamma_i |v (x)|^q.
        \ee
         By \eqref{rde1 p4}-\eqref{rde1 p5} we get
            \be\label{rde1 p6}
   |  \sigma_{2,i} (v_{n_k} (x)) - \sigma_{2,i} (v(x))  |^2
         \le \phi_k (x).
         \ee
         By \eqref{rde1 p2}-\eqref{rde1 p2a}
         and \eqref{rde1 p5}  we have
         \be\label{rde1 p7} 
          \lim_{k\to \infty}
           \phi_k (x) =\phi (x),
          \quad \text{for almost all } \ x\in \o,
           \ee
           and
            \be\label{rde1 p8} 
          \lim_{k\to \infty}\int_\o
           \phi_k (x) dx  =\int_\o \phi (x) dx.
           \ee
           
           It follows from   \eqref{rde1 p3},
           \eqref{rde1 p6},  \eqref{rde1 p7}-\eqref{rde1 p8}
           and a generalized Lebesgue dominated
           convergence theorem
           that   for every $i\in \N$,
      \be\label{rde1 p9} 
          \lim_{k\to \infty}\int_\o
  |  \sigma_{2,i} (v_{n_k} (x)) - \sigma_{2,i} (v(x))  |^2
            dx
          =0.
           \ee
            Note that  \eqref{sig1} and  \eqref{sig4a} 
            imply  that
            for all $m\in \N$,
            $$
            \| B(t, v_{n_k} ) - B(t, v)\|^2_{\call_2
            (l^2, H)}
            =\sum_{i=1}^\infty \int_\o
             |  \sigma_{2,i} (v_{n_k} (x)) - \sigma_{2,i} (v(x))  |^2
            dx
            $$
            \be\label{rde1 p10}
             =\sum_{i=1}^m \int_\o
             |  \sigma_{2,i} (v_{n_k} (x)) - \sigma_{2,i} (v(x))  |^2
            dx
            +
            \sum_{i=m+1}^\infty \int_\o
             |  \sigma_{2,i} (v_{n_k} (x)) - \sigma_{2,i} (v(x))  |^2
            dx.
         \ee
         
         For the last term in \eqref{rde1 p10},
         by \eqref{rde1 p4} and \eqref{rde1 p1} we get
         $$
          \sum_{i=m+1}^\infty \int_\o
            |  \sigma_{2,i} (v_{n_k} (x)) - \sigma_{2,i} (v(x))  |^2
            dx
            $$
            $$
            \le
          4   \sum_{i=m+1}^\infty  \beta_i |\o|
       +2   \sum_{i=m+1}^\infty  \gamma_i 
         \int_\o |v_{n_k} (x)|^q dx
         +2   \sum_{i=m+1}^\infty  \gamma_i
         \int_\o  |v (x)|^q dx
         $$
            $$
            \le
           4   \sum_{i=m+1}^\infty  \beta_i |\o|
         +2   \sum_{i=m+1}^\infty  \gamma_i 
         \left ({\frac {2(p-q)}{p}} |\o|+
         \int_\o |v_{n_k} (x)|^p  dx 
         +
         \int_\o  |v (x)|^p dx \right )
         $$
            \be\label{rde1 p11}
            \le
           4   \sum_{i=m+1}^\infty  \beta_i |\o|
         +2   \sum_{i=m+1}^\infty  \gamma_i 
         \left ({\frac {2(p-q)}{p}} |\o|+
         c_1
         +
         \int_\o  |v (x)|^p dx \right ).
         \ee
    By  \eqref{sig4}
   and
   \eqref{rde1 p11}
 we find  that
   for every $\eps>0$, there exists
   $m_0 =m_0 (\eps) \in \N$ such that
   for all $k\in \N$,
    \be\label{rde1 p15}
          \sum_{i=m_0 +1}^\infty \int_\o
  |  \sigma_{2,i} (v_{n_k} (x)) - \sigma_{2,i} (v(x))  |^2
            dx
            <{\frac 12} \eps.
         \ee
         
         On the other hand,
         by \eqref{rde1 p9}
         we infer that
         there exists $K=K(\eps) \in \N$ such that
         for all $k\ge K$,
           \be\label{rde1 p16}
          \sum_{i=1}^{m_0}  \int_\o
            |  \sigma_{2,i} (v_{n_k} (x)) - \sigma_{2,i} (v(x))  |^2
            dx
            <{\frac 12} \eps.
         \ee
         Then by \eqref{rde1 p10}
         and \eqref{rde1 p15}-\eqref{rde1 p16}
         we get for  all $t\in [0,T]$,
           $$
          \lim_{k\to \infty}
            \| B(t, v_{n_k} ) - B(t, v)\|^2_{\call_2
            (l^2, H)} =0.
           $$
           By a contradiction argument, one can verify
           that  for all $t\in [0,T]$,
           $$
          \lim_{n\to \infty}
            \| B(t, v_{n} ) - B(t, v)\|^2_{\call_2
            (l^2, H)} =0,
           $$
         which proves (i).

         (ii).   
       Suppose  $ v\in   L^{p}
(0,T; V_2)
  $
  and 
  $\{v_n\}_{n=1}^\infty
 $ is a bounded sequence in
 $    L^{p}
(0,T; V_2)
  $
  such that
  $v_n  \to v$ in $L^2(0,T; H)$.
        Then there exists  a constant $c_2>0$
    such that
    \be\label{rde1 q1}
   \int_0^T  \int_\o | v_n (t, x)|^p dx dt  \le c_2,
    \quad \forall \ n\in \N.
    \ee
    Since  $v_n  \to v$ in $L^2(0,T; H)$,
     we find that
    there exists a subsequence 
    $\{v_{n_k}\}$ of $\{v_n \}_{n=1}^\infty$
    such that
       \be\label{rde1 q2}
      \lim_{k\to \infty}
       v_{n_k} (t, x) =  v(t, x),
       \quad \text{for almost all }  (t,x)\in [0,T]
       \times \o .
     \ee
     Since $q<p$, by   \eqref{rde1 q1} we 
     see that
     the sequence $\{|v_{n_k}|^q\}_{k=1}^\infty$
     is uniformly integrable on $ [0,T]
       \times \o$, 
       and thus, by   \eqref{rde1 q2}
   we obtain 
         \be\label{rde1 q2a}
     \lim_{k\to \infty}
     \int_0^T  \int_\o  |  v_{n_k} (t, x) |^q dx dt
      =\int_0^T \int_\o |v(t, x)|^q dx dt. 
     \ee
     
    By \eqref{rde1 q2} and the continuity of $\sigma_{2,i}$ we get
     \be\label{rde1 q3}
      \sigma_{2,i}  (v_{n_k} (t,x))  \to 
      \sigma_{2,i} (v(t,x)),
       \quad \text{for almost all } 
       (t,x)\in
       [0,T]
       \times \o.
     \ee
     By \eqref{rde1 q2a}-\eqref{rde1 q3}
     and the argument of \eqref{rde1 p9} we infer that
            for every $i\in \N$,
      \be\label{rde1 q9} 
          \lim_{k\to \infty}
          \int_0^T \int_\o
          |  \sigma_{2,i} (v_{n_k} (t, x)) - \sigma_{2,i} (v(t, x))  |^2
            dx dt
          =0.
           \ee
            Note that   \eqref{sig4a} implies that
            for all $m\in \N$,
            $$
            \| B(t, v_{n_k} ) - B(t, v)\|^2_{L^2(0,T;
            \call_2
            (l^2, H))}
            $$
     $$
             =\sum_{i=1}^m
             \int_0^T  \int_\o
              |  \sigma_{2,i} (v_{n_k} (t, x)) - \sigma_{2,i} (v(t, x))  |^2
            dx dt
            $$
                   \be\label{rde1 q10}
            +
            \sum_{i=m+1}^\infty
            \int_0^T \int_\o
             |  \sigma_{2,i} (v_{n_k} (t, x)) - \sigma_{2,i} (v(t, x))  |^2
            dx dt.
         \ee
        For the last term in \eqref{rde1 q10},
         by the argument of
         \eqref{rde1 p11}  we get
         $$
          \sum_{i=m+1}^\infty
          \int_0^T  \int_\o
              |  \sigma_{2,i} (v_{n_k} (t,x)) - \sigma_{2,i} (v(t,x))  |^2
            dxdt
            $$
      \be\label{rde1 q11}
            \le
          4   \sum_{i=m+1}^\infty  \beta_i |\o|T
      +2   \sum_{i=m+1}^\infty  \gamma_i 
         \left ({\frac {2(p-q)}{p}} |\o| T +
         c_2
         +
         \int_0^T \int_\o  |v (t, x)|^p dx dt \right ).
         \ee
    By  \eqref{sig4}
   and
   \eqref{rde1 q11} we infer that
   for every $\eps>0$, there exists
   $m_0 =m_0 (\eps) \in \N$ such that
   for all $k\in \N$,
    \be\label{rde1 q15}
          \sum_{i=m_0 +1}^\infty 
          \int_0^T \int_\o
             |  \sigma_{2,i} (v_{n_k} (t,x)) - \sigma_{2,i} (v(t,x))  |^2
            dx dt
            <{\frac 12} \eps.
         \ee
         By \eqref{rde1 q9}
         we see  that
         there exists $K=K(\eps) \in \N$ such that
         for all $k\ge K$,
           \be\label{rde1 q16}
          \sum_{i=1}^{m_0} 
          \int_0^T  \int_\o
                |  \sigma_{2,i} (v_{n_k} (t,x)) - \sigma_{2,i} (v(t,x))  |^2
            dx dt
            <{\frac 12} \eps.
         \ee
         Then by \eqref{rde1 q10}
         and \eqref{rde1 q15}-\eqref{rde1 q16}
         we obtain,
           $$
          \lim_{k\to \infty}
            \| B(t, v_{n_k} ) - B(t, v)\|^2_{L^2(0,T;
            \call_2
            (l^2, H))} =0,
           $$
           which along with 
         a contradiction argument gives (ii)
         and thus completes the proof.
 \end{proof}

 In order to apply Theorem \ref{main}
 to establish the existence of
 martingale solutions  to 
 \eqref{rde1}-\eqref{rde3},  
define an operator  $A_1: V_1 \to V_1^*$ 
 by
 $$
 (A_1v, \  u)_{(V_1^*, V_1)}
 =- ((-\Delta)^{\frac s2} v,
 \ (-\Delta)^{\frac s2} u)_{H},
 \quad \forall \ v, u\in V_1.
 $$
 Then  we have:
\be\label{fA1}
A_1:  V_1 \to V_1^*
\ \text{  is continuous}.
\ee
 Let  $A_2(t) $ be 
 the Nemytskii
 operator associated with $f(t, \cdot, \cdot)$
 as given by 
 $A_2(t, v) (x)  = f(t, x, v(x))$ for all
 $v\in V_2$ and $x\in \R^n$.
Then by   \eqref{f3}  we infer that for $t\in [0,T]$,
\be\label{fA2}
A_2(t, \cdot) :
V_2 \to V_2^* \ \text{ is continuous}.
\ee
 Similarly,
  let $A_3(t) $ be the Nemytskii
 operator associated with $h(t, \cdot, \cdot)$
 as given by 
 $A_3(t, v) (x)  = h(t, x, v(x))$ for all
 $v\in V_3 $ and $x\in \R^n$.
Then by   \eqref{h1}  we infer that for $t\in [0,T]$,
\be\label{hA3}
A_3(t, \cdot) :
V_3 \to V_3^* \ \text{ is continuous}.
\ee
 Let $A(t): V \to V^*$ be  the 
 operator given by
 $A(t)=A_1(t) +A_2(t) +A_3(t)$
 for $t\in [0,T]$.

 With above notation,
   the stochastic equation \eqref{rde1}
 can be reformulated  as: 
$$
  du (t)
   = A(t, u(t)) dt  
   +  B  (t, u(t))    {dW}.
$$
  
  We are now ready to present the main result of this section.
  
  \begin{thm}\label{main_rde}
 If  \eqref{f1}-\eqref{sig4} hold,
 then for every $u_0 \in H$,
 system \eqref{rde1}-\eqref{rde3}
 has at least one martingale solution
 which satisfies the uniform estimates
 as given by \eqref{main 1}.
  \end{thm}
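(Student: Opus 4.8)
The plan is to recast \eqref{rde1}--\eqref{rde3} in the abstract form \eqref{sde1}--\eqref{sde2}, using the Gelfand structure already fixed above ($H$, $V_1$, $V_2$, $V_3=H$, $V=V_1\cap V_2$), the cylindrical Wiener process on $U=l^2$, the drift $A=A_1+A_2+A_3$ and the diffusion $B$ of \eqref{sig4a}, and then to verify that {\bf (H1)}, {\bf (H2)}$^\prime$ and {\bf (H3)}--{\bf (H5)} hold, so that Theorem \ref{main} applies with $x=u_0\in H$ and delivers both a martingale solution and the moment bound \eqref{main 1}. The natural exponents are $q_1=q_3=2$ and $q_2=p$. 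Throughout, a single $g\in L^1([0,T])$ will be chosen large enough to absorb all the time-dependent data, namely $\|\varphi_1(t)\|_{L^1}$, $\|\varphi_2(t)\|_{L^{p/(p-1)}}^{p/(p-1)}$, $\|\varphi_3(t)\|_{L^\infty}$ and $\sum_i\|\sigma_{1,i}(t)\|_{L^2}^2$.

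First I would dispatch the ``algebraic'' hypotheses. Hemicontinuity {\bf (H1)} is trivial for the linear $A_1$ and, for the Nemytskii operators $A_2,A_3$, follows from the continuity of $f,h$ and the growth bounds \eqref{f3}, \eqref{h1} by dominated convergence, the required continuity in the dual spaces being \eqref{fA1}--\eqref{hA3}. For {\bf (H2)}$^\prime$ I would combine three facts: $A_1$ is negative, $2(A_1u-A_1v,u-v)_{(V_1^*,V_1)}=-2\|(-\Delta)^{s/2}(u-v)\|_H^2\le0$; the monotonicity of $f$ in its last argument gives $2(A_2(t,u)-A_2(t,v),u-v)_H\le0$; and \eqref{h1} gives $2(A_3(t,u)-A_3(t,v),u-v)_H\le 2\|\varphi_3(t)\|_{L^\infty}\|u-v\|_H^2$. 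Hence {\bf (H2)}$^\prime$ holds with $h_R(t)=2\|\varphi_3(t)\|_{L^\infty}\in L^1([0,T])$, uniformly in $R$. The drift growth {\bf (H4)} follows with $\beta=0$: boundedness of $A_1$ gives $\|A_1v\|_{V_1^*}^2\le c\|v\|_{V_1}^2$; \eqref{f3} gives $\|A_2(t,v)\|_{V_2^*}^{p/(p-1)}=\int_\o|f(t,x,v)|^{p/(p-1)}dx\le c\|v\|_{V_2}^p+g(t)$; and \eqref{h1} controls $\|A_3(t,v)\|_{V_3^*}$ by $\|\varphi_3(t)\|_{L^\infty}\|v\|_H$.

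The heart of the verification is coercivity {\bf (H3)} and the diffusion conditions {\bf (H5)}. For {\bf (H3)} the key observation is that $A_1$ alone supplies two coercive terms: splitting $2(A_1v,v)=-2\|(-\Delta)^{s/2}v\|_H^2$ in half and using, on one half, the equivalence of $\|(-\Delta)^{s/2}\cdot\|_{L^2}$ with $\|\cdot\|_{V_1}$ and, on the other half, the fractional Poincar\'e inequality $\|(-\Delta)^{s/2}v\|_H^2\ge\lambda_1\|v\|_H^2$ on the bounded domain $\o$, one gets $2(A_1v,v)\le -c\|v\|_{V_1}^2-\lambda_1\|v\|_H^2$; the second term furnishes exactly the missing $V_3=H$ coercivity. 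Next, \eqref{f2} yields $2(A_2(t,v),v)_H\le -2\delta_1\|v\|_{V_2}^p+g(t)$, while the contributions of $A_3$ and of the bound \eqref{sig4b} for $\|B(t,v)\|_{\call_2(l^2,H)}^2$ involve only $\|v\|_H^2$ and $\|v\|_{V_2}^q$ with $q<p$; the latter is absorbed into $\delta_1\|v\|_{V_2}^p$ by Young's inequality, leaving {\bf (H3)} with $\alpha_2=\min\{c,\delta_1,\lambda_1\}$. Condition \eqref{h5a} of {\bf (H5)} is then read off from \eqref{sig4b} with $\hat q_2=q<p=q_2$ (only the $V_2$-term is present, the $\sigma_{1,i}$- and $\beta_i$-parts going into $g$), so $B$ is superlinear yet subcritical.

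The main obstacle will be the continuity requirements \eqref{h5b} and \eqref{h5c} for $B$, which are precisely the assertions \eqref{rde1 1} and \eqref{rde1 2} of the preceding lemma. These are not Lipschitz estimates but mere continuity in $H$ (respectively $L^2(0,T;H)$) along sequences bounded in $V_2$ (respectively $L^p(0,T;V_2)$), and their proof rests on the strict subcriticality $q<p$: this makes $\{|v_n|^q\}$ uniformly integrable and allows a generalized dominated convergence theorem to pass to the limit term by term in the Hilbert--Schmidt norm, with the tail in the index $i$ controlled uniformly by \eqref{sig4}. Granting these, all hypotheses of Theorem \ref{main} are satisfied with $q_1=q_3=2$, $q_2=p$, and the existence of a martingale solution of \eqref{rde1}--\eqref{rde3} together with the moment bound \eqref{main 1} follows at once. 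I expect the only delicate bookkeeping to be the choice of a single $g\in L^1([0,T])$ simultaneously compatible with {\bf (H2)}$^\prime$--{\bf (H5)} and the interplay of the two coercive effects of $A_1$ that release the $V_3=H$ term.
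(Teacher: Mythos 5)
Your proposal is correct and follows essentially the same route as the paper: recast \eqref{rde1}--\eqref{rde3} in the abstract framework with $q_1=q_3=2$, $q_2=p$, verify {\bf (H1)}, {\bf (H2)}$^\prime$, {\bf (H3)}--{\bf (H5)} from \eqref{f1}--\eqref{sig4b} together with the continuity lemma for $B$, and invoke Theorem \ref{main}. The only cosmetic difference is in {\bf (H3)}: you extract the $V_3=H$ coercive term from $A_1$ via a fractional Poincar\'e inequality, whereas the paper simply adds and subtracts $\alpha_2\|v\|_H^2$ and absorbs it into the $g(t)(1+\|v\|_H^2)$ term, which is harmless precisely because $V_3=H$.
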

  
  \begin{proof}
  We will show that  all assumptions 
  of Theorem \ref{main} are satisfied.
  Note that the  hemicontinuity of $A$
  follows from \eqref{fA1}-\eqref{hA3},
  and thus {\bf (H1)} is fulfilled.
  
  On the other hand,
  by \eqref{f1} and \eqref{h1} we get,
    for $ t\in [0,T]$ and $u, v\in V$,
    $$
    2( A(t,u) -A(t,v),
    u-v)_{(V^*, V)}
    \le\| \varphi_3(t)\|_{L^\infty(\R^n)}
    \| u-v \|_H^2,
    $$
    where $\varphi_3\in L^1(0,T;
    L^\infty(\R^n))$, and hence
    {\bf (H2)}$^\prime$ is satisfied.
    By \eqref{f2},
    \eqref{h1},
      \eqref{sig4b}
      and Young's inequality,
       we get for $t\in [0,T]$  and $v\in V$,
       $$
       2( A(t,v), v)_{(V^*,V)}
       + \| B(t, v)\|^2_{\call_2(l^2, H)}
       \le
       -2 \| v\|^2_{V_1}
       -\delta_1 \| v\|^p_{V_2}
       $$
       \be\label{main_rde p1}
       +2 \|\varphi_1 (t)\|_{L^1(\R^n)}
       +2
   \|\varphi_3 (t)\|_{L^\infty(\R^n)}\|v\|^2_H
   +2\sum_{i=1}^\infty
   \|\sigma_{1,i}
   (t) \|^2_{L^2(\R^n)} + c_1,
 \ee
  where $c_1>0$ is  a constant
  depending on $\{\beta_i\}_{i=1}^\infty$
  and $\{\gamma_i\}_{i=1}^\infty$.
  Let $\alpha_2= \min\{ 2, \delta_1\}$. By
  \eqref{main_rde p1} we get
    for $t\in [0,T]$  and $v\in V$,
       $$
       2( A(t,v), v)_{(V^*,V)}
       + \| B(t, v)\|^2_{\call_2(l^2, H)}
       \le -\alpha_2
       \left (
       \| v\|_{V_1}^2
       +\| v\|_{V_2}^p
       +\| v\|_{V_3}^2
       \right )
       $$
             \be\label{main_rde p2}
             + \left (2
   \|\varphi_3 (t)\|_{L^\infty(\R^n)} 
   +\alpha_2
   \right )\|v\|^2_H
   +
         2 \|\varphi_1 (t)\|_{L^1(\R^n)}
    +2\sum_{i=1}^\infty
   \|\sigma_{1,i}
   (t) \|^2_{L^2(\R^n)} + c_1.
 \ee
 Since  $\varphi_1\in L^1(0,T;
    L^1(\R^n))$ and
   $\varphi_3\in L^1(0,T;
    L^\infty(\R^n))$,
   by \eqref{sig2}
   and \eqref{main_rde p2} we
   obtain
   {\bf (H3)}.
   
   Note that for $t\in [0,T]$
   and $v\in V$,
   \be\label{main_rde p3}
   \| A_1 (t,v)\|^2_{V_1^*}
   \le \| v \|^2_{V_1}.
   \ee
   On the other hand, by \eqref{f3} we get
   for $t\in [0,T]$
   and $v\in V$,
      \be\label{main_rde p4}
   \| A_2 (t,v)\|^{\frac p{p-1}}_{V_2^*}
   \le (2\delta_2)^{\frac p{p-1}}
   \| v\|^p_{V_2}
   + 2^{\frac p{p-1}}
   \int_{\R^n}
   |\varphi_2 (t,x)|^{\frac p{p-1}} dx,
   \ee
   where $\varphi_2 \in L^{\frac p{p-1}}
   ([0,T] \times \R^n)$.
   By \eqref{h1} we  have
    for $t\in [0,T]$
   and $v\in V$,
     \be\label{main_rde p5}
   \| A_3 (t,v)\|^2_{V_3^*}
   \le  \| \varphi_3 (t)\|_{L^\infty (\R^n)}
   \| v\|^2_{V_3}.
   \ee
   Then {\bf (H4)} follows from
   \eqref{main_rde p3}-\eqref{main_rde p5}.
   Finally, by \eqref{sig4b}-\eqref{rde1 2} we  get
    {\bf (H5)}, and hence by
    Theorem \ref{main}, we conclude that
    system \eqref{rde1}-\eqref{rde3} has 
    at least one  martingale
    solution. This completes the proof.
   \end{proof}

  In order to obtain the uniqueness of solutions
  of \eqref{rde1}-\eqref{rde3}, we need to
  impose
  additional  restrictions on the nonlinear terms.
  In what follows, we further assume    that 
    $f$ satisfies:
  for all $t, u_1, u_2\in \R$ and $x\in \R^n$,
$$
  \left (
  f(t,x, u_1) -f(t,x, u_2)\right )
  (u_1-u_2)
  $$
     \be\label{ff1}
  \le -\delta_3 \left ( |u_1|^{p-2}
  + |u_2|^{p-2} \right ) (u_1 -u_2)^2
  + \varphi_4 (t,x) (u_1 -u_2)^2,
  \ee
  where $\delta_3>0$ is a constant
  and $\varphi_4
  \in L^1(0,T; L^\infty(\R^n))$.
  In addition,
  we also assume that for every $i\in \N$,
  $\sigma_{2,i}$
     satisfies:
  for all $u_1, u_2\in \R$,
  \be\label{ff2}
 | \sigma_{2,i} (u_1) -  \sigma_{2,i} (u_2)  |^2
 \le \alpha_i \left (
 1+ |u_1|^{q-2} + |u_2|^{q-2}
 \right ) |u_1-u_2|^2,
 \ee 
  where $\alpha_i>0$ is a constant
  such that $\sum_{i=1}^\infty \alpha_i <\infty$.
   
  The following result is concerned with
the uniqueness of solutions of \eqref{rde1}-\eqref{rde3}
under \eqref{ff1}-\eqref{ff2}.

  \begin{thm}\label{main_rde1}
 If  \eqref{f1}-\eqref{sig4} 
 and \eqref{ff1}-\eqref{ff2} hold,
 then the martingale solutions of 
  \eqref{rde1}-\eqref{rde3} are pathwise unique,
  and hence the system has a unique solution 
  in the sense of Definition \ref{dsol}.
  \end{thm}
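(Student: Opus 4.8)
The plan is to invoke the uniqueness clause of Theorem~\ref{main}, which guarantees pathwise uniqueness as soon as the local monotonicity condition {\bf (H2)} holds. Since the existence of a martingale solution satisfying \eqref{main 1} is already furnished by Theorem~\ref{main_rde}, it remains only to verify that, under the additional hypotheses \eqref{ff1}--\eqref{ff2}, the operators $A=A_1+A_2+A_3$ and $B$ satisfy {\bf (H2)} (indeed with $\varphi\equiv\psi\equiv0$, which is stronger than \eqref{h2b} demands). Once {\bf (H2)} is in place, the pathwise uniqueness is immediate from Theorem~\ref{main}, and combined with existence it gives a unique solution in the sense of Definition~\ref{dsol}.

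First I would estimate $2(A(t,u)-A(t,v),u-v)_{(V^*,V)}$ term by term. The fractional Laplacian contributes $(A_1u-A_1v,u-v)_{(V_1^*,V_1)}=-\|(-\Delta)^{s/2}(u-v)\|_H^2\le0$, which may simply be discarded. The Lipschitz term $A_3$ gives, by \eqref{h1}, the bound $\|\varphi_3(t)\|_{L^\infty(\R^n)}\|u-v\|_H^2$. The decisive monotone term $A_2$ is controlled by \eqref{ff1}, yielding
\[
(A_2(t,u)-A_2(t,v),u-v)_H\le-\delta_3\int_\o\bigl(|u|^{p-2}+|v|^{p-2}\bigr)(u-v)^2\,dx+\|\varphi_4(t)\|_{L^\infty(\R^n)}\|u-v\|_H^2,
\]
so that $A_2$ supplies a coercive \emph{negative} term carrying the exponent $p-2$. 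For the diffusion, the $\sigma_{1,i}$ part cancels in the difference, and \eqref{ff2} gives
\[
\|B(t,u)-B(t,v)\|_{\call_2(l^2,H)}^2\le\Bigl(\sum_{i=1}^\infty\alpha_i\Bigr)\Bigl(\|u-v\|_H^2+\int_\o\bigl(|u|^{q-2}+|v|^{q-2}\bigr)(u-v)^2\,dx\Bigr).
\]

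The heart of the argument — and the step I expect to be the main obstacle — is to absorb the superlinear diffusion remainder $\int_\o(|u|^{q-2}+|v|^{q-2})(u-v)^2\,dx$ into the dissipation produced by $A_2$. This is possible precisely because $2\le q<p$: the scalar Young inequality furnishes, for every $\eps>0$, a constant $C_\eps>0$ with $|r|^{q-2}\le\eps|r|^{p-2}+C_\eps$ for all $r\in\R$, whence
\[
\int_\o\bigl(|u|^{q-2}+|v|^{q-2}\bigr)(u-v)^2\,dx\le\eps\int_\o\bigl(|u|^{p-2}+|v|^{p-2}\bigr)(u-v)^2\,dx+2C_\eps\|u-v\|_H^2.
\]
Choosing $\eps$ so small that $\eps\sum_i\alpha_i\le2\delta_3$ makes the superlinear terms cancel, and collecting the remaining contributions gives
\[
2(A(t,u)-A(t,v),u-v)_{(V^*,V)}+\|B(t,u)-B(t,v)\|_{\call_2(l^2,H)}^2\le g(t)\|u-v\|_H^2,
\]
with $g(t)=2\|\varphi_4(t)\|_{L^\infty(\R^n)}+2\|\varphi_3(t)\|_{L^\infty(\R^n)}+(1+2C_\eps)\sum_i\alpha_i$. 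Since $\varphi_3,\varphi_4\in L^1(0,T;L^\infty(\R^n))$ and $\sum_i\alpha_i<\infty$, we have $g\in L^1(0,T)$, so {\bf (H2)} holds.

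I would emphasize that the genuine difficulty here is conceptual rather than computational: assumption \eqref{ff2} permits the noise coefficients to have a superlinear modulus of continuity (of order $q-2$), which cannot be dominated by $\|u-v\|_H^2$ alone. Uniqueness survives only because the monotone reaction $f$ injects, via \eqref{ff1}, a dissipative term of the strictly higher order $p-2$ that can swallow it; the strict inequality $q<p$ is exactly what makes the Young-inequality absorption work, and it is the same structural gap that drives the existence proof of Theorem~\ref{main_rde}.
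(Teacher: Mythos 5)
Your proposal is correct and follows essentially the same route as the paper: the paper likewise verifies \textbf{(H2)} by using Young's inequality (valid precisely because $q<p$) to absorb the superlinear remainder $\sum_i\alpha_i\int_\o(|u|^{q-2}+|v|^{q-2})(u-v)^2\,dx$ from \eqref{ff2} into the dissipative term $-2\delta_3\int_\o(|u|^{p-2}+|v|^{p-2})(u-v)^2\,dx$ supplied by \eqref{ff1}, and then invokes the uniqueness clause of Theorem \ref{main}. Your identification of the $q<p$ absorption as the crux matches the paper's argument exactly.
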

  
  \begin{proof}
  We only need to check condition 
  {\bf (H2)}  in Theorem \ref{main}.
  By \eqref{sig4a}-\eqref{ff2} 
  and Young's inequality we get that
  for all 
 $u, v\in V_2$
and $t\in [0,T]$,
$$
 \|B(t,  u)-
B (t, v)\|^2_{\call_2(l^2, H) }
 \le   \sum_{i=1}^\infty \alpha_i 
 \int_{\o } 
   \big( 1+ |u(x)|^{q-2}  
+|v(x)|^{q-2}
\big)|u(x)-v(x)|^2dx
 $$
 \be\label{main-rde1 p1}
 \le
c_1
 \|u- v\|^2_H
 + \delta_3 
 \int_{\o} 
   \big(   |u(x)|^{p-2}  
+|v(x)|^{p-2}
\big)|u(x)-v(x)|^2dx,
\ee
 where
  $c_1>0$ is a constant depending   only
  on  $p, q, \delta_3$  and  $\sum_{i=1}^\infty
  {\alpha_i}$.
  By \eqref{h1}, \eqref{ff1} and
  \eqref{main-rde1 p1} we have
   for all 
 $u, v\in V_2$
and $t\in [0,T]$,
  $$
  2(A(t, u)-A(t,v),
  u-v)_{(V^*, V)}
  + \|B(t,  u)-
B (t, v)\|^2_{\call_2(l^2, H) }
$$
$$
\le
-2 \delta_3
  \int_{\o} 
   \big(   |u(x)|^{p-2}  
+|v(x)|^{p-2}
\big)|u(x)-v(x)|^2dx
+ 2 \int_\o
\left (\varphi_3 (t,x)
+\varphi_4 (t,x)
\right )|u(x)-v(x)|^2 dx
$$
$$
+ \|B(t,  u)-
B (t, v)\|^2_{\call_2(l^2, H) }
$$
$$
\le
\left (
c_1
+ \|  \varphi_3 (t)\|_{L^\infty (\R^n)}
+ \|  \varphi_4 (t)\|_{L^\infty (\R^n)}
\right ) \| u-v\|_H^2,
$$
which implies
{\bf (H2)} due to
the fact that
$\varphi_3, \varphi_4
\in L^1(0,T; L^\infty(\R^n))$.
Then the  pathwise uniqueness of solutions
follows from Theorem \ref{main},
that  completes the proof.
  \end{proof}

  \section{Appendix}

For reader's convenience,  we
  recall the following
Skorokhod-Jakubowski representation theorem
from \cite{brz1, jak2} on a topological space instead of
metric space.

\begin{prop}
\label{prop_sj}
 Suppose  $X$ 
 is  a topological space, and there
  exists a sequence
of continuous functions $g_n: X\rightarrow \mathbb{R}$ that separates points of $X$.
If    $\{\mu_n\}_{n=1}^\infty$
is a tight sequence of
   probability measures
  on $(X, \mathcal{B} (X) )$, then
  there exists a subsequence
  $\{\mu_{n_k}\}_{k=1}^\infty$
  of  $\{\mu_n\}_{n=1}^\infty$,
    a probability space $(
  \widetilde{
  \Omega},
  \widetilde{\mathcal{F}},
  \widetilde{\mathrm{P}})$, $X$-valued random variables
   $v_{ k}$  and $v$    such that
   the law of $v_{ k}$ is
   $\mu_{n_k}$ for all $k\in \mathbb{N}$ and
      $v_{ k} \rightarrow v$  $\widetilde{
  \mathrm{P}}$-almost surely
   as $k \rightarrow \infty$.
\end{prop}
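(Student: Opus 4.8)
The plan is to reduce this non-metrizable statement to the classical Skorokhod representation theorem on a Polish space by exploiting the separating sequence. First I would normalize the separating functions: composing each $g_n$ with a fixed homeomorphism $\theta:\mathbb{R}\to(-1,1)$, say $\theta(t)=t/(1+|t|)$, produces bounded continuous maps $\tilde g_n=\theta\circ g_n$ that still separate points, and I assemble them into a single map $G=(\tilde g_n)_{n\geq 1}: X\to\mathcal{K}:=[-1,1]^{\mathbb{N}}$. Since $\mathcal{K}$ carries the product topology it is a compact metrizable, hence Polish, space; because the $g_n$ separate points $G$ is injective, and it is continuous into the product topology by construction. I note that the mere existence of a separating family of continuous real functions already forces $X$ to be Hausdorff, a fact I use below.

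Next I push the measures forward, setting $\nu_n=\mu_n\circ G^{-1}$, a Borel probability measure on $\mathcal{K}$. Tightness transfers: if $\{K_m\}$ is an increasing family of compacts in $X$ with $\sup_n\mu_n(X\setminus K_m)\leq 1/m$, then each $G(K_m)$ is compact in $\mathcal{K}$ and $\nu_n(G(K_m))\geq\mu_n(K_m)\geq 1-1/m$ for all $n$, so $\{\nu_n\}$ is tight on $\mathcal{K}$. By Prokhorov's theorem together with the classical Skorokhod representation theorem I extract a subsequence $\{\nu_{n_k}\}$, which converges weakly to some $\nu$, and build on a probability space $(\widetilde\Omega,\widetilde{\mathcal F},\widetilde{\mathrm P})$ the $\mathcal{K}$-valued random variables $\eta_k$ and $\eta$ with laws $\nu_{n_k}$ and $\nu$, satisfying $\eta_k\to\eta$ $\widetilde{\mathrm P}$-almost surely.

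The core of the argument is to pull everything back from $\mathcal{K}$ to $X$. Writing $C=\bigcup_m G(K_m)\subseteq G(X)$, each $G(K_m)$ is closed (compact in a Hausdorff space), so the Portmanteau theorem gives $\nu(G(K_m))\geq\limsup_k\nu_{n_k}(G(K_m))\geq 1-1/m$, whence $\eta\in C$ almost surely; the bound $\nu_{n_k}(C)=1$ likewise puts every $\eta_k$ in $C$ almost surely. Since $G$ is a continuous injection of the compact set $K_m$ into the Hausdorff space $\mathcal{K}$, its restriction $G|_{K_m}:K_m\to G(K_m)$ is a homeomorphism, so $G^{-1}$ is well defined and Borel measurable on the $\sigma$-compact set $C$. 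I then set $v_k=G^{-1}(\eta_k)$ and $v=G^{-1}(\eta)$; since $G(v_k)=\eta_k$ has law $\nu_{n_k}=\mu_{n_k}\circ G^{-1}$ and $G^{-1}\circ G=\mathrm{id}$ on $\bigcup_m K_m$, a set of full $\mu_{n_k}$-measure, one checks that $v_k$ has law $\mu_{n_k}$ and that $v$ is a genuine $X$-valued random variable.

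The main obstacle is upgrading the almost sure convergence $\eta_k\to\eta$ in $\mathcal{K}$ to almost sure convergence $v_k\to v$ in the possibly finer topology $\tau$ of $X$. Componentwise convergence $g_n(v_k)\to g_n(v)$ only yields convergence in the initial topology generated by $\{g_n\}$, which may be strictly coarser than $\tau$. The decisive observation is that on each compact $K_m$ the homeomorphism property of $G|_{K_m}$ makes $\tau$ and the initial topology agree, so convergence in $\tau$ follows once the convergent sequence $\{v_k\}\cup\{v\}$ is localized inside a $\tau$-compact set. Carrying out this localization onto the tightness compacts, which is precisely the technical heart of Jakubowski's theorem in \cite{jak2} (see also \cite{brz1}), is the step demanding the most care, and it is where the tightness hypothesis — rather than just the embedding $G$ — is genuinely indispensable.
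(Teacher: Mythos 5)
Your soft steps are all correct: the embedding $G=(\tilde g_n)\colon X\to\mathcal{K}$, the transfer of tightness, the Portmanteau bound putting $\eta$ in $C=\bigcup_m G(K_m)$, the Borel measurability of $G^{-1}$ on the $\sigma$-compact set $C$ via the homeomorphisms $G|_{K_m}$, and the identification of the laws of $v_k=G^{-1}(\eta_k)$. But the step you defer to \cite{jak2} is not a technicality left to be checked inside your framework; it is unprovable from what precedes it, because the full conclusion of the classical Skorokhod theorem on $\mathcal{K}$ (correct laws plus $\eta_k\to\eta$ a.s.\ in $\mathcal{K}$, with $\eta_k,\eta\in C$ a.s.) is compatible with $v_k\not\to v$ in $X$ almost surely. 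Concretely, take $X=\ell^2$ with the norm topology, $g_n$ the coordinate functionals, and $\mu_k=(1-\frac1k)\delta_0+\frac1k\,\delta_{e_k}$; this sequence is tight, since $K_m=\{0\}\cup\{e_1,\dots,e_m\}$ is norm compact and $\mu_k(K_m)\ge 1-\frac1m$ for every $k$, and $\nu_k=\mu_k\circ G^{-1}\to\delta_{G(0)}$ weakly on $\mathcal{K}$. On $([0,1],\lambda)$ choose independent events $A_k$ with $\lambda(A_k)=\frac1k$ and set $\eta_k=G(e_k)\mathbf{1}_{A_k}+G(0)\mathbf{1}_{A_k^c}$ and $\eta=G(0)$. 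Then $\eta_k$ has law $\nu_k$, and $\eta_k(\omega)\to\eta(\omega)$ in $\mathcal{K}$ for \emph{every} $\omega$ (coordinatewise, since the $n$-th coordinate of $G(e_k)$ vanishes once $k>n$), so this family is a perfectly admissible output of the classical theorem; yet by the second Borel--Cantelli lemma, for a.e.\ $\omega$ one has $v_k(\omega)=e_k$ for infinitely many $k$, hence $\|v_k-v\|_{\ell^2}=1$ infinitely often and $v_k\not\to v$ in $X$ almost surely. The underlying reason is the one you half-identify: the tightness bounds $\widetilde{\mathrm{P}}(\eta_k\notin G(K_m))\le\frac1m$ are uniform in $k$ but not summable in $k$, so no Borel--Cantelli argument confines a given realization to a fixed compact, and $G^{-1}$ is Borel but not sequentially continuous on $C$. (The freedom to pass to a further subsequence does not rescue the argument by itself: nothing in your setup couples the choice of subsequence to the realized paths.)

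So the architecture ``apply the classical theorem as a black box, then pull back'' cannot be completed; the almost sure convergence in $X$ must be engineered into the construction of the representation rather than extracted afterwards. That is what Jakubowski actually does in \cite{jak2}: by a diagonal procedure he takes weak limits of the measures restricted to each compact layer, builds the random variables by hand on $([0,1],\mathcal{B}([0,1]),\lambda)$ along the sets $G(K_m)$, and arranges the coupling so that the probabilities of leaving the $m$-th layer are summable along the chosen subsequence; Borel--Cantelli then localizes the sequence eventually in a fixed $G(K_m)$, where your (correct) observation that $G|_{K_m}$ is a homeomorphism finishes the proof. Note also that the paper itself does not prove Proposition \ref{prop_sj} at all; it explicitly recalls it from \cite{brz1,jak2}. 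Your write-up, which defers exactly the decisive localization to \cite{jak2}, is therefore a citation in expanded form rather than a proof: everything you add beyond the citation is correct, but it is the routine part, and the one step you leave out is the theorem.
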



\begin{thebibliography}{99}
\bibitem{abe1}
S. Abe and S. Thurner,
Anomalous diffusion  in view of Einsteins 1905 theory of Brownian
motion, {\em Physica A},
{\bf 356} (2005), 403-407.

  \bibitem{ald1}
  D.  Aldous,
   Stopping times and tightness,
   {\em   Annals of Probability},
   {\bf  6} (1978), 335-340.
   
   
     \bibitem{ben1}
   A. Bensoussan. Stochastic Navier-Stokes
   equations,
   {\em Acta Appl. Math.},
   {\bf 38} (1995),  267-304.
   
  
   
 \bibitem{brz1} 
 Z.  Brze\'{z}niak
 and  L.  Motyl,
  Existence of a martingale solution of the stochastic
Navier-Stokes equations in unbounded 2D and 3D domains. 
{\em Journal of  Differential Equations},
{\bf  254} (2013), 1627-1685.


\bibitem{caff1}
L. Caffarelli, J. Roquejoffre and Y. Sire,
Variational problems for free boundaries
for the fractional Laplacian,
{\em Journal of the European Mathematical Society},
{\bf 12} (2010), 1151-1179.


  
   \bibitem{deb1}
A. Debussche, N. Glatt-Holtz
and
 R. Temam,
  Local martingale and pathwise solutions for an abstract fluids model, {\em Physica  D: Nonlinear Phenomena},
  {\bf  240}  (2011), 1123-1144.

 \bibitem{dine1}
  E.  Di Nezza, G.  Palatucci and
  E.  Valdinoci,
   Hitchhiker's  guide to the fractional Sobolev
    spaces,
    {\em  Bull. Sci. Math.},
    {\bf  136}  (2012), 
    521-573.

 \bibitem{fla1}
F.  Flandoli  and D.   Gatarek,
Martingale and stationary solutions for stochastic 
Navier-Stokes equations,
{\em  Probability Theory and Related Fields},
{\bf 
102} (1995),  367-391.



\bibitem{gal1}
C. Gal and M. Warma,
Reaction-diffusion equations
with fractional diffusion on non-smooth
domains with various boundary conditions,
{\em Discrete and Continuous Dynamical Systems},
{\bf 36}  (2016),
1279-1319.


\bibitem{garr1}
A. Garroni  and S. Muller,
A variational model for dislocations
in the line tension limit,
{\em Archive for Rational Mechanics and Analysis},
{\bf  181} (2006), 535-578.

\bibitem{gol1}
B. Goldys, M.  Rockner and X. Zhang,
 Martingale solutions and Markov selections for stochastic partial
 differential equations,
 {\em   Stochastic Processes and Their Applications},
 {\bf  119}  (2009),   1725-1764.
 
 \bibitem{gu1}
A. Gu,  D. Li, B.  Wang and H.  Yang, 
Regularity of random attractors for fractional stochastic 
reaction-diffusion equations on 
$\R^n$, 
{\em Journal of  Differential Equations}, 
{\bf 264}  (2018), 7094-7137. 
 
 
 
  \bibitem{guan2}
 Q. Guan and Z. Ma,
 Reflected symmetric $\alpha$-stable processes and regional fractional
 Laplacian,
 {\em  Probability Theory and Related Fields},
 {\bf 134} (2006), 649-694.
 

\bibitem{hof1}
 M. Hofmanova and J. Seidler,
 On weak solutions of stochastic differential equations II,
 {\em  Stochastic Analysis and Applications}
 {\bf  31} (2013), , 663-670.
 

\bibitem{jara1}
M. Jara, Nonequilibrium scaling limit for a tagged particle
in the simple exclusion process with long jumps,
{\em  Communications on Pure and Applied Mathematics},  
{\bf 62}  (2009), 198-214.


 \bibitem{jak1}
 A. Jakubowski, 
  On the Skorokhod topology,
  {\em  Ann. Inst. H. Poincare  Probability
  and  
  Statistics}, {\bf  22}  (1986), 263-285.
  
 \bibitem{jak2}
A. Jakubowski,
The almost sure Skorokhod
representation 
for subsequences in nonmetric spaces,
{\em  Theory of Probability and Its Applications}
{\bf  42} (1988),  167-175.
 
 
 \bibitem{kos1}
 M. Koslowski, A. Cuitino and M. Ortiz,
 A phasefield theory of dislocation dynamics,
 strain hardening and hysteresis in ductile single
 crystal,
 {\em J. Mech. Phys. Solids},
 {\em 50} (2002),
 2597-2635.
 
 
 \bibitem{kry1}
  N.V. Krylov   and B.L. Rozovskii,
  Stochastic evolution equations,
  {\em Journal of Soviet Mathematics},
  {\bf  16}   (1981),
  1233-1277.
  
   \bibitem{liu1}
   W. Liu and M. Rockner, 
   {\em Stochastic Partial Differential Equations:
   An Introduction},
   Springer, Berlin, 2015.
   
      \bibitem{liu2}
      W. Liu  and M.
   Rockner,
   Local and global 
   well-posedness
   of
   SPDE
   with
   generalized
   coercivity
   conditions.
   {\em 
  Journal of  Differential Equations},
 {\bf  254} (2013),  725-755.
   
\bibitem{luh1}
H. Lu, P. W. Bates, S.  Lu and M. Zhang,
Dynamics of 3D fractional complex
Ginzburg-Landau equation,
{\em Journal of  Differential Equations}, {\bf 259}
(2015), 5276-5301.



\bibitem{luh2}
H. Lu,  P. W.  Bates,  J. Xin  and M. Zhang,
Asymptotic behavior of stochastic  fractional
power dissipative equations on $\R^n$,
{\em Nonlinear Analysis TMA}, 
{\bf 128}  (2015),
176-198.

   \bibitem{mar1}
   C.  Marinelli,
   On well-posedness of semilinear
   stochastic evolution equations
   on  $L_p$ spaces,
   {\em SIAM Journal on  Mathematical Analysis},
   {\bf 50} (2018),
    2111-2143.
 
   
  
     \bibitem{ngu1}
 P. Nguyen,  K.  Tawri  and R.
  Temam,  Nonlinear stochastic parabolic partial differential equations with a
 monotone operator of the Ladyzenskaya-Smagorinsky
  type  driven by a Levy noise,
  {\em  Journal of Functional Analysis},
  {\bf 
 281}  (2021), 109157.
 
 \bibitem{ond1}
 M.  Ondrejat and  J.  Seidler,
 A counterexample to the
 strong Skorokhod  representation
 theorem,
 {\em Stochastics and Partial Differential Equations:
  Analysis and Computations},
  DOI: 10.1007/s40072-025-00357-0,  2025.
 
    \bibitem{par1}
  E. Pardoux, 
  Equations aux derivees partielles stochastiques non lineaires monotones,
   Ph.D. thesis,
 Universite  Paris XI, 1975.
 
     \bibitem{pec1}
 G. Peccati,
 Weak convergence to Ocone martingales:
 a remark,
 {\em Electronic Communications in Probability},
 {\bf  9}  (2004),  172-174.
 
 
    
 
 
 
   \bibitem{roc1}
        M. Rockner, S. Shang and T. Zhang,
          Well-posedness of stochastic partial differential equations
 with fully local monotone coefficients,
 {\em Mathematische Annalen},
 {\bf 390}   (2024), 3419-3469.
 
 
 \bibitem{ros1}
 X. Ros-Oton  and J. Serra,
  The Dirichlet problem for the fractional Laplacian:
   regularity up to the boundary,
   {\em Journal  de  Mathematiques Pures et Appliquees},
   {\bf 101} (2014),   275-302.
   
      \bibitem{sal1}
        M. Salins,
       Systems of small-noise stochastic 
       reaction-diffusion
 equations satisfy a large deviations principle that is
 uniform over all initial data,
  {\em
  Stochastic Processes and their Applications},
  {\bf  142}  (2021),  159-194.
  
 
  
    \bibitem{sal2}
    M. Salins,
    Global solutions for the stochastic reaction-diffusion
 equation with super-linear multiplicative noise and
 strong dissipativity,
 {\em  Electronic Journal of Probability},
  {\bf 27}  (2022),  article no. 12, 1-17.
   
     \bibitem{ser2}
 R.  Servadei  and E.  Valdinoci,
    Variational methods for non-local 
    operators of elliptic type,
    {\em Discrete and Continuous Dynamical Systems},
    {\bf  33}   (2013), 
    2105-2137.  
   
   
  \bibitem{val1}
  G. Vallet and A.
  Zimmermann, 
   Well-posedness for a pseudomonotone 
   evolution problem with 
   multiplicative noise,
   {\em  Journal of Evolution Equations},
   {\bf  19} (2019),
    153-202.
    
     
    
   
\bibitem{wan1}
B.  Wang,
 Asymptotic behavior of non-autonomous
 fractional stochastic reaction-diffusion equations,
 {\em Nonlinear Analysis TMA},
 {\bf 158} (2017),  60-82. 
 
 
 
\bibitem{wan2}
B.  Wang,
 Dynamics of fractional stochastic reaction-diffusion equations on unbounded domains driven by nonlinear noise.
 {\em 
 Journal of Differential Equations},
 {\bf   268} (2019),   1-59.
 
 
  \bibitem{wanr1}
 R. Wang, T.  Caraballo and 
  N.  Tuan,
   Mean attractors
and invariant measures of locally monotone and generally coercive
SPDEs driven by superlinear noise, 
{\em Journal of Differential Equations}, 
{\bf 381} (2024),  209-259.
 
  \bibitem{zha1}
 X. Zhang,   On stochastic evolution equations with 
 non-Lipschitz coefficients,
 {\em  Stochastics and Dynamics},
 {\bf  9} (2009),  549-595.
  
    
 

 \end{thebibliography}
\end{document}